\title[Dynamical coherence and Shadowing Lemma]{Partially hyperbolic diffeomorphisms 
with a uniformly compact center foliation: the quotient dynamics}
\author[Bohnet]{Doris Bohnet}
\address{Universit\'e de Bourgogne, France}
\email{Doris.Bohnet@u-bourgogne.fr}
\thanks{This paper was partially supported by the \emph{Forschungsfond} 
of the Department of Mathematics and the 
\emph{K\"{o}rperschaftsverm\"{o}gen} from the Universit\"at Hamburg. 
It was further supported by the Universit\'e de Bourgogne, 
in particular the IMB, and the first author would like to thank the 
\emph{Institut de Math\'ematiques de Bourgogne} for its hospitality during her several visits. 
Moreover, the first author profits from a \emph{Bourse Postdoc} given by the \emph{R\'egion Bourgogne} 
which helped considerably to finish this article. }
\author[Bonatti]{Christian Bonatti}
\address{Universit\'e de Bourgogne, France}
\email{bonatti@u-bourgogne.fr}
\keywords{Partial hyperbolicity, center foliation, uniformly compact foliation, dynamical coherence, Shadowing Lemma.}
\subjclass[2010]{37D30, 37C15}
\date{\today}
\begin{document}

\begin{abstract}
We show that a partially hyperbolic $C^1$-diffeomorphism $f:M~\rightarrow~M$ with a
uniformly compact $f$-invariant center foliation $\mathcal{F}^c$ is dynamically coherent. 
Further, the induced homeomorphism $F:M/\mathcal{F}^c~\rightarrow~M/\mathcal{F}^c$ on the 
quotient space of the center foliation has the shadowing property, i.e. for every 
$\epsilon~>~0$ there exists $\delta~>~0$ such that every $\delta$-pseudo orbit of center leaves 
is $\epsilon$-shadowed by an orbit of center leaves. Although the shadowing orbit is not necessarily unique, 
we prove the density of periodic center leaves inside the chain recurrent set of the quotient dynamics. Some other interesting properties of 
the quotient dynamics are discussed.  
\end{abstract}
\maketitle 

\section{Introduction}
Among the chaotic dynamical systems, the hyperbolic diffeomorphisms are the one whose dynamics 
is now well understood. In particular, they are structurally stable: the nearby diffeomorphisms 
are conjugate, hence, they have the same topological dynamics. This leads to the hope that one
could classify their dynamics, up to conjugacy. The \emph{shadowing lemma} is at the same time a very useful tool 
for understanding the dynamics of hyperbolic systems (for instance for proving density 
of periodic orbits in the chain recurrent set) or for proving the structural stability: 
the orbits of a nearby system are pseudo orbits of the initial system, hence are shadowed
by true orbits. 

Here we consider partially hyperbolic systems: a $C^1$-diffeomorphism $f:M~\rightarrow~M$ on a
smooth compact manifold $M$ is called \emph{partially hyperbolic} if its tangent bundle splits 
into three non-trivial, $df$-invariant subbundles, called the stable, unstable and center bundle
$$TM=E^s\oplus E^c\oplus E^u,$$
such that $df$ contracts uniformly vectors in the stable direction, expands uniformly vectors in 
the unstable direction and contracts and/ or expands in a weaker way vectors in the center direction. 
As hyperbolicity, partial hyperbolicity is a $C^1$-robust property.
 
None of these bundles is \emph{a priori} differentiable. However, dynamical arguments ensure that the 
stable and unstable bundles integrate to unique $f$-invariant stable and unstable foliations. 
In contrast,  the center bundle might not be integrable, even in a weak sense (see \cite{W98,HHU10a}). 
If there is a foliation tangent to the center bundle, it is not known if 
it is  unique, and therefore it is not known if there exists an invariant center foliation. Finally if such an invariant center foliation exists, 
it may exhibit pathological features: for instance, it may not be absolutely continuous with respect to 
Lebesgue measure, even for analytic partially hyperbolic diffeomorphisms (see \cite{SW00,G12}).

However, if the center foliation exists and satisfies a condition called \emph{plaque expansivity}, 
\cite{HPS70} proved that it is \emph{$C^1$-structurally stable}: every diffeomorphism $g$  $C^1$-close 
to $f$ admits a center foliation $\cF^c_g$ conjugate to $\cF^c_f$ by a homeomorphism $h$ so that
$hgh^{-1}$  is a $C^0$-perturbation of $f$ along the center leaves. In other words, the dynamics 
\emph{transverse to the center bundle} is structurally stable. 
This leads to the hope of a classification of this transverse dynamics, up to topological 
conjugacy. 

\vskip 2mm
Here we consider a natural setting with an assumption which could seem very restrictive: 
we restrict ourselves to the 
class of partially hyperbolic diffeomorphisms where there is an invariant center foliation whose 
leaves are 
all compact. This class contains all the 
partially hyperbolic skew products over an Anosov diffeomorphism. Even assuming that every center 
leaf is compact, the center foliation may not be a fibration: there are examples of 
partially hyperbolic 
diffeomorphisms where the center foliation is a (generalized) Seifert bundle, so there are 
leaves with 
non-trivial holonomy (see \cite{BoW05}). Examples are partially hyperbolic automorphisms of the Heisenberg manifold. It is natural to ask if a center foliation 
by compact leaves  may be even more complicated. This question makes sense as Sullivan 
 builds an example of a circle foliation on a compact manifold for 
which the length of the leaves is not bounded (\cite{S76a}, see also \cite{EV78}). 
According to \cite{E76} this is equivalent 
to the fact that some leaf has a non finite holonomy group. 
Therefore, we  call \emph{uniformly compact} the 
foliations by compact leaves with finite holonomy, and we consider diffeomorphisms for which the 
center foliation is uniformly compact. It  is conjectured 
that every compact 
center foliation is uniformly compact, and there are  results 
by Gogolev \cite{G11} and Carrasco 
\cite{C11}   proving this conjecture under additional assumptions.

In the uniformly compact case \cite{E76} shows that the quotient space $M/\cF^c$ is a compact 
metric space with respect 
to the Hausdorff distance between the center leaves. The invariance of the center foliation $\cF^c$ means 
that the diffeomorphism $f$ passes in the quotient to a homeomorphism $F$. The aim of this paper is 
the study of this quotient dynamics. 

We show that the normal hyperbolicity of the center foliation in $M$ implies some kind of 
topological hyperbolic behavior for the quotient dynamics.  As a first step, we would like to project 
onto $M/\cF^c$ the stable and  unstable foliations of $f$. Such a projection requires some compatibility 
between the stable, the unstable and the center foliation, called \emph{dynamical coherence}: we need 
the existence of invariant foliations $\cF^{cs}$ and $\cF^{cu}$
tangent to $E^{cs}=E^s\oplus E^c$ and $E^{cu}=E^u\oplus E^c$, respectively, and subfoliated by $\cF^c$, 
and $\cF^s$ and $\cF^u$, respectively. It should be mentioned that there exist non-dynamically coherent 
partially hyperbolic systems, even on the $3$-torus (see \cite{HHU10a}). 
This is our first result: 

\begin{thm}[Dynamical coherence] Let $f$ be a partially hyperbolic diffeomorphism on a compact manifold 
admitting an invariant uniformly compact center foliation $\cF^c$. Then $(f,\cF^c)$ is dynamically 
coherent. 
\label{theorem_dc}
\end{thm}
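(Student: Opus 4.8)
The goal is to construct invariant foliations $\cF^{cs}$ and $\cF^{cu}$ tangent to $E^{cs}=E^s\oplus E^c$ and $E^{cu}=E^u\oplus E^c$, respectively. I will focus on $\cF^{cs}$; the construction of $\cF^{cu}$ is symmetric (replace $f$ by $f^{-1}$). The natural candidate for the center-stable leaf through a point $x$ is the set
\[
W^{cs}(x)=\bigcup_{y\in\cF^c(x)} W^s(y),
\]
the union of stable manifolds of all points on the compact center leaf through $x$ (here $W^s$ denotes the strong stable manifold of $f$). Dually, $W^{cu}(x)=\bigcup_{y\in\cF^c(x)}W^u(y)$. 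These sets are clearly $f$-invariant (because $\cF^c$, $\cF^s$, $\cF^u$ are), and they are visibly subfoliated by $\cF^c$ and by $\cF^s$ (resp. $\cF^u$). So the entire content is to show that this partition into sets $W^{cs}(x)$ is actually a \emph{foliation}: that each $W^{cs}(x)$ is an injectively immersed submanifold tangent to $E^{cs}$, and that locally it looks like a product $\cF^c\times\cF^s$ in a coherent way.

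**Key steps.**
First I would use uniform compactness to get, via the Reeb stability theorem / the Epstein structure of the quotient $M/\cF^c$, a local model for $\cF^c$ near a leaf $L=\cF^c(x)$: a neighborhood of $L$ is a (possibly non-trivial) Seifert-type fibration, so there is a finite holonomy group $G_L$ acting on a local transversal $T_x$, with $\cF^c$-plaques parametrized by $T_x/G_L$. Second, I would study how the strong stable foliation $\cF^s$ sits relative to this local model: the key point is \emph{uniform transversality} — $E^s$ is uniformly transverse to $E^{cu}\supset E^c$, so stable plaques are uniformly transverse to the center leaves, and by compactness of $L$ the set $\bigcup_{y\in L}W^s_{\mathrm{loc}}(y)$ is a genuine embedded submanifold (a tubular-neighborhood-type union of graphs over $L$), tangent to $E^{cs}$ along $L$. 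The real work is the third step: promoting this local picture along a single leaf to a global foliation chart, and in particular checking \emph{coherence} — that the local center-stable plaques of nearby leaves fit together, i.e. if $z\in W^{cs}_{\mathrm{loc}}(x)$ then $W^{cs}_{\mathrm{loc}}(z)$ agrees with $W^{cs}_{\mathrm{loc}}(x)$ near $z$. For this I would use the dynamics: a stable plaque through a point of $\cF^c(x)$ is contracted by $f^n$ into a smaller and smaller neighborhood of $\cF^c(x)$, so for any two points $x,z$ on the same candidate leaf, $f^n(x)$ and $f^n(z)$ eventually land in a common foliation box where the local product structure of $\cF^c$ with $\cF^s$ is unambiguous; pulling back by $f^{-n}$ and using that $f^{-n}$ maps center-stable plaques to center-stable plaques, one transfers coherence back. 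The holonomy of $\cF^c$ must be tracked carefully here, since a strong stable manifold of a point with nontrivial holonomy will wrap around, but finiteness of the holonomy group keeps everything controlled.

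**Main obstacle.** The delicate point — and I expect this to be where most of the technical effort goes — is the interaction between the \emph{holonomy of the center foliation} and the strong stable/unstable manifolds. One must show that $W^s(y)$ for $y$ in a center leaf $L$ with nontrivial finite holonomy group still assembles, over all of $L$, into an embedded submanifold rather than something that self-accumulates; equivalently, that the normal hyperbolicity forces the stable holonomy along $L$ to be compatible with the finite center holonomy. The assumption of uniform compactness (finite holonomy, hence bounded leaf volume, hence a uniform Reeb-stability neighborhood) is exactly what prevents the pathology of Sullivan's example and makes the union $\bigcup_{y\in L}W^s_{\mathrm{loc}}(y)$ uniformly nice in size; I would lean on the compactness of $M/\cF^c$ from \cite{E76} to extract uniform constants (uniform plaque size, uniform transversality angle, uniform local product structure) and then run the contraction argument above to globalize. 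Once $\cF^{cs}$ and $\cF^{cu}$ are established as foliations, their being subfoliated by $\cF^c$, $\cF^s$, $\cF^u$ is immediate from the construction, and $f$-invariance is automatic, completing dynamical coherence.
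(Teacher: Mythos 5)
Your candidate $W^{cs}(x)=\bigcup_{y\in\cF^c(x)}W^s(y)$ is the same as the paper's, and you correctly single out center holonomy as the main difficulty, but the mechanism you propose for the crucial coherence step does not work. Two problems. First, the phrase ``a common foliation box where the local product structure of $\cF^c$ with $\cF^s$ is unambiguous'' assumes exactly what must be proved: $E^c$ and $E^s$ are merely continuous transverse bundles, and there is no a priori foliation box in which center plaques and stable plaques jointly integrate --- that local joint integrability \emph{is} dynamical coherence, and it genuinely fails for some partially hyperbolic systems. What can be established a priori is only a local product structure of the \emph{triple} $\cF^s,\cF^c,\cF^u$: the local unstable leaf of a nearby point meets $\bigcup_{z\in W^c_{\mathrm{loc}}(x)}W^s_{\mathrm{loc}}(z)$ in exactly one point, and even the uniqueness part of that statement requires a dynamical argument (Lemma~\ref{dc_lemma1}, \`a la Brin), not just transversality. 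Second, the step ``iterate forward until the leaves are close, then pull coherence back'' is precisely the intuitive argument the paper refutes: Proposition~\ref{p.example} and Corollary~\ref{c.example} exhibit, in this exact setting, center leaves $W_1,W_2$ with $d_H(f^n(W_1),f^n(W_2))\to 0$ as $n\to\infty$ but $W_2\not\subset W^s(W_1)$. So closeness of the iterated center leaves for all large $n$ does not imply the containment $W^c(z)\subset\bigcup_{w\in W^c(x)}W^s(w)$ that you need.

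The missing idea is a quantity that measures the failure of this containment and that the dynamics \emph{expands}. The paper defines, on local holonomy covers (where the lifted center foliation has trivial holonomy, so that the unstable projection of one lifted center leaf onto the local center-stable set of another is single-valued and continuous), an unstable projection distance $\Delta^u_{proj}$; it shows via the local isometry lemmas that this quantity is independent of the choice of cover and of lift, and proves $\Delta^u_{proj}(f(x),f(y))\ge\beta\,\Delta^u_{proj}(x,y)$ with $\beta>1$. Since the projection distance remains bounded along the forward orbit of two stably related points, it must vanish, which yields $W^c(y)\subset\bigcup_{z\in W^c(x)}W^s(z)$ and in fact the stronger completeness statement of Proposition~\ref{p.complete}, from which the foliation structure of the sets $W^{cs}(x)$ follows. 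Without some substitute for this expansion argument on holonomy covers, your plan cannot get past the counterexample above, and the acknowledgement that ``finiteness of the holonomy group keeps everything controlled'' remains an unproved assertion rather than a proof.
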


For proving Theorem~\ref{theorem_dc}, one shows that the center leaf  $W^c(y)$ through a point 
$y\in W^s(x)$  is contained in the union of stable leaves through the points 
$z\in W^c(x)$.  An intuitive idea is that, as $y\in W^s(x)$ and as the center foliation is 
uniformly compact, one easily deduces that the Hausdorff distance $d_H(f^n(W^c(x)), f^n(W^c(y)))$ 
tends to $0$ for $n \rightarrow \infty$, leading to the intuition that $W^c(y)$ is contained in 
the stable manifolds through $W^c(x)$.  However, this argument is not sufficient: we  exhibit in 
Proposition~\ref{p.example} a partially hyperbolic diffeomorphism with a uniformly compact invariant 
center foliation having two center leaves $W^c(x)$ and $W^c(y)$ so that the Hausdorff distance 
$d_H(W^c(x),W^c(y))$ tends to $0$ under positive iterates, but $W^c(y)$ is disjoint from 
$\bigcup_{z\in W^c(x)}W^s(z)$. The intuitive argument above is indeed correct if the center foliation is a locally 
constant fibration (that is, without holonomy); in the general case (with holonomy) we modify the distance 
by considering lifts of the leaves on holonomy covers. Such holonomy covers only exist locally, 
leading to many difficulties and
a somewhat technical notion of distance, which is one of the key points of this paper.

The dynamical coherence implies that the quotient space is endowed 
with the quotient of the center stable and center unstable foliations. We would like to
use the quotient of these foliations for proving hyperbolic properties of the quotient dynamics.
In particular, we aim to recover the \emph{shadowing property} (every pseudo orbit is shadowed by a true orbit)  
which is one of 
the main characteristics of hyperbolicity and an important tool for describing the topological dynamics: 
 for systems satisfying the shadowing property, the non-wandering 
set coincides with the chain recurrent set; therefore, one may apply Conley theory 
(dividing the chain recurrent set in chain recurrent classes separated by filtrations) 
to the non-wandering set.   Furthermore, in the case of hyperbolic systems, the system is \emph{expansive}: 
if the orbits of two points remain close for all time, the two points are equal; 
thus, the shadowing orbit is unique. This uniqueness leads to an even 
more precise description of the dynamics, e.g. the density of the periodic orbits in the non-wandering set.

In the hyperbolic case, the proof of the shadowing property uses strongly the stable and unstable foliations and
the fact that they form a local product structure. 
However, in our setting  the quotient on $M/\mathcal{F}^c$ of the center stable and center unstable foliations 
may not be foliations, as 
in \cite[Proposition 4.4]{BoW05}, and they may not induce a local product 
structure on the quotient. In particular, the quotient dynamics may not be expansive, 
as in the  example in \cite{BoW05}. This induces a difficulty for recovering the Shadowing Lemma: 
the shadowing orbit may not be unique, if it exists.  Nevertheless, we prove:\footnote{At the moment of submission we learnt that Kryzhevich and Tikhomirov announce in \cite{KT12} a Shadowing Lemma possibly related to ours. Their setting is for one side more general as they do not assume a compact 
foliation but on the other side they suppose a uniquely integrable center and 
a stronger version of partial hyperbolicity.}

\begin{thm}[Shadowing Lemma]
Let $f:M~\rightarrow~M$ be a partially hyperbolic $C^1$-diffeomorphism with 
a uniformly compact invariant center foliation $\mathcal{F}^c$. Then the induced
homeomorphism $F:M/\mathcal{F}^c~\rightarrow~M/\mathcal{F}^c$ has the shadowing property, i.e. for every 
$\epsilon~>~0$ there exists $\delta~>~0$ such that every $\delta$-pseudo orbit of center leaves 
is $\epsilon$-shadowed by an orbit of center leaves. 
\label{shadowing_lemma}
\end{thm}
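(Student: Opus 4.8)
The plan is to lift the shadowing problem from the quotient $M/\mathcal{F}^c$ back to the manifold $M$, using the fact (from dynamical coherence, Theorem~\ref{theorem_dc}) that $M$ carries invariant center-stable and center-unstable foliations $\mathcal{F}^{cs},\mathcal{F}^{cu}$ subfoliated by $\mathcal{F}^c$. A $\delta$-pseudo orbit of center leaves $(W^c(x_n))_{n\in\mathbb{Z}}$ in the quotient is, by definition, a sequence with $d_H\big(f(W^c(x_n)),W^c(x_{n+1})\big)<\delta$. I would like to say: since the center foliation is normally hyperbolic and (by Theorem~\ref{theorem_dc}) dynamically coherent, each leaf $W^c(x_n)$ has a well-defined local center-stable "plaque" and local center-unstable "plaque" in $M$, and the normal hyperbolicity gives a \emph{local product structure of the quotient leaf space} $(M/\mathcal{F}^c)$ by the projected foliations $\mathcal{F}^{cs}/\mathcal{F}^c$ and $\mathcal{F}^{cu}/\mathcal{F}^c$ — not globally (the paper warns these projections need not even be foliations), but on a scale controlled by $\epsilon$.

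\medskip

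The core construction is the classical "graph transform / successive approximation" argument adapted to the quotient. First I would fix $\epsilon>0$ and choose local sizes: a scale $\rho=\rho(\epsilon)$ on which, around every center leaf $L$, the sets $\mathcal{F}^{cs}_{\mathrm{loc}}(L)$ and $\mathcal{F}^{cu}_{\mathrm{loc}}(L)$ meet in exactly one center leaf (this is the local product structure coming from transversality of $E^{cs}$ and $E^{cu}$ along $E^c$, pushed to the quotient by the continuity and invariance established in proving dynamical coherence). Then choose $\delta\ll\rho$ so that $\delta$-jumps stay inside these product boxes, and so that the hyperbolic contraction/expansion rates $\lambda<1$ of $F$ transverse to the center — which exist because $f$ is normally hyperbolic to $\mathcal{F}^c$, hence $F$ contracts the $\mathcal{F}^{cs}/\mathcal{F}^c$ direction and $F^{-1}$ contracts the $\mathcal{F}^{cu}/\mathcal{F}^c$ direction — dominate the errors. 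Given the $\delta$-pseudo orbit, I build for each $n$ a "center-stable graph" $\Gamma^{cs}_n$ through $W^c(x_n)$ and a "center-unstable graph" $\Gamma^{cu}_n$, as limits of the iterated pieces $F^k$ of the pseudo-orbit data, exactly as in the Hirsch–Pugh–Shub / Bowen scheme: $\Gamma^{cs}_n$ records "the points whose forward pseudo-orbit stays $\epsilon$-close", $\Gamma^{cu}_n$ "the points whose backward pseudo-orbit stays $\epsilon$-close". Each of these is a Lipschitz graph over a local transversal by the standard cone-field estimate, using the domination. Their intersection $\Gamma^{cs}_n\cap\Gamma^{cu}_n$ is then a single center leaf $L_n=F$-orbit-shadowing $W^c(x_n)$ within $\epsilon$, and by construction $F(L_n)=L_{n+1}$, giving the genuine shadowing orbit of center leaves.

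\medskip

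The honest difficulty — and the reason the quotient is genuinely harder than the hyperbolic case — is exactly the point flagged in the introduction: $\mathcal{F}^{cs}/\mathcal{F}^c$ and $\mathcal{F}^{cu}/\mathcal{F}^c$ need \emph{not} be foliations of $M/\mathcal{F}^c$, because of center holonomy (cf. \cite{BoW05, BoW05} and Proposition~\ref{p.example}), so one cannot simply invoke a product-structure lemma for foliations. I expect the real work to be in showing that, despite this, the \emph{quantitative} local product structure survives: near any center leaf one can still lift to a finite holonomy cover (finiteness is precisely uniform compactness), on which the stable and unstable foliations \emph{do} produce a local product, and then push the resulting intersection point back down, checking it is holonomy-invariant and hence descends to a well-defined center leaf in the quotient. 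This is the same mechanism — working on local finite holonomy covers and controlling the descent — that the authors say is "one of the key points of this paper" in proving Theorem~\ref{theorem_dc}, so I would reuse that machinery: the metric on $M/\mathcal{F}^c$ adapted to holonomy covers, and the normal hyperbolicity estimates lifted to those covers. The contraction estimates and the graph-transform fixed point argument are then routine; the subtlety is purely that every local step must be performed upstairs on a holonomy cover and shown to be compatible with the covering, so that the shadowing leaf is well-defined and $F$-equivariant. A secondary technical point is the lack of expansiveness: uniqueness fails, so the fixed-point argument only yields \emph{existence} of the graphs' intersection, not uniqueness, and one must be careful to phrase the conclusion accordingly (consistent with the theorem statement, which does not claim uniqueness).
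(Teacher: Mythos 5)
Your overall strategy is the paper's own: reduce to the modified (holonomy-cover) metric, establish a quantitative local product structure for center leaves lifted to finite holonomy covers (existence and uniqueness of the intersection $W^u_{\mu_1}(\widetilde W_1)\cap W^s_{\mu_1}(\widetilde W_2)$, Lemmas~\ref{dc_lemma2a} and~\ref{l.proj}), and then run the classical successive-approximation shadowing argument using the contraction of the stable/unstable projection distances under iteration (Lemma~\ref{l.expansion}). Whether one phrases the limit object as the intersection of a ``forward'' and a ``backward'' graph, as you do, or as the limit of the corrected leaves $W_{0,i}\subset W^u(W_0)$ for a positive pseudo orbit followed by a compactness argument for bi-infinite ones, as the paper does, is a presentational difference; the estimates are the same.

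There is, however, one step in your plan that would fail as written: you propose to push the intersection leaf down from the holonomy cover after ``checking it is holonomy-invariant and hence descends to a well-defined center leaf.'' It is \emph{not} holonomy-invariant. Any leaf upstairs projects to a well-defined center leaf downstairs, so descent is never the issue; the issue is that the intersection leaf genuinely depends on \emph{which} lifts of $W^c(x_n)$ and of $f(W^c(x_{n-1}))$ you pair up, and different pairings produce different shadowing leaves. This is precisely the source of the non-uniqueness (Corollary~\ref{c.example}), and uniqueness is only recovered after enriching the pseudo orbit by the choice of lifts (Theorem~\ref{t.unique}). So instead of proving an invariance that is false, the proof must make \emph{one coherent choice} of lifts and propagate it equivariantly along the pseudo orbit: this is the content of the local-isometry Lemma~\ref{lemma_independence_cover}, Remark~\ref{uniquelift}, and the compatibility statement $f(z_{\tilde w})=z_{\tilde v}$ of Claim~\ref{c.projection}, and it is where most of the technical work of Section~\ref{sec:shadowing} actually lies. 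Your closing remark that uniqueness fails is correct, but it sits in tension with the invariance you ask for two sentences earlier; resolving that tension in the direction of ``consistent choice, not invariance'' is exactly the missing ingredient.
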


In Theorem~\ref{shadowing_lemma}, infinitely many orbits may shadow the same 
pseudo-orbit (see the example in Corollary~\ref{c.example}).
The non-uniqueness of the shadowing  makes that the shadowing property 
does not imply the existence of periodic orbits 
(here periodic center leaves). The lack of uniqueness comes from possible choices during the construction 
caused by the non-trivial holonomy. Theorem~\ref{t.unique} expresses that we recover the 
uniqueness of the shadowing orbit if we enrich the pseudo orbits by these possible choices.  Therefore, we get:

\begin{thm}\label{t.periodic}
 Let $f:M~\rightarrow~M$ be a partially hyperbolic $C^1$-diffeomorphism with 
a uniformly compact invariant center foliation $\mathcal{F}^c$. Then the periodic center leaves are dense 
in the chain recurrent set of the induced
homeomorphism \newline$F:M/\mathcal{F}^c~\rightarrow~M/\mathcal{F}^c$. In particular, the chain recurrent set 
coincides with the non-wandering set and with the closure of the set of periodic points.

\end{thm}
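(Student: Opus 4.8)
The plan is to deduce the density of periodic center leaves in the chain recurrent set from the Shadowing Lemma (Theorem~\ref{shadowing_lemma}), using the standard argument that shadowing plus chain recurrence forces the existence of nearby periodic pseudo-orbits, together with the refined uniqueness statement (Theorem~\ref{t.unique}) alluded to in the excerpt in order to close up the shadowing orbit into a genuine periodic one. Fix a center leaf $L$ in the chain recurrent set $\mathcal R(F)$ of the quotient homeomorphism $F$ on $X := M/\cF^c$, and fix $\epsilon>0$. Since $L$ is chain recurrent, for every $\delta>0$ there is a $\delta$-pseudo-orbit $L=L_0,L_1,\dots,L_N=L$ of center leaves returning to $L$. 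Concatenating this finite pseudo-orbit with itself periodically yields a bi-infinite \emph{periodic} $\delta$-pseudo-orbit $(\hat L_k)_{k\in\mathbb Z}$ of period $N$. Applying Theorem~\ref{shadowing_lemma} with the $\epsilon$ above produces $\delta>0$ and an orbit $(F^k(\ell))_k$ of center leaves that $\epsilon$-shadows $(\hat L_k)$; in particular $\ell$ lies within $\epsilon$ of $L$ in the Hausdorff metric on $X$. It remains to arrange that $\ell$ (or some leaf $\epsilon$-close to $L$) can be taken periodic.

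The core difficulty, and the reason Theorem~\ref{t.unique} is needed, is exactly the non-uniqueness of the shadowing orbit emphasized in the introduction: the orbit of $\ell$ shadowing the $N$-periodic pseudo-orbit need not itself be $N$-periodic, because $F^N(\ell)$ is merely another $\epsilon$-shadowing leaf of the same pseudo-orbit, not necessarily $\ell$ itself. The way around this is the ``enriched pseudo-orbit'' formalism: one records, along with the sequence of center leaves, the finite holonomy data / local choices made in the shadowing construction, so that an enriched pseudo-orbit determines its shadowing orbit uniquely (this is the content of Theorem~\ref{t.unique}). One then runs the shadowing construction over the periodic pseudo-orbit $(\hat L_k)$ in the enriched category: since the pseudo-orbit itself is $N$-periodic and there are only finitely many possible choices at each step (finiteness of holonomy being where uniform compactness enters), the shift by $N$ acts on the finite set of enriched lifts of $(\hat L_k)$; passing to a suitable iterate $mN$ one obtains a genuinely periodic enriched pseudo-orbit, whose unique shadowing orbit is then a periodic orbit of $F$, i.e. a periodic center leaf, lying within $\epsilon$ of $L$. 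This gives density of periodic center leaves in $\mathcal R(F)$.

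For the ``in particular'' clause: the inclusions $\overline{\mathrm{Per}(F)} \subseteq \Omega(F) \subseteq \mathcal R(F)$ hold for any homeomorphism of a compact metric space, so it suffices to prove $\mathcal R(F)\subseteq \overline{\mathrm{Per}(F)}$, which is precisely the density statement just established. Hence all three sets coincide.

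I expect the main obstacle to be the second step: making the enrichment bookkeeping precise enough that (a) the shift-by-$N$ map genuinely acts on a \emph{finite} set of enriched lifts of a periodic pseudo-orbit, and (b) Theorem~\ref{t.unique}'s uniqueness is strong enough to promote a periodic enriched pseudo-orbit to an honest periodic orbit of $F$. Both hinge on controlling how the local holonomy covers used in the proof of dynamical coherence and of Theorem~\ref{shadowing_lemma} patch together along a long orbit segment; the finiteness of the holonomy groups (uniform compactness) should keep the set of choices finite, but one must check that the $\epsilon$-constraint is preserved under passing to the iterate $mN$ and that no choices ``drift'' so as to destroy periodicity. Once the enriched periodic pseudo-orbit is in hand, extracting the periodic center leaf and verifying it is $\epsilon$-close to $L$ should be routine.
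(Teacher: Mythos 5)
Your proposal is correct and follows essentially the same route as the paper: reduce to shadowing a periodic $\varepsilon$-pseudo orbit arising from chain recurrence, and invoke the uniqueness statement of Theorem~\ref{t.unique} for a periodic choice of the enriched data (lifts and holonomy-cover indices) to force the unique shadowing center leaf to be periodic, the ``in particular'' clause then following from the standard inclusions. The only simplification the paper makes is to \emph{choose} the lifts periodically from the outset (the same choice at indices $i$ and $i+N$, which is possible since $W_{i+N}=W_i$), so your pigeonhole passage to an iterate $mN$ and the attendant worry about the $\varepsilon$-constraint drifting are unnecessary.
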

Theorem~\ref{t.periodic} implies that, if the chain recurrent set (or non-wandering set) 
of the quotient dynamics $F$ 
is the whole $M/\mathcal{F}^c$, then $F$ is transitive. This is an important tool in \cite{B12} 
for classifying codimension $1$ partially hyperbolic diffeomorphisms with a uniformly compact center 
foliation.

In the case of hyperbolic dynamics, the expansivity is a key ingredient  for the structural stability. In the case of 
partially hyperbolic system with center foliation, this has been adapted in \cite{HPS70} to the notion of 
\emph{plaque expansivity}: the center foliation is plaque expansive if every 
two pseudo-orbits respecting the center foliation (i.e. with jumps in the local center leaves) remain close for 
all time, then they lie in the same  center leaf. As said before, \cite{HPS70} proves the 
structural stability of the center foliation for partially hyperbolic systems with plaque 
expansive center foliation.

Even, if the quotient dynamics is not expansive, the proof of the Shadowing Lemma uses some 
expansiveness property, which implies in particular the plaque expansivity: 

\begin{thm}[Plaque expansivity]
Let $f:M~\rightarrow~M$ be a partially hyperbolic $C^1$-diffeomorphism with a uniformly 
compact invariant center foliation $\mathcal{F}^c$. Then $(f,\cF^c)$ is plaque expansive. 
\label{plaquex}
\end{thm}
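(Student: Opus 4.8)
The plan is to derive plaque expansivity from the normal hyperbolicity together with the dynamical coherence of Theorem~\ref{theorem_dc}, reducing it to an ``expansiveness in the stable and in the unstable directions'' of the quotient dynamics --- which is exactly the property that the proof of Theorem~\ref{shadowing_lemma} also uses. Fix $\epsilon_0>0$ small enough that the local product structures of $(\cF^s,\cF^{cu})$ and of $(\cF^u,\cF^{cs})$ are defined: any $x,y\in M$ with $d(x,y)<\epsilon_0$ have a unique point $W^{cu}_{loc}(x)\cap W^s_{loc}(y)$ and a unique point $W^{cs}_{loc}(x)\cap W^u_{loc}(y)$. Let $(x_n)$ and $(y_n)$ be two bi-infinite pseudo-orbits respecting $\cF^c$ (so that $f(x_n)$ and $x_{n+1}$, and likewise $f(y_n)$ and $y_{n+1}$, always lie in a common local center plaque), with $d(x_n,y_n)<\epsilon$ for all $n$, where $\epsilon\le\epsilon_0$ is fixed at the end. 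We must show that $x_n$ and $y_n$ lie in a common local center plaque for every $n$. Since jumps inside local center plaques become trivial in $M/\cF^c$, the sequences $(\pi(x_n))$ and $(\pi(y_n))$ are genuine $F$-orbits; but $F$ need not be expansive (the example of \cite{BoW05}), so one must genuinely use the distances in $M$ and not merely their projections.

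Next I would peel off the stable and the unstable parts of the difference. Set $z_n:=W^{cu}_{loc}(x_n)\cap W^s_{loc}(y_n)$, which lies within $C\epsilon$ of both $x_n$ and $y_n$ for some uniform $C$. Using $f\big(W^{cu}_{loc}(x)\big)\subset W^{cu}_{loc}(f(x))$ and $f\big(W^s_{loc}(x)\big)\subset W^s_{loc}(f(x))$ (after a mild shrinking), the $f$-invariance of $\cF^{cu}$ and $\cF^s$, and the plaque-respecting property of $(x_n)$ and $(y_n)$, one checks that $(z_n)$ is again a pseudo-orbit respecting $\cF^c$, stays $C\epsilon$-close to $(x_n)$, and satisfies $z_n\in W^{cu}_{loc}(x_n)$ and $y_n\in W^s_{loc}(z_n)$. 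It therefore suffices to establish: \emph{(i)} if $(x_n),(z_n)$ are pseudo-orbits respecting $\cF^c$ with $z_n\in W^{cu}_{loc}(x_n)$ and $d(x_n,z_n)$ uniformly small, then $z_n\in W^c_{loc}(x_n)$ for all $n$; and \emph{(ii)} the statement obtained from \emph{(i)} by exchanging $\cF^{cu}$ with $\cF^{cs}$ --- equivalently, \emph{(i)} applied to $f^{-1}$. Indeed, \emph{(i)} gives $z_n\in W^c_{loc}(x_n)$, hence $y_n\in W^s_{loc}(z_n)\subset W^{cs}_{loc}(x_n)$, and then \emph{(ii)} applied to the original pair $(x_n),(y_n)$ forces $y_n\in W^c_{loc}(x_n)$.

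The heart of the matter is \emph{(i)}, and the main obstacle is the possible holonomy of $\cF^c$ inside $\cF^{cu}$: it prevents the leaf space of $\cF^{cu}$ from being a manifold, so that ``the unstable direction expands'' is not literally meaningful. This is where uniform compactness enters. By \cite{E76} and compactness of $M$, the holonomy groups of $\cF^c$ are finite of uniformly bounded order, so each leaf $W^c(x_n)$ has a saturated neighborhood $U_n$ carrying a finite holonomy cover $\widehat U_n$ on which the lifted center foliation is a trivial fibration; its leaf space $\widehat\Sigma_n$ is then a genuine manifold, endowed with the local product structure coming from the lifts of $\cF^{cu}$ and $\cF^{cs}$, and $f$ induces homeomorphisms $g_n\colon\widehat\Sigma_n\to\widehat\Sigma_{n+1}$ between uniform-size neighborhoods of the base points which, by normal hyperbolicity, are uniformly expanding along the unstable plaques and uniformly contracting along the stable ones. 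Lifting the two orbits $(x_n)$ and $(z_n)$ coherently to the covers $\widehat U_n$ --- possible because the relevant points stay in a set of small diameter and the deck groups are finite --- and projecting to the $\widehat\Sigma_n$, one obtains genuine orbits $\xi_n$ (from $x_n$) and $\zeta_n$ (from $z_n$) of the $g_n$ that remain within $C'\epsilon$ of one another, with $\zeta_n$ lying on the unstable plaque through $\xi_n$. Choosing $\epsilon$ below the uniform expansivity scale of the family $(g_n)$, iterating backward (along $n\to-\infty$) the contracting unstable inverse branches forces $\xi_m=\zeta_m$ for every $m$; downstairs this says exactly that $z_n$ and $x_n$ lie in a common center plaque, which is \emph{(i)}.

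Combining \emph{(i)} and \emph{(ii)} as in the second paragraph then gives that $x_n$ and $y_n$ lie in a common center plaque for every $n$, i.e.\ $(f,\cF^c)$ is plaque expansive; note that \emph{(i)} and \emph{(ii)} together constitute the ``expansiveness in the stable and unstable directions'' invoked in the introduction. The real difficulty is organizational rather than dynamical: the resolving covers $\widehat U_n$ exist only locally and with a priori varying degrees, so one must extract from uniform compactness genuinely uniform constants (a uniform expansivity scale for the $g_n$, uniform comparison of distances in $M$, in the covers and in the leaf spaces, uniform size of the domains of the $g_n$), and one must make the lifts of both pseudo-orbits coherent along the entire bi-infinite orbit. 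This is precisely where the holonomy-adapted distance developed earlier in the paper is needed.
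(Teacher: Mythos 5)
Your overall architecture is sound and its first half coincides with the paper's: both proofs project one pseudo-orbit onto the center-(un)stable set of the other (your $z_n=W^{cu}_{loc}(x_n)\cap W^s_{loc}(y_n)$ is exactly the paper's $w_n$ from Lemma~\ref{l.pseudoproj}, and the verification that it is again a plaque-respecting pseudo-orbit is the same dynamical-coherence argument), thereby reducing plaque expansivity to a one-sided ``expansivity along the unstable (resp.\ stable) direction.'' Where you genuinely diverge is in how that one-sided statement is proved. You pass to holonomy covers, view the lifted leaf spaces $\widehat\Sigma_n$ as manifolds with a uniformly hyperbolic family of induced maps $g_n$, and run the classical expansion argument there; this is viable, but everything it needs --- coherent choices of lifts along a bi-infinite orbit, uniform expansion of the unstable displacement in the cover, comparability of the metrics --- is precisely the content of Lemma~\ref{l.Delta-expansion}, Lemma~\ref{l.expansion} and Remark~\ref{uniquelift}, so you are in effect re-deriving Corollary~\ref{c.Delta-expansion}, and the ``organizational'' work you defer is the bulk of the proof. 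The paper instead proves Proposition~\ref{p.plaquex} with no holonomy covers at all: since the pseudo-orbits respect center plaques, the backward iterates $f^{-n}(x_n)$ and $f^{-n}(z_n)$ stay on the \emph{compact} leaves $W^c(x_0)$ and $W^c(z_0)$ while their unstable separation contracts, forcing the two leaves to coincide; compactness and transversality of that single leaf to $\cF^u$ then give a uniform $\eta$ with $W^u_\eta(x)\cap W^c(x_0)=\{x\}$, hence $x_n=z_n$ for large $n$, and a backward induction (local unstable meets local center in one point) propagates the equality to all $n$. The paper's route is more elementary and, importantly, separates ``same leaf'' from ``same plaque'' explicitly (see the remark after Lemma~\ref{l.plaquex2}); in your route the analogous point is that $\xi_m=\zeta_m$ in $\widehat\Sigma_m$ only yields ``same lifted leaf,'' and you must invoke trivial holonomy of the lifted foliation plus closeness to upgrade this to ``same plaque'' downstairs. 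Two small corrections to record if you carry your version out: the unstable displacement is controlled by pulling back the uniform bound at times $n\to+\infty$ (not $-\infty$) through the contracting inverse branches, and your statement (ii) must be applied to the pair $(x_n,y_n)$ only \emph{after} (i) has placed $y_n$ in $W^{cs}_{loc}(x_n)$, as you indeed arrange.
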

\begin{corol}[Structural stability] Let $f:M~\rightarrow~M$ be a partially hyperbolic $C^1$-diffeomorphism with a 
uniformly compact invariant center foliation $\mathcal{F}^c$. Then $(f,\cF^c)$ is $C^1$-structurally 
stable (in the sense of \cite{HPS70}).
\end{corol}
In particular, the regularity of the diffeomorphism $f$ does not interfere with their topological 
classification.
\begin{rem}
Theorem~\ref{theorem_dc} and Theorem~\ref{shadowing_lemma} have already appeared in \cite{B11}, but were not properly stressed as important results by themselves. Here they are embedded in a diligent and more complete study of the quotient dynamics. 
\end{rem}
\textbf{This work is organized as follows:}
In Section~\ref{s.background} we recall the relevant notions and implications from foliation theory 
with respect to uniformly compact foliations, and we develop in Section~\ref{s:covers} a concept of 
holonomy cover which suits our purposes. We prove dynamical coherence (Theorem~\ref{theorem_dc}) in 
Section~\ref{sec:coherence} in several steps, including the proof of the existence of a well-defined 
notion of unstable projection of a center leaf. All these preparatory steps help to prove the 
Shadowing Lemma (Theorem~\ref{shadowing_lemma}) in Section~\ref{sec:shadowing} quite directly, 
following the classical proof. The subsequent Section~\ref{sec:plaque} is reserved for the proof 
of the plaque expansivity (Theorem~\ref{plaquex}), followed by the very direct implication, 
Theorem~\ref{theorem_noncompact} (Section~\ref{sec:noncompact}), of non-compactness of 
center-stable and center-unstable foliations given a uniformly compact center foliation. 
We end this article with a short discussion in Secion~\ref{sec:comments} of an example (presented in \cite{BoW05}) which stresses the important differences of the 
quotient dynamics to the classical hyperbolic behavior.  

\renewcommand{\abstractname}{Acknowledgements}
\begin{abstract}
 We would like to thank the referee for his careful reading and his numerous 
comments which helped us to
improve the presentation of this article.
\end{abstract}

\newpage
\section{Preliminaries from foliation theory}\label{s.background}

This section recalls the basic concepts of foliation theory used in this article.

A  \emph{$C^{1,0+}$-foliation} is a foliation on a manifold $M$ for which the leaves are $C^1$-immersed 
submanifolds and the solution of a distribution defined by a continuous subbundle of $TM$. 
Throughout this article the word foliation refers to \emph{$C^{1,0+}$-foliation}.

\subsection{Holonomy}

We shortly define the notion of holonomy for a leaf of a foliation. 
For a more complete definition we refer the reader to the books 
by Candel and Conlon (\cite{CC00}) and by Moerdijk and Mrcun (\cite{MM03}). See Figure~\ref{fig:holonomy}. \\
Consider a  foliation $\cF$ on a manifold $M$ and a closed path $\gamma:[0,1]~\rightarrow~L$ with $\gamma(0)=\gamma(1)=x \in L$ which lies entirely 
inside one leaf $L \in \cF$.
We define a homeomorphism $H_{\gamma}$ on a smooth disk $T$ of dimension $q=\codim \mathcal{F}$ transversely 
embedded to the foliation $\mathcal{F}$ at $x$ which fixes $x$ and maps intersection points of a nearby leaf $L'$ 
onto each other following the path $\gamma$. We call it the \textit{holonomy homeomorphism along the path $\gamma$}. The definition of $H_{\gamma}$ (more precisely of the class of germs of $H_{\gamma}$) only depends on the homotopy class $[\gamma]$ of $\gamma$. Hence, we obtain a group homomorphism 
$$
\pi_1\left(L,x\right) \rightarrow \Homeo\left(\mathbb{R}^q,0\right),
$$
where $\Homeo(\mathbb{R}^q,0)$ denotes the classes of germs of homeomorphisms 
$\mathbb{R}^q \rightarrow \mathbb{R}^q$ which fix the origin.
The image of this group homomorphism is called the \textit{holonomy group} of the leaf 
$L$ and denoted by $\Hol(L,x)$. 
By taking the isomorphism class of this group it does not depend on the original embedded disk
$T$ in $M$. It is easily seen that any simply connected leaf has a trivial holonomy group. We say that a 
leaf has \textit{finite holonomy} if the holonomy group $\Hol(L,x)$ for any $x \in L$ is a finite group, 
whose order is denoted by $|\Hol(L,x)|$. 
A foliation is  \emph{uniformly compact} if every leaf is compact and has finite holonomy.
This is the main object we consider in the following.

A subset of the manifold  is called \emph{$\cF$-saturated}   if it is a union of leaves.
\begin{figure}
\includegraphics[width=0.5\textwidth]{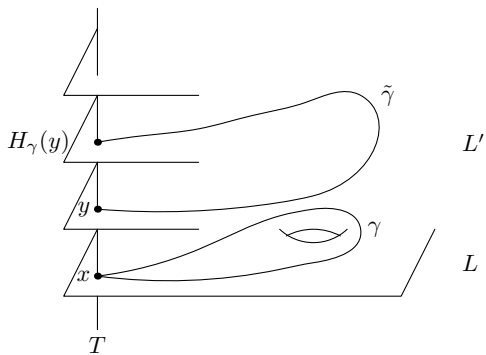}
\caption{Construction of a holonomy homeomorphism $H_{\gamma}: T \rightarrow T$: It fixes $x=\gamma(0)=\gamma(1)$ and maps $y=\tilde{\gamma}(0)$ onto $\tilde{\gamma}(1)$ where $\tilde{\gamma}$ denotes the lift of $\gamma$ to the nearby leaf $L'$.}
\label{fig:holonomy}
\end{figure}


\subsection{Uniformly compact foliations.}    
A classical result (proved in \cite{H77} and \cite{EMT77}) states that leaves with trivial holonomy groups 
are generic. In the case of a foliation whose leaves are all compact this implies that there 
exists an open and dense set of leaves with trivial holonomy.  
The first reason why we assume a uniformly compact foliation are the following equivalences: 
\begin{otherthm}[Epstein (\cite{E76})]
Let $\cF$ be a foliation with all leaves compact of a manifold $M$. Then the following conditions are equivalent: 
\begin{itemize}
 \item The quotient map $\pi:M~\rightarrow~M/\mathcal{F}$ is closed. 
\item Each leaf has arbitrarily small saturated neighborhoods. 
\item The leaf space $M/\mathcal{F}$ is Hausdorff. 
\item If $K~\subset~M$ is compact then the saturation $\pi^{-1}\pi K$ of $K$ is compact, this means, the set of leaves meeting a compact set is compact.
 \item The holonomy group of every leaf is finite.  
\end{itemize}
\label{theorem_epstein}
\end{otherthm}
\begin{rem}Furthermore, the quotient topology is generated by the Hausdorff metric $d_H$ between center leaves in $M$ and hence, $M/\mathcal{F}^c$ is a compact metric space. In the case of a compact foliation with trivial holonomy the resulting leaf space is a topological manifold. 
\end{rem}

\textbf{Notation:} For every leaf $L \in \mathcal{F}$ and $\delta~>~0$, we denote by $B_H(L,\delta)$ the $\mathcal{F}$-saturated open ball of Hausdorff radius  $\delta$ around $L$ :
$$B_H(L,\delta):=\left\{L' \in \mathcal{F}\;\big|\; d_H(L,L') < \delta\right\}.$$

According to Theorem \ref{theorem_epstein}, every leaf admits a basis of  
saturated neighborhoods. As a consequence of this and of the compactness of the leaf space, one gets: 
\begin{corol}
Let $\mathcal{F}$ be a uniformly compact foliation of a compact manifold $M$. Then for given $\alpha~>~0$ there exists $\epsilon~>~0$ such that for every $x~\in~M$ the $\mathcal{F}$-saturate $\mathcal{F}\left(B\left(x,\epsilon\right)\right)=\bigcup_{y \in B(x,\epsilon)}L(y)$ of an $\epsilon$-ball $B(x,\epsilon)$ is contained in an $\mathcal{F}$-saturated Hausdorff neighborhood $B_H(L(x),\alpha)$ of the leaf $L(x)$, i.e.  
$$\mathcal{F}\left(B\left(x,\epsilon\right)\right) \subset B_H(L(x),\alpha).$$
\label{lemma_neighborhoods}
\end{corol}

\subsection{Neighborhood of a compact leaf}

An important property of uniformly compact foliations is that there exist small saturated 
neighborhoods $V(L)$ of any leaf $L$ that are foliated bundles $p:V(L)~\rightarrow~L$ and the holonomy group is actually a group. This fact is the content of the Reeb Stability Theorem which is proved in the present form in \cite[Theorem 2.4.3]{CC00}:    
\begin{otherthm}[Generalized Reeb Stability]
Let $L$ be a compact leaf of a foliation $\cF$ of a manifold $M$. 
If its holonomy group $\Hol(L)$ is finite, then there is a normal neighborhood 
$p:V~\rightarrow~L$ of $L$ in $M$ such that $\left(V, \mathcal{F}|_V,p\right)$ is a fiber bundle with 
finite structure group $\Hol(L)$. 

Furthermore, each leaf $L'|_V$ is a covering space $p|_{L'}: L'~\rightarrow~L$ with $k~\leq~\left|\Hol(L,y)\right|$ sheets and the leaf $L'$ has a finite holonomy group of order $\frac{\left|\Hol(L,y)\right|}{k}$. 
\label{thm1}
\end{otherthm}

\subsection{Transverse disks fields}\label{s.disksfield}

Let $\cF$ be a foliation of a compact manifold $M$. We denote by $N_\cF$ its normal bundle: 
in an abstract way one can see it as the quotient of the tangent bundle $TM$ by the tangent bundle of 
$\cF$.  More concretely, if $M$ is endowed with a Riemannian metric, one could  realize the normal bundle as the 
orthogonal bundle $T\cF^{\perp}$ to the foliation. However, we deal here with non-smooth foliations so that 
the orthogonal bundle is not smooth.  
For this reason, we denote by $N_\cF$ a smooth subbundle of $TM$ 
transverse to the tangent bundle $T\cF$ of the foliation and close to the orthogonal bundle. 

For any $\rho>0$ we denote by $N_{\cF,\rho}$ the $\rho$-neighborhood of the zero section of 
$N_\cF$, that is, the set of vectors in $N_\cF$ with norm less or equal to $\rho$. 

For $\rho>0$ small enough, the exponential $\theta$ (associated to the Riemannian metric) 
defines a submersion from $N_{\cF,\rho}$ to $M$ and the restriction to each disk 
$$N_{\cF,\rho}(x)=\{u\in N_{\cF}(x), \|u\|\leq \rho\}$$ 
is an embedding which is transverse to $\cF$.  We denote   $$D_{x,\rho}=\theta(N_{\cF,\rho}(x)).$$
The family $\cD= \{D_{x,\rho}\}$ is a family of disks centered at the reference point $x$, 
transverse to $\cF$ and varying continuously with $x$ in the $C^1$ topology. 
Such a family is called a \emph{transverse disks field} . Using the compactness of the manifold, 
 the continuity of the family of disks, and the tranversality, one deduces (see for instance \cite[p.64, section 2.2.a]{Bo} 
where such transverse disks fields are defined and  used in the same way): 

\begin{lemma}\label{l.diskfield} There is $\rho>0$ so that
\begin{itemize}
\item for every $x~\neq~y$ in the same leaf $L$ of $\cF$ and with $d_\cF(x,y)~<~\rho$ we have \newline$D_{x,\rho}\cap D_{y,\rho}~=~\emptyset$;
\item for every $x, z~\in~M$, the intersection of $D_{x,\rho}\cap B_\cF(z,\rho)$ contains at most $1$ point, where  $B_\cF(z,\rho)$ denotes the ball of radius $\rho$ centered at $z$ inside the leaf of $z$.
\end{itemize}
\end{lemma}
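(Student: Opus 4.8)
The plan is to choose $\rho$ small enough using uniform continuity and transversality, exploiting the compactness of $M$. For the first item, I would start from the observation that the map $x \mapsto D_{x,\rho}$ varies continuously in the $C^1$ topology, so in particular each tangent space $T_x D_{x,\rho} = N_\cF(x)$ is uniformly transverse to $T\cF$: there is a uniform angle bound $\alpha_0 > 0$ between $N_\cF$ and $T\cF$ at every point. Now suppose $x \neq y$ lie in the same leaf $L$ with $d_\cF(x,y) < \rho$. Since $N_\cF$ is a smooth subbundle transverse to the (continuous) bundle $T\cF$, for $\rho$ small the disk $D_{x,\rho}$ is a $C^1$-graph over $N_\cF(x)$ meeting $L$ only at $x$ in a neighborhood of size comparable to $\rho$; the same holds for $D_{y,\rho}$ at $y$. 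If $D_{x,\rho} \cap D_{y,\rho}$ contained a point $w$, then both $x$ and $y$ would lie in the leaf $L$ at leaf-distance $O(\rho)$ from $w$ (the disks have extrinsic diameter $O(\rho)$, and transversality controls how far inside $L$ one must travel), while $w$ lies at distance $O(\rho)$ from both disks; a second-order/graph estimate forces $x = y$, a contradiction. More cleanly, one picks $\rho$ so that each $D_{x,\rho}$ is contained in a flow-box (foliation chart) in which the plaque of $x$ is the only plaque it meets, and then two such disks centered at distinct points of the same plaque are disjoint. This is exactly the kind of statement established in \cite[p.64, section 2.2.a]{Bo}, so I would cite it and only sketch the compactness argument.

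For the second item, I argue similarly: the disk $D_{x,\rho}$ is transverse to $\cF$ with a uniform angle, and $B_\cF(z,\rho)$ is a piece of a leaf of radius $\rho$. If $D_{x,\rho}$ met $B_\cF(z,\rho)$ in two distinct points $p \neq q$, then the segment of leaf inside $B_\cF(z,\rho)$ joining $p$ to $q$ (of leaf-length $< 2\rho$) together with the piece of the disk joining $p$ to $q$ would form a configuration violating the uniform transversality once $\rho$ is small: in a foliation chart covering $B(z, C\rho)$ for a suitable constant $C$, the disk $D_{x,\rho}$ is a section meeting each plaque at most once because it is a graph in the transverse direction, while $B_\cF(z,\rho)$ sits inside a single plaque. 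Hence at most one intersection point. Again, the clean way is to fix a finite cover of $M$ by foliation charts, let $\rho$ be a Lebesgue number for this cover (shrunk so that every $\rho$-ball lies in a chart and every $D_{x,\rho}$ lies in a chart), and observe that inside a chart a $C^1$ transverse disk close to the normal direction is a graph over the transverse coordinate, so it hits each plaque at most once.

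The main obstacle is purely quantitative: making precise the passage from "uniform transversality angle $\alpha_0$" to "the disks $D_{x,\rho}$ are genuine graphs over the normal direction inside a foliation chart, hence meet each plaque at most once" when the foliation is only $C^{1,0+}$ (leaves $C^1$, transversely merely continuous). One must be careful that the foliation charts can be taken so that the plaques are $C^1$-graphs over a fixed plane depending continuously on the chart, and that $N_\cF$ — being a \emph{smooth} subbundle chosen close to the orthogonal bundle — is uniformly transverse to these plaques. Once this setup is fixed, both items reduce to the elementary fact that a $C^1$ graph over the vertical direction meets each horizontal plane at most once, and distinct such graphs based at distinct horizontal points in the same plaque are disjoint for small radius. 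I would therefore spend most of the write-up fixing the finite atlas of foliation charts and the Lebesgue-number choice of $\rho$, and dispatch the two bullet points in a line each, referring to \cite{Bo} for the analogous construction.
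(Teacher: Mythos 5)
The paper gives no proof of this lemma at all---it simply asserts that it follows from compactness of $M$, the continuity of the disk family, and transversality, and points to \cite[p.64, section 2.2.a]{Bo}; your sketch (finite atlas of foliation charts, Lebesgue-number choice of $\rho$, uniform transversality of the smooth bundle $N_\cF$ to $T\cF$) fills in exactly that standard argument and is correct in outline. The one step I would tighten is the first bullet: the disjointness of $D_{x,\rho}$ and $D_{y,\rho}$ for $x\neq y$ in the same plaque is not a consequence of each disk separately meeting each plaque at most once, but rather of the uniform injectivity of $\theta$ restricted to $N_{\cF,\rho}$ over a plaque (a tubular-neighborhood/inverse-function-theorem statement, uniform by compactness), since two nearly-vertical graphs through distinct but very close base points could a priori still cross unless one controls $\theta$ as a map of the total space.
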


\begin{defn} Let $L$ be a leaf of $\cF$ and $\gamma\colon[0,1]~\to~L$ a path on the leaf.  Denote $x~=~\gamma(0)$.  A path $\sigma$ is a projection of $\gamma$ along $\cD$ on a leaf of $\cF$ if 
\begin{itemize}
\item $\sigma $ is contained in a leaf of $\cF$ and
\item $\sigma(t)~\in~D_{\gamma(t),\rho}$ for every $t$. 
\end{itemize}
\end{defn}

\begin{lemma}\label{l.projection} Given a path $\gamma\colon[0,1]~\to~L$ inside a leaf $L$ of $\cF$ and $x~=~\gamma(0)$.  Then for every $y~\in~D_{x,\rho}$ there is at most one projection $\gamma_{y}$ of $\gamma$ on the leaf $L_y$, starting at $y$.  More precisely, the projection is unique and well defined until the distance  $d(\gamma_y(t),\gamma(t))$ becomes larger than $\rho$. 
\end{lemma}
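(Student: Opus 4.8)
The plan is to prove uniqueness by a connectedness argument, the essential input being the second item of Lemma~\ref{l.diskfield}: for any reference point $z$ and any point $w$, the disk $D_{z,\rho}$ meets the leaf-ball $B_{\cF}(w,\rho)$ in at most one point. This is nothing but the usual uniqueness of a holonomy lift, transcribed to the language of the transverse disks field $\cD$. Existence of the (partial) projection is not really the issue here — it is obtained by the standard leaf-following along a finite chain of foliation charts covering $\gamma([0,1])$, and Generalized Reeb Stability (Theorem~\ref{thm1}) gives a convenient product neighborhood of the compact leaf in which to perform it; the content of the lemma is that the outcome does not depend on the choices made, as long as one does not exit the disks $D_{\gamma(t),\rho}$.

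So suppose $\sigma_1$ and $\sigma_2$ are two projections of $\gamma$ starting at the same point $y\in D_{x,\rho}$; being paths on a leaf and containing $y$, they both take values in $L_y$. Consider
$$A=\bigl\{\,t\in[0,1]\ :\ \sigma_1(s)=\sigma_2(s)\ \text{ for all }\ s\in[0,t]\,\bigr\}.$$
It is a subinterval of $[0,1]$ containing $0$, and it is closed since $\sigma_1,\sigma_2$ are continuous. It remains to see that $A$ is open in $[0,1]$. Fix $t_0\in A$ and set $p=\sigma_1(t_0)=\sigma_2(t_0)\in L_y$. Since $\sigma_1,\sigma_2$ are continuous for the leaf topology of $L_y$, there is an interval $I\ni t_0$ with $\sigma_1(t),\sigma_2(t)\in B_{\cF}(p,\rho)$ for every $t\in I$; on the other hand $\sigma_i(t)\in D_{\gamma(t),\rho}$ by the very definition of a projection, and $t\mapsto D_{\gamma(t),\rho}$ varies continuously in the $C^1$-topology so that these disks stay transverse to $\cF$ near $p$. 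Applying the second item of Lemma~\ref{l.diskfield} with $z=p$ and reference point $\gamma(t)$, the set $D_{\gamma(t),\rho}\cap B_{\cF}(p,\rho)$ has at most one point, whence $\sigma_1(t)=\sigma_2(t)$ for all $t\in I$. Together with $t_0\in A$ this shows $I\cap[0,1]\subseteq A$, so $A$ is open, hence $A=[0,1]$ by connectedness, and $\sigma_1=\sigma_2$. The very same argument applies verbatim to projections defined only over a subinterval $[0,s]$, which is exactly the refined statement: the projection $\gamma_y$ is determined step by step and remains forced as long as it is a projection, i.e. as long as $\gamma_y(t)\in D_{\gamma(t),\rho}$; and since this inclusion automatically keeps $d(\gamma_y(t),\gamma(t))\le\rho$, the only way the construction can fail to continue past some time is that this distance reaches $\rho$ and the next point can no longer be guaranteed to lie in the transverse disk.

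The step I expect to be the real obstacle is the openness of $A$, and more precisely the claim that coincidence at $t_0$ forces coincidence on a neighborhood: this rests on the fact that a projection, being a path \emph{on a leaf}, stays within leaf-distance $\rho$ of $p$ for $t$ close to $t_0$ (so that Lemma~\ref{l.diskfield} can be invoked with leaf-ball $B_{\cF}(p,\rho)$), and here the transversality of the disks field $\cD$ and the local product picture of a neighborhood of the compact leaf are what make the argument go through. Once this local rigidity is available, everything else is the connectedness bookkeeping above.
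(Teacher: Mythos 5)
Your proof is correct, and it supplies exactly the argument the paper leaves implicit: the paper states Lemma~\ref{l.projection} without proof, treating it as a standard consequence of the transverse disks field of Lemma~\ref{l.diskfield} (with a pointer to \cite{Bo}), and your connectedness argument hinging on the second item of that lemma (at most one point in $D_{\gamma(t),\rho}\cap B_{\mathcal{F}}(p,\rho)$) is the standard way to make it precise. The only step you use silently is that a continuous path into $M$ whose image lies in a leaf is automatically continuous for the leaf topology, so that $\sigma_i(t)\in B_{\mathcal{F}}(p,\rho)$ for $t$ near $t_0$; this is a standard property of foliated manifolds and is harmless here.
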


\section{Holonomy covers}\label{s:covers}
From now on, we suppose that  $\cF$ is a uniformly compact foliation of a compact manifold $M$. 
As most proofs are quite intuitive for the case of trivial holonomy, our main tool are so-called \emph{holonomy covers} which eliminate (locally) the holonomy and whose existence is due to the Reeb Stability 
Theorem~\ref{thm1}. In these covers we have to define a suitable well-defined distance between lifted leaves which is equivalent to the canonical Hausdorff distance. 

\subsection{Holonomy covers}\label{sec:covers}

For every leaf $L$ of a foliation $\cF$ the holonomy covering map of $\cF$ is the covering map associated to the kernel of the holonomy group of $L$. 
If $\cF$ is uniformly compact, then the holonomy covering is a finite covering. Notice that this covering map extend to any tubular neighborhood of $L$ in $M$.
More precisely, we define the following: 
\begin{defn} A closed connected saturated set $V$ \emph{admits a holonomy covering} if there is a finite cover $p\colon\tilde V~\to~V$ such that the lift $\tilde \cF$ of $\cF$ on $\tilde V$ has trivial holonomy and if, for every leaf $L~\subset~V$, it induces the holonomy covering by restriction to every connected component of $p^{-1}(L)$. 
\end{defn}

\begin{defn}
We say that a finite cover $\cV=\{V_i,p_i\}$ of $M$ is a \emph{holonomy cover} for the foliation $\cF$  if
\begin{itemize}
\item every $V_i$ is a compact saturated set,
\item the interiors of the sets $V_i$ cover $M$, i.e. $M=\bigcup_i \interior(V_i)$, and
\item each $V_i$ admits a holonomy covering $p_i\colon\tilde V_i~\to~V_i$.
\end{itemize}
\end{defn}
The existence of a holonomy cover is given by the next Lemma~\ref{l.covering} which is a direct consequence of Theorem~\ref{thm1}: 
\begin{lemma}\label{l.covering} Let $\cF$ be a uniformly compact foliation and $L$ a leaf of $\cF$.  Then there exist a compact saturated neighborhood $V$ of $L$ and a cover $p\colon\tilde V~\to~V$ such that:
\begin{itemize}
\item the restriction of $p$ to $p^{-1}(L)$ is the holonomy covering map of $L$;
\item for every leaf $L'$ in $V$ and any connected component $\tilde L'$ of $p^{-1}(L')$, the restriction of $p$ to $\tilde L'$ is conjugate to the holonomy covering of $L'$. 
\end{itemize}
\end{lemma}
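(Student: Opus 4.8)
The plan is to derive the Lemma directly from the Generalized Reeb Stability Theorem~\ref{thm1} applied to the leaf $L$. First I would invoke Theorem~\ref{thm1}: since $\cF$ is uniformly compact, $L$ is a compact leaf with finite holonomy group $\Hol(L)$, so there is a normal neighborhood $p_0\colon V_0\to L$ which is a fiber bundle with finite structure group $\Hol(L)$ and with $\cF|_{V_0}$ the associated foliation. Concretely, $V_0$ is of the form $(\tilde L_{\mathrm{hol}}\times D^q)/\Hol(L)$, where $\tilde L_{\mathrm{hol}}\to L$ is the holonomy covering of $L$ (the cover associated to the kernel of $\pi_1(L,x)\to\Hol(L,x)$), $D^q$ is a small transverse disk, and $\Hol(L)$ acts diagonally — by deck transformations on $\tilde L_{\mathrm{hol}}$ and by the holonomy germs on $D^q$ (shrinking $D^q$ so the germs are represented by honest homeomorphisms). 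Then I would set $\tilde V := \tilde L_{\mathrm{hol}}\times D^q$ with the product foliation $\tilde\cF$ (leaves $\tilde L_{\mathrm{hol}}\times\{t\}$), which visibly has trivial holonomy, and let $p\colon\tilde V\to V_0$ be the quotient projection by the $\Hol(L)$-action. This $p$ is a finite covering of degree $|\Hol(L)|$.

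The second step is to check the two asserted properties. For the first: $p^{-1}(L) = \tilde L_{\mathrm{hol}}\times\{0\}$ (identifying the zero section), on which $p$ restricts to $\tilde L_{\mathrm{hol}}\to\tilde L_{\mathrm{hol}}/\Hol(L)=L$, which is by construction the holonomy covering map of $L$. For the second: a leaf $L'$ in $V_0$ corresponds to a $\Hol(L)$-orbit of fibers $\{t\}\subset D^q$; each connected component $\tilde L'$ of $p^{-1}(L')$ is $\tilde L_{\mathrm{hol}}\times\{t\}$ for one such $t$, and the stabilizer of $t$ in $\Hol(L)$ — call it $\Hol(L)_t$ — acts on $\tilde L_{\mathrm{hol}}\times\{t\}$, with quotient $L'$. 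So $p|_{\tilde L'}$ is the covering $\tilde L_{\mathrm{hol}}\to\tilde L_{\mathrm{hol}}/\Hol(L)_t$ of degree $|\Hol(L)|/|\Hol(L)_t|$; by the final sentence of Theorem~\ref{thm1} this is exactly the covering with $k\le|\Hol(L,y)|$ sheets whose image leaf $L'$ has holonomy group of order $|\Hol(L)|/k$, and one checks that $\tilde L_{\mathrm{hol}}\to L'$ kills precisely the holonomy of $L'$, i.e. it is conjugate to the holonomy covering of $L'$. Finally, since every leaf is compact, $L$ admits arbitrarily small compact saturated neighborhoods by Theorem~\ref{theorem_epstein}; intersecting $V_0$ with such a neighborhood (and restricting $p$) produces the required compact saturated neighborhood $V$ with its holonomy covering.

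I expect the only genuinely delicate point to be the identification in the second property: one must argue carefully that the component $\tilde L'$, which a priori is only a trivial-holonomy cover of $L'$ of the right degree, is in fact \emph{conjugate to the holonomy covering} of $L'$ rather than some larger cover. This follows because the holonomy of $L'$, as a leaf of $\cF|_{V_0}$, is realized by the sub-bundle structure inherited from $V_0$, so the deck group $\Hol(L)_t$ of $\tilde L'\to L'$ maps isomorphically onto $\Hol(L')$ via the holonomy representation — this is precisely the order computation $|\Hol(L')| = |\Hol(L)|/k$ recorded in Theorem~\ref{thm1}, together with the observation that $\tilde L'$ has trivial holonomy and hence its holonomy covering is itself. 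Everything else is a matter of unwinding the fiber-bundle-with-finite-structure-group description supplied by Reeb stability.
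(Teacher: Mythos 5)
Your proposal is correct and takes essentially the same approach as the paper, which gives no written proof beyond declaring the lemma ``a direct consequence of Theorem~\ref{thm1}'': you simply unwind the Reeb model $V_0\cong(\tilde L_{\mathrm{hol}}\times D^q)/\Hol(L)$, set $\tilde V=\tilde L_{\mathrm{hol}}\times D^q$ with the product foliation, and verify the two properties, correctly isolating the one delicate point (that a component of $p^{-1}(L')$ is the holonomy cover of $L'$ and not a larger trivial-holonomy cover, which follows from the exact order count $|\Hol(L')|=|\Hol(L,y)|/k$ in Theorem~\ref{thm1}). One bookkeeping slip worth fixing: the covering $\tilde L_{\mathrm{hol}}\to\tilde L_{\mathrm{hol}}/\Hol(L)_t$ has degree $|\Hol(L)_t|$ (the order of its deck group), whereas $|\Hol(L)|/|\Hol(L)_t|$ is the number $k$ of sheets of $L'\to L$; the substantive conclusion you draw from the order count is unaffected.
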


\begin{rem} 
\begin{itemize}  
\item Let $\tilde \cF$ be the lift of $\cF$ on the cover $\tilde V$ of $V$ in Lemma~\ref{l.covering}.  Then the foliation $\tilde \cF$ is uniformly compact and with trivial holonomy. 
\item Any finite cover $q\colon \hat V~\to~V$ such that the lift of $\cF$ on $\hat V$ has trivial holonomy,   is a finite cover of $\tilde V$, i.e. there is a finite cover $r\colon\hat V~\to~\tilde V$ such that $q=p\circ r$.
\end{itemize}
\end{rem}

\subsection{The metric and the holonomy covers}

We assume now that $M$ is endowed with a Riemannian metric,  $\cF$ is a uniformly compact foliation on $M$ and $\cV~=~\{V_i, p_i\}$ is a holonomy cover of $\cF$. 
For each $V_i$ we denote by $p_i\colon\tilde V_i~\to~V_i$ its holonomy covering.  We endow the interior of each $\tilde V_i$ with the Riemannian metric obtained by lifting the metric on $M$ by $p_i$. Therefore the projection $p_i$ is a local isometry if restricted to the interior of each $\tilde V_i$.

\begin{lemma}\label{l.diameter} Let $\tilde \cF$ denote the lift of the foliation $\cF$ on $\tilde V_i$. Then  the diameter (and the volume) of the leaves of $\tilde \cF$ for the lifted Riemannian metric is uniformly bounded independently from the holonomy cover.
\end{lemma}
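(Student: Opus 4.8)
The plan is to exploit the Generalized Reeb Stability Theorem (Theorem~\ref{thm1}) together with the compactness of $M$ to get a uniform bound on the cardinality of the holonomy groups, and then to transfer this bound to the lifted leaves via the local isometry property of $p_i$. More precisely, first I would observe that since $\cF$ is uniformly compact, every leaf $L$ of $\cF$ has finite holonomy, hence by Reeb Stability there is a saturated normal neighborhood $p\colon V(L)\to L$ which is a fiber bundle with structure group $\Hol(L)$, and every leaf $L'$ meeting $V(L)$ is a covering of $L$ with at most $|\Hol(L)|$ sheets. Covering $M$ by finitely many such neighborhoods $V(L_1),\dots,V(L_m)$ (possible since $M$ is compact), every leaf $L'$ of $\cF$ is contained in some $V(L_j)$ and is therefore a $k$-sheeted cover of $L_j$ with $k\le |\Hol(L_j)|\le N$, where $N:=\max_j |\Hol(L_j)|$. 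Consequently, for the lifted (Riemannian) volume, $\mathrm{vol}(L')\le N\cdot\max_j\mathrm{vol}(L_j)=:C_{\mathrm{vol}}$, a bound depending only on $\cF$ and the metric, not on the leaf.

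Next I would pass to the holonomy cover. Fix a holonomy cover $\cV=\{V_i,p_i\}$ and recall that $p_i\colon\tilde V_i\to V_i$ restricted to each connected component $\tilde L'$ of $p_i^{-1}(L')$ is conjugate to the holonomy covering of $L'$ (Lemma~\ref{l.covering}); in particular it is a finite covering of $L'$ of some degree $d(\tilde L')$, which is exactly $|\Hol(L')|/($deg of $L'\to L_j)$ by the second part of Theorem~\ref{thm1}, hence in any case $d(\tilde L')\le N$. Since $p_i$ is a local isometry on the interior of $\tilde V_i$, the lifted metric gives $\mathrm{vol}(\tilde L')=d(\tilde L')\cdot\mathrm{vol}(L')\le N\cdot C_{\mathrm{vol}}$. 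For the diameter bound I would combine the volume bound with the fact that $\tilde L'$ has \emph{trivial} holonomy and is a submanifold whose sectional geometry is inherited isometrically from $M$: the manifold $M$ being compact, there is a uniform lower bound $v_0>0$ on the volume of any metric ball of radius $\rho/2$ inside any leaf of $\cF$ (using Lemma~\ref{l.diskfield}'s $\rho$ or a standard Croke-type inequality for the induced intrinsic metric, whose curvature and injectivity data are bounded on $M$), and the same bound holds upstairs since $p_i$ is a local isometry. A chain of disjoint $(\rho/2)$-balls along a geodesic realizing the diameter of $\tilde L'$ then has total volume at least (number of balls)$\cdot v_0$, which is bounded by $\mathrm{vol}(\tilde L')\le N\cdot C_{\mathrm{vol}}$, forcing $\mathrm{diam}(\tilde L')\le \rho\cdot N\cdot C_{\mathrm{vol}}/v_0=:C_{\mathrm{diam}}$. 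Since none of $N$, $C_{\mathrm{vol}}$, $v_0$, $\rho$ depend on the choice of the holonomy cover (they are intrinsic to $(M,\cF)$ and the metric), this establishes the claim.

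The main obstacle I anticipate is making the diameter estimate genuinely uniform and independent of the holonomy cover: the volume bound is clean, but turning ``bounded volume'' into ``bounded diameter'' for the induced intrinsic metric on a leaf requires controlling the local geometry of leaves (a lower volume bound for small intrinsic balls), and one must be careful that the leaves are only $C^1$-immersed, not smooth, so classical isoperimetric/Croke inequalities do not apply verbatim. The cleanest way around this is to use the transverse disks field of Lemma~\ref{l.diskfield}: the disks $D_{x,\rho}$ are pairwise disjoint along a leaf at intrinsic distance $<\rho$, so a maximal $\rho$-separated set on $\tilde L'$ gives pairwise disjoint transverse plaques whose union sits in a fixed-size tubular neighborhood of $\tilde L'$ of bounded total volume; this bounds the cardinality of the separated set, hence the diameter, while only using the continuous transversality already available. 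I would also need to double-check that the structure-group bound $N$ is stable under refining the cover, which follows from the second remark after Lemma~\ref{l.covering} (any trivializing cover factors through $\tilde V_i$, so degrees only decrease).
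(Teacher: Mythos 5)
Your proof is correct in substance but takes a genuinely different route from the paper on the key point, the diameter bound. The paper's argument is shorter: it observes that $p_i\colon\tilde L\to L$ is a finite covering with at most $N=\max|\Hol|$ sheets which is moreover a \emph{local isometry} for the lifted metric, and directly concludes that $\operatorname{diam}(\tilde L)$ is bounded by $N\cdot\operatorname{diam}(L)$ (up to a universal factor coming from the standard covering-space chaining argument); uniform compactness then bounds $\operatorname{diam}(L)$ and $\operatorname{vol}(L)$ uniformly, exactly as in your first step. You instead bound $\operatorname{vol}(\tilde L)$ and convert volume into diameter by a packing argument. That conversion works, but the specific appeal to Lemma~\ref{l.diskfield} is misplaced: that lemma asserts $D_{x,\rho}\cap D_{y,\rho}=\emptyset$ for distinct points at intrinsic distance \emph{less than} $\rho$ (a local injectivity statement for the tubular neighborhood), whereas your maximal $\rho$-separated set consists of points at pairwise distance \emph{at least} $\rho$, for which the lemma says nothing — the leaf may return close to itself in $M$ and the transverse disks may then intersect. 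The repair is easy and does not need the disk field at all: take the intrinsic balls $B_{\cF}(x_j,\rho/2)$ in the leaf around the separated points, which are pairwise disjoint by the triangle inequality, and use that the leaves are uniformly $C^1$ (tangent to a continuous distribution on a compact manifold) to get a uniform lower bound $v_0>0$ on the leafwise volume of such balls; the chain along a shortest leafwise path then gives $\operatorname{diam}(\tilde L)\leq \rho\,\operatorname{vol}(\tilde L)/v_0$. What your route buys is that it avoids quoting the covering-degree bound on diameters; what it costs is the extra volume-to-diameter step and the local-geometry input $v_0$, neither of which the paper needs. Your closing remarks on independence from the cover (degrees only bounded by $N$, all constants intrinsic to $(M,\cF)$ and the metric) match the paper's intent.
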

\begin{proof} Let $\tilde L~\subset~\tilde V_i$ be a leaf of $\tilde \cF$.  Then $p_i(\tilde L)$ is a leaf $L$ of $\cF$. Furthermore, the projection $p_i\colon\tilde L~\to~L$ is a finite cover with number of sheets uniformly bounded by the maximal rank of the holonomy groups.  As the projection is a local isometry, the diameter of $\tilde L$ is bounded by the one of $L$ multiplied by the number of sheets.  As $\cF$ is uniformly compact, the diameter and volume of its leaves are uniformly bounded, concluding the proof. 
\end{proof}

\begin{lemma}\label{l.Lebesgue}  Given any holonomy cover $\cV=\{V_i, p_i\}$ of a uniformly compact foliation $\cF$, there is $\delta=\delta(\cV)~>~0$  so that for every leaf $L\in\cF$  there is $i$ so that:
\begin{itemize}
\item for every $x~\in~L$ the ball $B(x,\delta)$ is contained in the interior of $V_i$ and the projection $p_i\colon\tilde V_i~\to~V_i$ induces an isometry in the restriction to every connected component of $p_i^{-1}(B(x,\delta))$, 
\item in particular, the Hausdorff ball $B_H(L,\delta)$ is contained in the interior of $V_i$.
\end{itemize}
\end{lemma}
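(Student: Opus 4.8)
The plan is to deduce Lemma~\ref{l.Lebesgue} from the existence of the holonomy cover (Lemma~\ref{l.covering}), from Epstein's theorem (Theorem~\ref{theorem_epstein}), and from the compactness of the leaf space $M/\cF$. The key observation is that both conclusions are ``open conditions at the leaf level'': the set of leaves $L$ for which there exists some $i$ with $B(x,\delta)\subset\interior(V_i)$ for all $x\in L$ is, for each fixed $\delta$, an open saturated set, and as $\delta$ decreases these sets exhaust $M$ because the interiors of the $V_i$ cover $M$. So the hope is that a single $\delta$ works uniformly by a Lebesgue-number-type argument on the compact space $M/\cF$, and then the local isometry property and the statement about the Hausdorff ball follow from the construction of the lifted metric and Corollary~\ref{lemma_neighborhoods}.

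\medskip
First I would fix, for each $i$, the saturated open set $W_i=\interior(V_i)$, so that $\{W_i\}$ is an open saturated cover of $M$ whose images $\{\pi(W_i)\}$ form an open cover of the compact metric space $M/\cF$ (the images are open since $\pi$ is open, being a quotient by a foliation with the saturation of opens open; in the uniformly compact case this is standard). Let $2\delta_0>0$ be a Lebesgue number for this cover of $M/\cF$ with respect to the Hausdorff metric $d_H$. Then for every leaf $L$ the Hausdorff ball $B_H(L,2\delta_0)$ is contained in some $\pi(W_i)$, i.e. the saturated set $B_H(L,2\delta_0)$ is contained in $W_i=\interior(V_i)$; this already gives the second bullet with $\delta=2\delta_0$ (in fact with room to spare). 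Next, by Corollary~\ref{lemma_neighborhoods} applied with $\alpha=\delta_0$, there is $\delta_1>0$ such that for every $x\in M$ the $\cF$-saturate of the ball $B(x,\delta_1)$ is contained in $B_H(L(x),\delta_0)$; in particular $B(x,\delta_1)\subset B_H(L(x),\delta_0)\subset B_H(L,2\delta_0)\subset\interior(V_i)$ whenever $x\in L$. This handles the ``contained in the interior of $V_i$'' part of the first bullet.

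\medskip
It remains to obtain the isometry statement on connected components of $p_i^{-1}(B(x,\delta))$. Since $p_i$ restricted to the interior of $\tilde V_i$ is a local isometry (it is a covering map and $\tilde V_i$ carries the pulled-back metric), it is automatically an isometry in restriction to any connected set that is ``evenly covered'' in the metric sense, i.e. on which a continuous section of $p_i$ exists. A ball $B(x,\delta)$ of sufficiently small radius is geodesically convex and simply connected, hence evenly covered by the finite covering $p_i$, so each connected component of $p_i^{-1}(B(x,\delta))$ maps isometrically (more precisely, the restriction of $p_i$ to each component is a bijective local isometry onto $B(x,\delta)$, hence an isometry for the induced path metrics). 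To make this uniform over all $i$ and all $x$ one uses a Lebesgue number for the trivializing cover of each $V_i$ by evenly covered opens together with the finiteness of the cover $\cV$; shrinking $\delta_1$ to a final $\delta=\delta(\cV)\le\min(\delta_1,2\delta_0)$ then works for every leaf $L$. The main obstacle, and the step requiring the most care, is precisely this uniformity: a priori the ``even covering radius'' of $p_i$ could degenerate near the boundary of $V_i$, so one must argue on a slightly smaller saturated set (e.g. using that $B_H(L,2\delta_0)$ sits compactly inside $\interior(V_i)$ and that on a compact subset of the interior the local isometry $p_i$ has a positive injectivity/evenly-covered radius) in order to extract a single $\delta$ independent of the leaf and of the chosen $V_i$.
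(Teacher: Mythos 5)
Your proposal is correct and follows essentially the same route as the paper: a Lebesgue number for the cover of the compact leaf space $M/\cF$ by the (saturated, open) interiors of the $V_i$, then Corollary~\ref{lemma_neighborhoods} to place the metric balls $B(x,\delta)$ inside the Hausdorff neighborhood of $L$, and a final shrinking of $\delta$ to secure the isometry on components of $p_i^{-1}(B(x,\delta))$. Your elaboration of that last step (even covering of small simply connected balls and uniformity away from $\partial V_i$) just makes explicit what the paper compresses into ``by shrinking $\delta_1$ if necessary.''
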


\begin{proof} The interiors of the $V_i$'s induce an open cover of the leaf space $M/\cF$, endowed with the Hausdorff distance. Therefore, there is a Lebesgue number $\delta_0$ of this cover.  
In other words, for every leaf $L\in\cF$ there is $i$ such that the Hausdorff ball $B_H(L,\delta_0)$ is contained in the interior of $V_i$.  One deduces from Corollary~\ref{lemma_neighborhoods} the existence of $\delta_1$ for which the union of the balls of radius $\delta_1$ centered at points $x \in L$ is contained in $B_H(L,\delta_0)$.  By shrinking $\delta_1$ if necessary one gets with $\delta:=\delta_1$ the announced isometry condition. 
\end{proof}

\subsection{The holonomy covers and the Hausdorff metric}\label{Hausdorff metric}

\begin{rem} Let $\cV=\{V_i,p_i\}$ be a holonomy cover.
We still denote by $d_H$ the Hausdorff distance in each  $\tilde V_i$. 

For any leaves $L_1,L_2~\subset~V_i$  and any lifts $\tilde L_1,\tilde L_2$  in $\tilde V_i$ of  $L_1,L_2$  it holds that 
$$d_H(\tilde L_1,\tilde L_2) \geq d_H(L_1,L_2).$$
Therefore  for every leaf $L $ and $i$ with $B_H(L,\delta)~\subset~\interior(V_i)$, where $\delta=\delta(\cV)$ is given by Lemma~\ref{l.Lebesgue},  and any lift $\tilde L$ of $L$ on $\tilde V_i$, it follows that $B_H(\tilde L,\delta)~\subset~\interior(\tilde{V}_i).$ 
\end{rem}

\begin{lemma}
Let  $\cV=\left\{V_i,p_i\right\}$ be a holonomy cover and $\delta=\delta(\cV)~>~0$ 
be given by Lemma~\ref{l.Lebesgue}. 
For every $\delta_0~>~0$  there exists $\delta_1~>~0$, such that for any leaf $L$ and $i$ with $B_H(L,\delta)~\subset~V_i$, one has: 
\begin{itemize}
\item for every lift $\tilde L$ of $L$ on $V_i$ and every $\tilde x~\in~\tilde L$ one has 
$$\tilde \cF(B_{\delta_1}(\tilde x)) \subset B_H(\tilde L,\delta_0),$$ 
where $\tilde \cF$ denotes the lift of $\cF$ on $\tilde V_i$; 
\item 
 for every lift  $\tilde L~\subset~p^{-1}_i(L)$  and for every leaf $L_2~\subset~B_H(L,\delta_1)$ there exists a lift  $\tilde L_2~\subset~p^{-1}_i (L_2)$ such that $\tilde L_2~\subset~B_H(\tilde L,\delta_0)$. 
\end{itemize}

\label{lemma_distances}
\end{lemma}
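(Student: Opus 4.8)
The plan is to exploit the fact that, by Lemma~\ref{l.diameter}, all leaves of $\tilde\cF$ have uniformly bounded diameter, so the Hausdorff distance between two lifted leaves that pass through nearby points is automatically small; and conversely, if two lifted leaves are Hausdorff-close then they have a point in common with some small ball of a given point. Both halves of Lemma~\ref{lemma_distances} are essentially restatements of this principle, obtained by a compactness argument on the leaf space together with Reeb stability, and the proof should follow the pattern of Lemma~\ref{l.Lebesgue}.

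For the first item I would argue as follows. Fix $\delta_0>0$. By Corollary~\ref{lemma_neighborhoods} applied to $\cF$ on $M$, there is $\epsilon>0$ such that for every $x\in M$ the saturate $\cF(B(x,\epsilon))$ is contained in $B_H(L(x),\delta_0/2)$. Now let $L$ be a leaf and $i$ with $B_H(L,\delta)\subset V_i$, and let $\tilde L$ be a lift of $L$ in $\tilde V_i$ and $\tilde x\in\tilde L$. Since $p_i$ is a local isometry, for $\delta_1\le\delta$ small enough the ball $B_{\delta_1}(\tilde x)$ is isometrically mapped by $p_i$ onto $B_{\delta_1}(x)$, where $x=p_i(\tilde x)$; taking $\delta_1\le\epsilon$, the leaf through any point $\tilde y\in B_{\delta_1}(\tilde x)$ projects to a leaf $L(y)\subset B_H(L,\delta_0/2)$, hence $L(y)\subset V_i$ and it has a well-defined lift $\tilde L(y)$ through $\tilde y$. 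Since $p_i$ is a local isometry and both $\tilde L(y)$ and $\tilde L$ have diameter uniformly bounded (Lemma~\ref{l.diameter}), and since they respectively project onto $L(y)$ and $L$ with $d_H(L(y),L)<\delta_0/2$, one gets $d_H(\tilde L(y),\tilde L)<\delta_0$ provided $\delta_1$ is chosen small enough — here I use that a small Hausdorff distance downstairs together with bounded diameters upstairs and the covering-space structure of $p_i|_{\tilde L(y)}$ forces each sheet of $\tilde L(y)$ to stay close to the corresponding sheet of $\tilde L$. This is the step I expect to require the most care: controlling the Hausdorff distance of the lifts requires that the covering $p_i$ does not ``twist'' the leaves apart, which is exactly what is guaranteed by the trivial holonomy of $\tilde\cF$ and the fiber-bundle structure from Theorem~\ref{thm1}; making this quantitative is the heart of the argument.

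For the second item I would proceed dually. Given $\tilde L\subset p_i^{-1}(L)$, pick a point $\tilde x\in\tilde L$ and set $x=p_i(\tilde x)$. Choose $\delta_1\le\delta_1'$ where $\delta_1'$ is the constant just produced for the first item with the value $\delta_0$; shrink further so that $\delta_1\le\delta$ and so that, by Lemma~\ref{l.Lebesgue} and the local isometry property, balls of radius $\delta_1$ around points of $L$ lift isometrically. If $L_2\subset B_H(L,\delta_1)$, then since $d_H(L,L_2)<\delta_1\le\delta$ there is a point $y\in L_2$ with $d(x,y)<\delta_1$, hence $y\in B_{\delta_1}(x)$, and this ball has a unique lift $\tilde B$ containing $\tilde x$ isometric to $B_{\delta_1}(x)$; let $\tilde y\in\tilde B$ be the lift of $y$. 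The leaf $\tilde L_2$ of $\tilde\cF$ through $\tilde y$ is a connected component of $p_i^{-1}(L_2)$, and by the first item (applied with $\tilde x$ and noting $\tilde y\in B_{\delta_1}(\tilde x)$) we get $\tilde L_2=\tilde\cF(\tilde y)\subset B_H(\tilde L,\delta_0)$, as desired. Throughout, the only genuinely nontrivial input is Lemma~\ref{l.diameter} (uniform bound on diameters of lifted leaves) combined with the fact that the cover $\cV$ is finite and each $p_i$ is a local isometry; everything else is a Lebesgue-number and uniform-continuity bookkeeping exercise on the compact leaf space $M/\cF$.
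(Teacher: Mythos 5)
Your overall architecture is the right one, and your treatment of the second item is essentially the paper's: pick $x\in L$ and $y\in L_2$ at distance less than $\delta_1$, lift $y$ next to $\tilde x$ using the local isometry, and take $\tilde L_2$ to be the $\tilde\cF$-leaf through $\tilde y$; the second item then reduces to the first. The problem is that your proof of the first item stops exactly where the real work begins. You reduce everything to the claim that, because $p_i$ is a local isometry, the lifted leaves have uniformly bounded diameter (Lemma~\ref{l.diameter}), and $d_H(L(y),L)$ is small, one must have $d_H(\tilde L(y),\tilde L)<\delta_0$ --- and then you yourself flag that ``making this quantitative is the heart of the argument'' without supplying it. That heuristic is not a proof, and it is not even a correct general principle: a point $\tilde z\in\tilde L(y)$ projects to a point of $L(y)$ that is close to some point $w\in L$, but the preimages of $w$ in $\tilde V_i$ are several, possibly lying on a distant sheet of $\tilde L$ or on a different connected component of $p_i^{-1}(L)$ altogether, and neither the bounded diameter of $\tilde L(y)$ nor the local isometry prevents $\tilde L(y)$ from drifting toward the wrong preimage as one moves along the leaf. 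This is precisely the phenomenon the whole section (and Proposition~\ref{p.example}) is designed to guard against.

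What closes the gap in the paper is a propagation argument that you do not carry out. One fixes the transverse disks field $\cD$ of Section~\ref{s.disksfield} and projects paths in $\tilde L$ starting at $\tilde x$ onto the leaf through $\tilde y$, in the sense of Lemma~\ref{l.projection}. Because $\tilde\cF$ has trivial holonomy, the projection of the endpoint does not depend on the chosen path, so it suffices to control paths of length at most the uniform diameter bound of Lemma~\ref{l.diameter}; such a path meets a uniformly bounded number $K$ of foliated charts (each chart chosen inside a ball of radius less than $\delta$, hence lifting isometrically to $\tilde V_i$), and one applies $K$ times the uniform continuity of plaques within a chart to keep the projection within $\inf(\delta,\delta_0)$ of the original path. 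This is the quantitative mechanism that forces $\tilde L(y)$ to track the chosen lift $\tilde L$ sheet by sheet; without it (or an equivalent argument, e.g.\ a careful compactness argument on $\tilde V_i$ handling the boundary effects the paper warns about), the first item --- and hence the whole lemma --- is not established.
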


According to Corollary~\ref{lemma_neighborhoods} for every uniformly compact foliation $\mathcal{F}$, for every $x~\in~M$ and every $\alpha~>~0$ there exists $\eta~>~0$ such that $\mathcal{F}(B_{\eta}(x))~\subset~B_H(L(x),\alpha)$ where $L(x)$ denotes the leaf through $x$. 

The first item is the same statement, on the $\tilde V_i$. The unique difficulty here are \emph{boundary effects}.  This is the reason why we require $B_H(L,\delta)~\subset~V_i$, so that $\tilde L$ remains at distance $\delta$ from the boundary of $\tilde V_i$. 

\begin{proof}[Idea of the proof] 
We fix a transverse disks field $\cD=\{D_{x,\rho}\}$  as in Section~\ref{s.disksfield}.
 
We want to project the leaf $\tilde L$  along $\cD$ onto the leaf through a point $\tilde y\in B_{\delta_1}(\tilde x)$, and we want to show that, for $\delta_1$ small enough, the distance of the projection remains smaller than $\delta_0$. 

We notice that the fact that $\tilde \cF$ is without holonomy can be expressed as follows: if one projects a path starting at $\tilde x$ onto the leaf through $\tilde y$, according to Lemma~\ref{l.projection}, then the projection of the end point does not depend on the path, (but just on this end point). Therefore, it is enough to consider paths whose length is bounded by the diameter of $\tilde L$.  According to Lemma~\ref{l.diameter}, this diameter is uniformly bounded, so that it is enough to consider paths with a priori bounded length. 

One covers $M$ by a finite atlas of foliated charts of $\cF$ so that each chart is contained in a ball of radius smaller than $\delta$. Therefore each chart meeting $L$ can be lifted isometrically on $\tilde V_i$. 

As the paths we consider have a uniformly bounded length, they cut a uniformly bounded number of charts. Let $K$ denote this bound. 

Now the announced bound follows from applying $K$ times the uniform continuity of the plaques in each chart: for any $\mu~>~0$ there is $\varepsilon~>~0$ so that if two plaques in a chart have points at distance less than $\varepsilon$, then the Hausdorff distance between the plaques is less than $\mu$. 
This proves the first item, choosing $\delta_1$ small enough so that the projection remains at distance less than $\inf(\delta,\delta_0)$.

The second item is a direct consequence: choose a point $x~\in~L$ and $y~\in~L_2$ so that $d(x,y)~<~\delta_1~<~\delta$. Let $\tilde x$ be a lift of $x$ on $\tilde L$.  There is a lift $\tilde y$ of $y$ for which $d(\tilde x,\tilde y)~<~\delta_1$.  The announced leaf $\tilde L_2$ is the leaf through $\tilde y$.

\end{proof}

The next lemma asserts that the Hausdorff distance of the lifts of leaves does not depend on the open neighborhood $V_i$ such that the leaves remain far from the boundary. 

\begin{lemma}[Local isometry] Let  $\cV=\left\{V_i,p_i\right\}$  be a holonomy cover and  
$\delta=\delta(\cV)~>~0$ given by Lemma~\ref{l.Lebesgue}.

Then there exists $\delta_0~>~0$ such that for every point  $x~\in~M$ and $i,j$ such that $B_H(L(x),\delta)~\subset~U_i~\cap~U_j$, for all $x_1~\in~p_i^{-1}(x), x_2~\in~p_j^{-1}(x)$ there exists an isometry 
$$I: B_H(\tilde L(x_1),\delta_0) \rightarrow B_H(\tilde L(x_2),\delta_0)$$ 

such that $p_j \circ I = \id_M \circ p_i$. Furthermore, $I(x_1)=x_2$ and $I$ induces a conjugacy between the restrictions of the lifted foliation $\tilde \cF$ to $B_H(\tilde L(x_1),\delta_0)$ and $B_H(\tilde L(x_2),\delta_0)$.
\label{lemma_independence_cover}
\end{lemma}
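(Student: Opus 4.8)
The plan is to build the isometry $I$ by a holonomy-type transport: the holonomy covering of a leaf is unique up to isomorphism, so a lift of $x$ on $\tilde V_i$ and a lift of $x$ on $\tilde V_j$ have canonically isomorphic pointed neighborhoods, and this isomorphism is an isometry because by construction (Section~\ref{s:covers}, the paragraph preceding Lemma~\ref{l.diameter}) the metric on each $\tilde V_i$ is the pullback of the metric on $M$, so each $p_i$ is a local isometry on the interior. First I would fix $x$ and the indices $i,j$ with $B_H(L(x),\delta)\subset \interior(V_i)\cap\interior(V_j)$, choose $x_1\in p_i^{-1}(x)$, $x_2\in p_j^{-1}(x)$, and let $\tilde L(x_1)\subset\tilde V_i$, $\tilde L(x_2)\subset\tilde V_j$ be the corresponding leaves. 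By Lemma~\ref{l.covering} and the remark following it, the restriction of $p_i$ to $\tilde L(x_1)$ and of $p_j$ to $\tilde L(x_2)$ are both conjugate to the holonomy covering of $L(x)$; since $p_i(x_1)=p_j(x_2)=x$, there is a unique covering isomorphism $\tilde L(x_1)\to\tilde L(x_2)$ over $L(x)$ sending $x_1$ to $x_2$. This is a leafwise isometry.

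Next I would extend this leafwise map to a neighborhood. The conceptual point is that both $\tilde V_i$ and $\tilde V_j$, restricted near these leaves, ``are'' the holonomy cover of a saturated neighborhood of $L(x)$ in $M$, which is intrinsic to $(f,\cF^c)$ and does not remember $i$ or $j$: by the second item of the remark after Lemma~\ref{l.covering}, any cover on which the lift of $\cF$ is holonomy-free factors through the holonomy cover. Concretely, I would work in a saturated Hausdorff ball $B_H(L(x),\delta)$, which by Theorem~\ref{theorem_epstein} and Corollary~\ref{lemma_neighborhoods} is a saturated neighborhood lying inside both $\interior(V_i)$ and $\interior(V_j)$, with its canonical holonomy cover $\widehat W$. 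Each $p_i^{-1}(B_H(L(x),\delta))$ is a (not necessarily connected) cover of $B_H(L(x),\delta)$ with holonomy-free lifted foliation, hence factors through $\widehat W$; picking base points $x_1$ resp.\ $x_2$ identifies a single connected component of each with $\widehat W$ (with a marked lift $\hat x$ of $x$), and composing these two identifications gives a homeomorphism $I$ between neighborhoods of $\tilde L(x_1)$ and of $\tilde L(x_2)$ with $p_j\circ I=p_i$ on its domain and $I(x_1)=x_2$. Since both identifications are local isometries onto $B_H(L(x),\delta)$ composed with a covering, $I$ is an isometry, and it obviously conjugates the lifted foliations.

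Finally I would pin down the domain: I need $\delta_0>0$, uniform in $x,i,j$, so that $B_H(\tilde L(x_1),\delta_0)$ is contained in the connected component of $p_i^{-1}(B_H(L(x),\delta))$ that we identified with $\widehat W$ (and similarly on the $j$ side), so that $I$ is actually defined and isometric on all of $B_H(\tilde L(x_1),\delta_0)$ and maps it onto $B_H(\tilde L(x_2),\delta_0)$. Such a $\delta_0$ exists: by Lemma~\ref{l.diameter} the lifted leaves have uniformly bounded diameter, by Lemma~\ref{l.Lebesgue} one controls balls around their points, and by the Remark opening Section~\ref{Hausdorff metric} (``$B_H(\tilde L,\delta)\subset\interior(\tilde V_i)$'') together with Lemma~\ref{lemma_distances} a Hausdorff ball of uniform radius around $\tilde L(x_1)$ stays inside the chosen component and at uniform distance from the boundary; one then applies Lemma~\ref{lemma_distances} on the $j$ side as well, shrinking $\delta_0$ once more, to guarantee $I\bigl(B_H(\tilde L(x_1),\delta_0)\bigr)=B_H(\tilde L(x_2),\delta_0)$ rather than a mere inclusion. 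The main obstacle I expect is precisely this last uniformity-and-surjectivity bookkeeping, i.e.\ choosing $\delta_0$ so that the locally defined isometry is genuinely onto the target Hausdorff ball on \emph{both} sides simultaneously, uniformly over $x$; the existence and isometry property of $I$ itself is essentially forced by the universal property of the holonomy cover and the fact that the $p_i$ are local isometries.
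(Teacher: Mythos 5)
Your first step---the pointed isometry $I_L\colon \tilde L(x_1)\to\tilde L(x_2)$ obtained from the uniqueness of the pointed holonomy covering of the leaf $L(x)$ together with the fact that $p_i$ and $p_j$ are local isometries---is exactly the paper's first step. You diverge at the extension: the paper extends $I_L$ by hand, fixing a transverse disks field $\cD=\{D_{y,\rho}\}$ as in Lemma~\ref{l.diskfield}, lifting each disk isometrically to a disk $D_{\tilde y,\rho}$, proving (using the triviality of the holonomy of $\tilde\cF$) that for $\rho$ small the disks $D_{\tilde y,\rho}$, $\tilde y\in\tilde L$, are pairwise disjoint, so that $\bigcup_{\tilde y\in\tilde L}D_{\tilde y,\rho}$ is a tubular neighborhood, and then setting $I|_{D_{\tilde y,\rho}}=\bigl(p_j|_{D_{I_L(p_i(\tilde y)),\rho}}\bigr)^{-1}\circ p_i$. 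Your route through the universal property of the holonomy cover of the saturated set $W=B_H(L(x),\delta)$ is more conceptual, and your final paragraph on choosing $\delta_0$ uniformly and getting surjectivity onto the target Hausdorff ball is, if anything, more careful than the paper's sketch.

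The weak point is the pivotal sentence ``picking base points $x_1$ resp.\ $x_2$ identifies a single connected component of each with $\widehat W$'': this is asserted where it must be proved, and it is exactly where the content of the lemma sits. The remark after Lemma~\ref{l.covering} only yields a factorization $r_i\colon C_i\to\widehat W$ with $p_i=\hat p\circ r_i$ (a holonomy-free cover \emph{covers} the holonomy cover); to upgrade $r_i$ to an isomorphism you need its degree to be $1$, i.e.\ $\deg(C_i\to W)=\deg(\widehat W\to W)=|\Hol(L(x))|$, equivalently that the component $C_i$ contains a single component of $p_i^{-1}(L(x))$. That is true but needs an argument, for instance: the Reeb neighborhood retracts onto $L(x)$, so $\pi_1(W,x)\cong\pi_1(L(x),x)$, and the subgroup $(p_i)_*\pi_1(C_i,x_1)$ equals the kernel of the holonomy representation (one inclusion because $\tilde L(x_1)\to L(x)$ is the holonomy covering, the other because a lift starting at $x_1$ of a loop contained in $L(x)$ stays in $\tilde L(x_1)$). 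Only then do $(C_i,x_1)$ and $(C_j,x_2)$ correspond to the same subgroup of $\pi_1(W,x)$ and become isomorphic as pointed covers; you should also say why the ``canonical holonomy cover $\widehat W$'' of an arbitrary saturated Hausdorff ball exists, since the paper only constructs holonomy coverings for the sets $V_i$ of the chosen cover. With these points filled in your argument is a correct and genuinely different (covering-space-theoretic) proof; the paper's transverse-disks construction is precisely a device for avoiding this fundamental-group bookkeeping.
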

\begin{proof}[Idea of the proof] First notice that the restriction of $p_i$ and $p_j$ to $\tilde L(x_1)$ and $\tilde L(x_2)$ are copies of the holonomy cover of $L(x)$. Furthermore, they are local isometries following Lemma~\ref{l.Lebesgue} for $\delta_0 < \delta(\cV)$. One deduces that there is an isometry $I_L\colon \tilde L(x_1)~\to~\tilde L(x_2)$ with $I_L(x_1)=x_2$. 

We consider the transverse disks field $\cD$ of radius $\rho$ given by Lemma~\ref{l.diskfield} and smaller than $\delta$.  As a consequence, for every $y~\in~L$ and any lift $\tilde y$ of $y$ by $p_i$ or $p_j$, there is a well defined lift of $D_{y,\rho}$ centered at $\tilde y$, and this lift is an isometry. Let us denote by $D_{\tilde y,\rho}$ this lift. 

As $\tilde \cF$ is without holonomy, one can prove that
\begin{claim}
for $\rho$ small enough, for every leaf $L$ and $i$ so that $B_H(L,\delta)~\subset~V_i$, for every leaf $\tilde L~\subset~p_i^{-1}(L)$ and for every 
$\tilde x~\neq~\tilde y\in \tilde L$ one has $D_{\tilde x,\rho}\cap D_{\tilde y,\rho}=\emptyset$.

In other words, $\bigcup_{\tilde x\in\tilde L} D_{\tilde x,\rho}$ is a tubular neighborhood of $\tilde L$.
\end{claim}

Now, the announced isometry $I$ is defined as 

$$I|_{D_{\tilde y,\rho}}= \left(p_j|_{D\left(I_L( p_i ( \tilde y)),\rho\right)}\right)^{-1}\circ p_i.$$
\end{proof}

As  a direct consequence of Lemmata~\ref{lemma_distances} and \ref{lemma_independence_cover} one gets: 

\begin{corol}\label{corol_generality} Let $\cV=\{V_i\}$ be a holonomy cover. 
Consider a leaf $L$ and $i,j$ such that $B_H(L, \delta)~\subset~V_i\cap V_j$. Assume that there are $L_1\subset V_i$, and lifts $\tilde L^i~\in~p^{-1}_i(L)$ and $\tilde L_1^i~\in~p^{-1}_i(L_1)$  such that $$d_H(\tilde  L^i,\tilde L^i_1)< \delta_0.$$

Then  for every lift $\tilde L^j~\subset~p_j^{-1}(L)$ there exists $\tilde L_1^j~\in~p^{-1}_j(L_1)$ such that $$d_H(\tilde L^j,\tilde L_1^j)~=~d_H(\tilde L^i,\tilde L_1^i)~\leq~\delta_0.$$
\end{corol}

\subsection{A modified Hausdorff distance}
We consider a holonomy cover $\cV$ and $\delta=\delta(\cV)$ given by Lemma~\ref{l.Lebesgue}. Let $\delta_0~>~0$  be the constant associated to $\frac\delta2$ by Lemma \ref{lemma_independence_cover}, and $\delta_1~>~0$ associated to $\frac{\delta_0}2$ by  Lemma~\ref{lemma_distances}. We can assume that $\delta_1~<~\frac{1}{4}\delta$.

To every pair $L_1,L_2$ of  leaves we associate a number $\De_H(L_1,L_2)$ as follows: 
$$\Delta_H(L_1,L_2):=\begin{cases}  &\frac{1}{2}\delta_0 ,\quad \text{if}\; d_H(L_1,L_2) \geq \delta_1, \mbox{and}\\
 &\min \left\{\frac{1}{2}\delta_0,d\right\} ,\quad \text{if}\; d_H(L_1,L_2) \leq \delta_1,\\
 &\mbox{ where  $d$ is the lower bound of the }d_H(\tilde L_1,\tilde L_2)\\
 &\mbox{ for all } \tilde L_1 \in p^{-1}_i(L_1),\tilde L_2 \in p^{-1}_i(L_2)\;\mbox{ such that  }  B_H(L_1,\delta)\subset V_i.\end{cases}.$$

We denote a ball with respect to  $\De_H$ by $$B_{\Delta_H}(L,\delta):=\left\{L_2 \subset M\;\big|\; \Delta_H(L,L_2) < \delta\right\}.$$

As a corollary of Lemma \ref{lemma_distances} we prove:
\begin{corol}\label{corol_distance} $\Delta_H$ is a distance, and it is topologically equivalent to $d_H$, i.e. for any leaf $L$ and any $\alpha~>~0$ there exist $\beta_1,\beta_2~>~0$  such that 
\begin{align}\label{equation_equivalence}
B_H(L,\beta_1) \subset B_{\Delta_H}(L, \alpha) \mbox{ and } B_H(L,\beta_2) \supset B_{\Delta_H}(L, \alpha).
\end{align}
\end{corol}
\begin{proof}
The positive definiteness is directly inherited by $d_H$. The symmetry comes from the fact that the lower bound of $d_H(\tilde L_1,\tilde L_2)$ for any $\tilde L_1~\in~p^{-1}_i(L_1)$,\newline$\tilde L_2~\in~p^{-1}_i(L_2)$ does not depend on the $i$ for which $B_H(L_1,\frac\delta 2)~\subset~V_i$, according to Lemma~\ref{lemma_independence_cover}.  As we consider $i$ for which $B_H(L_1,\delta)~\subset~V_i$ and $d_H(L_1,L_2)~<~\delta_1~<~\delta/2$, one also has $B_H(L_2,\delta/2)~\subset~V_i$  leading to the symmetry. \\
For the triangle inequality we consider three leaves $L_1,L_2,L_3$ and we need to prove $$\De_H(L_1,L_3)~<~\De_H(L_1,L_2)~+~\De_H(L_1,L_3).$$  
This is easy if $\De_H(L_1,L_2)~+~\De_H(L_1,L_3)~\geq~\frac12 \delta_0$. We choose $i$ for which $B_H(L_2,\delta)~\subset~V_i$ and we consider lifts $\tilde L_i$ so that $d_H(\tilde L_1,\tilde L_2)~=~\De_H(L_1,L_2)$ and $d_H(\tilde L_3,\tilde L_2)~=~\De_H(L_3,L_2)$.  Then $d_H(L_1,L_3)~<~\De_H(L_1,L_2)~+~\De_H(L_1,L_3)$ (triangular inequality for the Hausdorff distance) and $\De_H(L_1,L_3)~\leq~d_H(L_1,L_3)$.

\vskip 2mm 

We now show that $\Delta_H$ is a topologically equivalent distance to $d_H$. Consider a leaf $L$ and $\alpha~>~0$.

Notice that $d_H(\tilde L_1,\tilde L_2)~\geq~d_H(L_1,L_2)$ so that $\De_H~\geq~\inf\{\frac 12\delta_0, d_H\}$. Therefore, 
$$B_{\De_H}(L, \inf\{\frac 14\delta_0,\alpha\})~\subset~B_H(L,\alpha).$$

The converse inclusion follows from  Lemma~\ref{lemma_distances}: there exists $\beta_2~>~0$ such that any $L_2~\subset~B_H(L,\beta_2)$ admits a lift so that $d_H(\tilde L,\tilde L_2)~<~\alpha$ proving 
 $B_H(L,\beta_2)~\subset~B_{\De_H}(L,\alpha)$.

\end{proof}

\section{Dynamical coherence}\label{sec:coherence}

The aim of this section is to prove  Theorem~\ref{theorem_dc}, i.e. that
partially hyperbolic diffeomorphisms with an invariant uniformly compact
center foliation are dynamically coherent. In other words, we have to prove firstly
that, if $x,y$ are two points in the same stable leaf, that is $y~\in~W^s(x)$, then 
the center leaf $W^c(y)$ through $y$ is contained in the union $\bigcup_{z \in W^c(x)}W^s(z)$ of stable leaves through
 the center leaf of $x$. Further, we have to show that for any $x\in M$ the set $\bigcup_{z\in W^c(x)}W^s(z)$ forms a leaf of the center stable foliation. 

The idea of the proof is very simple: iterating $x$ and $y$ by $f$, the distance
decreases exponentially. Consequently, the Hausdorff distance 
 $d_H(W^c(f^n(x)),W^c(f^n(y))$ tends to $0$ for $n~\rightarrow~\infty$.  
On the other hand, if $W^c(y)$ has a point $w$ outside $\bigcup_{z\in W^c(x)}W^s(z)$,
its unstable manifold $W^u(w)$ cuts $\bigcup_{z\in W^c(x)}W^s(z)$ 
at some point $w$ which is, in some sense, the projection of $z$ on 
$\bigcup_{z\in W^c(x)} W^s(z)$. Keep in mind that this projection might be not unique!
If the distance $d_H(W^c(x),W^c(y))$ is
very small, the projection distance $d(z,w)$ will be small. 
At the same time, the distance $d(f^n(z),f^n(w))$ increases exponentially.  
 
 If the center foliation $\cF^c$ is without holonomy, the projection $w$ of $z$ is uniquely defined, 
 so that we get a contradiction. However, we do not assume $\cF^c$ without holonomy
 so that this projection distance depends on choices.
 Therefore, to solve this problem,we lift the argument on a holonomy cover.   
 
 The argument cannot be so simple, as shows the example in Proposition~\ref{p.example}: the 
 fact that  $d_H(W^c(f^n(x)),W^c(f^n(y))$ tends to $0$ does not imply that 
 $W^c(y)$ is contained in $\bigcup_{z \in W^c(x)}W^s(z)$. The difficulties come from the fact that 
 each $W^c(f^n(x))$ and $W^c(f^n(y))$ may admit many lifts on a holonomy cover. The key point consists in 
 choosing, for every lift of $W^c(f^n(x))$ the appropriate lift of $W^c(f^n(y))$.

\subsection{Local product structure}
We want to prove that the stable and the center foliations are jointly integrable. 
A first step to that is to show that the three foliations -  the stable, 
center and unstable - form a kind of local product structure: 
the local unstable manifold of a point intersects in exactly $1$ point the union 
of the local stable manifold through the local center manifold of a nearby point. 
 That is the aim of the next two lemmas: 
 
\begin{itemize}
\item  Lemma~\ref{dc_lemma1a} asserts the existence of the intersection point.  
That is a very general argument, which holds for any triple of transverse foliations.

\item Lemma~\ref{dc_lemma1} proves the uniqueness of the intersection point. 
This would be wrong for general $\cC^{1,0+}$-foliations: 
it holds for dynamical reasons using the fact that we deal with the stable, 
center and unstable foliations of a partially hyperbolic diffeomorphism.
\end{itemize}

\subsubsection{Triple of pairwise transverse foliations}
Firstly, we establish the existence of an intersection point:
\begin{lemma}
Let $\mathcal{F}_1, \mathcal{F}_2, \mathcal{F}_3$ be three continuous foliations 
with smooth leaves of a compact smooth manifold $M$, tangent to continuous 
distributions $E_1, E_2, E_3$ such that $TM=E_1 \oplus E_2 \oplus E_3$. 
Then there exist  $\delta_0~>~0$ and $C~>~0$ such that for all $\delta~<~\delta_0$ 
and $x,y$ with $d(x,y)~<~C\delta$ the intersection 
$F_{1,\delta}(x) \cap \bigcup_{z \in F_{2,\delta}(y)}(F_{3,\delta}(z))$ is non-empty. 
\label{dc_lemma1a}
\end{lemma}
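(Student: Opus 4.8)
\textbf{Proof plan for Lemma~\ref{dc_lemma1a}.}

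The plan is to find the intersection point as a fixed point of a contraction built from local projections along the three foliations, exactly as one proves local product structure for a pair of transverse foliations, now carried out in two stages because we have a triple. First I would fix a foliated chart picture: since the three distributions $E_1, E_2, E_3$ are continuous and $TM = E_1\oplus E_2\oplus E_3$, around each point of the compact manifold $M$ there is a $C^1$ chart $\varphi\colon U \to (-1,1)^{d_1}\times(-1,1)^{d_2}\times(-1,1)^{d_3}$ in which the leaves of $\mathcal F_i$ are graphs over the $i$-th factor, with uniformly small Lipschitz constant (by shrinking $U$ and using uniform continuity of the distributions, these graphs are as close to affine coordinate planes as we want). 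By compactness of $M$ we get a uniform radius $\delta_0 > 0$ and a uniform such chart around every point, so it suffices to work in one such chart for $d(x,y)$ small enough; the constant $C$ will be determined by the chart size versus $\delta_0$.

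Next I would realize the desired intersection point as a fixed point. Given $x$ and $y$ with $d(x,y) < C\delta$, define on a small ball the map that, to a candidate point $p$, assigns: slide from $y$ along $\mathcal F_2$ to the point $z(p)$ whose $\mathcal F_2$-leaf projection matches the appropriate coordinate of $p$, then slide from $z(p)$ along $\mathcal F_3$, and finally intersect with $F_{1,\delta}(x)$; composing the corresponding coordinate projections gives a self-map $\Phi$ of a $\delta$-ball in the relevant coordinate. Because each leaf is a graph with Lipschitz constant much less than $1$, $\Phi$ is a contraction (the composition of the three near-coordinate-plane projections has Lipschitz constant bounded by a product of small quantities, hence $< 1$), and it maps the closed $\delta$-ball to itself provided $d(x,y) \le C\delta$ with $C$ chosen from the Lipschitz bounds. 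The Banach fixed point theorem then yields a point in $F_{1,\delta}(x)\cap\bigcup_{z\in F_{2,\delta}(y)}F_{3,\delta}(z)$, and the estimates show all the intermediate sliding paths stay within the $\delta$-plaques, so the intersection is genuinely of the local plaques as claimed.

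The main obstacle I expect is purely bookkeeping: making the two-step projection (first along $\mathcal F_2$, then along $\mathcal F_3$, then meeting $\mathcal F_1$) into a \emph{single} well-defined contraction on one Euclidean ball, and tracking carefully that every intermediate point produced really lies in $F_{2,\delta}(y)$ resp.\ $F_{3,\delta}(z)$ resp.\ $F_{1,\delta}(x)$ rather than merely in slightly larger plaques — this is what forces the hypothesis $d(x,y) < C\delta$ with a definite $C$, and what requires the chart to be chosen so that the foliations are sufficiently close to the coordinate planes. There is no dynamics in this lemma (that enters only in Lemma~\ref{dc_lemma1} for uniqueness), so the whole argument is the classical transversality/graph-transform estimate; the only subtlety beyond the two-foliation case is that the "product structure" here is of the form $F_1 \cap (F_2\text{-saturation of } F_3)$, so one of the two transverse directions is itself a union of plaques, which I handle by first choosing the $\mathcal F_2$-coordinate of the target freely (that is the parameter we solve for) and noting that the set $\bigcup_{z\in F_{2,\delta}(y)} F_{3,\delta}(z)$ is, in the chart, a graph over the $(E_2\oplus E_3)$-plane with small Lipschitz constant, reducing everything to the standard two-transverse-foliations intersection.
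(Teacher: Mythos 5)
Your chart-construction step matches the paper's Lemma~\ref{l.charts}, but the core of your argument --- realizing the intersection point as the fixed point of a \emph{contraction} and invoking the Banach fixed point theorem --- does not go through at the stated level of regularity. The foliations here are only $C^{1,0+}$: the leaves are $C^1$ and tangent to continuous distributions, but the transverse dependence (the holonomy) is merely continuous. Your key claim, that in the chart the saturated set $\bigcup_{z \in F_{2,\delta}(y)} F_{3,\delta}(z)$ is a graph over the $(E_2\oplus E_3)$-plane \emph{with small Lipschitz constant}, fails: within each $\mathcal{F}_3$-plaque the graph has small Lipschitz constant, but moving from one $\mathcal{F}_3$-plaque to a nearby one (i.e., in the $E_2$-direction of the parametrization) the $E_1$-coordinate varies only continuously, with no Lipschitz bound. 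Worse, the projection of this saturated disk to the $(E_2\oplus E_3)$-plane need not even be injective --- two disjoint $\mathcal{F}_3$-plaques through nearby points of $F_{2,\delta}(y)$ can sit over the same $(E_2,E_3)$-coordinates at different $E_1$-heights --- so the set need not be a graph at all. Consequently your map $\Phi$ is neither well defined as a single-valued map nor a contraction, and Banach's theorem cannot be applied. (Your scheme is correct for $C^1$ foliations; the obstruction is precisely the low transverse regularity, which is also why the paper stresses that the uniqueness statement of Lemma~\ref{dc_lemma1} would be wrong for general $C^{1,0+}$-foliations and needs dynamics.)

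The paper's proof of existence (Lemma~\ref{l.quasi-constant}) replaces the fixed-point argument by a purely topological one that uses only $C^0$-closeness to the constant model: the saturation $F_{2,3}(y)$ is a topologically embedded $(d_2+d_3)$-disk whose boundary sphere stays uniformly close to the boundary of the model coordinate disk, hence is homotopic to it in the complement of $\mathcal{F}_1(x)$; since that model sphere is not null-homotopic in that complement (it ``links'' $\mathcal{F}_1(x)$), the disk must meet $\mathcal{F}_1(x)$. If you want to keep a fixed-point flavour, what can be salvaged is a Brouwer-type (not Banach) argument applied to the projection of $F_{2,3}(y)$ onto the $(E_2\oplus E_3)$-plane, which has degree one near the boundary --- but that is essentially the paper's degree/homotopy argument in disguise, and it still requires abandoning the graph-with-small-Lipschitz-constant formulation on which your contraction estimate rests.
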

\begin{proof}[Proof of Lemma~\ref{dc_lemma1a}]
Let $d_1,d_2,d_3$ denote the dimensions of the foliations $\cF_1,\cF_2,\cF_3$ in the lemma.
Then $d_1+d_2+d_3~=~d~=~\dim M$.  Lemma~\ref{dc_lemma1a} follows 
from the  Lemmas~\ref{l.charts} and ~\ref{l.quasi-constant} below.

Lemma~\ref{l.charts} asserts that, in sufficiently small local coordinates, 
the bundles $E_1,E_2$ and $E_3$ can be assumed to be arbitrarily close 
to the constant bundles 
$\RR^{d_1}\times\{0\}^{d_2+d_3}$, $\{0\}^{d_1}\times \RR^{d_2}\times\{0\}^{d_3}$ 
and $\{0\}^{d_1+d_2}\times \RR^{d_3}$.

\begin{lemma}\label{l.charts} Under the hypothesis of Lemma~\ref{dc_lemma1a}, 
for every $c~>~0$ there is a finite family $(\psi_n)_n$ of smooth embeddings 
$\psi_n\colon[-4,4]^d~\to~M$ such that the
$\psi_n (]-1/2,1/2[^d)$ form a finite open cover of $M$ and, 
for every $n$,  there are vector fields $e_1,\dots,e_d $ on $[-4,4]^d$ 
satisfying:
\begin{itemize} 
\item the plane fields $(D\psi_n)^{-1}(E_1)$, $(D\psi_n)^{-1}(E_2)$, 
and $(D\psi_n)^{-1}(E_3)$ are generated  by 
$(e_1,\dots,e_{d_1})$, $(e_{d_1+1},\dots,e_{d_1+d_2})$ 
and $(e_{d_1+d_2+1},\dots,e_{d})$, respectively;
\item let $(e_{i,1}(x)\dots e_{i,d}(x))$ 
denote the coordinates of $e_i(x)$ in the canonical basis of 
$\RR^d$. Then 
\begin{itemize}
\item $e_{i,i}=1$ and $|e_{i,j}|~<~c$ for $i~\neq~j$;
\item $e_{i,j}=0$ if $i~\neq~j$  and  
$(i,j)\in\{1,\dots d_1\}^2$ or $(i,j)\in\{d_1+1,\dots d_1+d_2\}^2$ 
or $(i,j)\in\{d_1+d_2+1,\dots d\}^2$.
\end{itemize}

\end{itemize}
\end{lemma}
The proof of Lemma~\ref{l.charts} is straightforward and just follows from the compactness of $M$, 
the continuity and the transversality of the subbundles $E_i$. 

Lemma~\ref{l.quasi-constant} below uses the local coordinates given by Lemma~\ref{l.charts} for proving 
the existence of the intersection point announced by  Lemma~\ref{dc_lemma1a}.

\begin{lemma}\label{l.quasi-constant} There is $C_d>0$ so that, for every $c$ satisfying  
$0<c~<~C_d$ one has the following property.

Let $\cF_1,\cF_2,\cF_3$  be three pairwise transverse $C^0$-foliations of 
$[-4,4]^d$ tangent to $C^0$-bundles $E_1$ $E_2$,$E_3$ of dimensions 
$d_1,d_2$ and $d_3$ with $d_1+d_2+d_3=d$ and satisfying the conclusions
of  Lemma~\ref{l.charts} for a constant $c$, that is: 

There are vector fields $e_1,\dots e_d $ on $[-4,4]^d$ satisfying: 
\begin{itemize} 
\item the plane fields $(D\psi_n)^{-1}(E_1)$, $(D\psi_n)^{-1}(E_2)$, 
and $(D\psi_n)^{-1}(E_3)$ are generated  by 
$(e_1,\dots,e_{d_1})$, $(e_{d_1+1},\dots,e_{d_1+d_2})$ and 
$(e_{d_1+d_2+1},\dots,e_{d})$, respectively;
\item let $(e_{i,1}(x)\dots e_{i,d}(x))$ 
denote the coordinates of $e_i(x)$ in the canonical basis of $\RR^d$. 
Then 

\begin{itemize}
\item $e_{i,i}=1$ and $|e_{i,j}|~<~c$ for $i~\neq~j$;
\item $e_{i,j}=0$ if $i~\neq~j$  and  $(i,j)~\in~\{1,\dots d_1\}^2$ or 
$(i,j)~\in~\{d_1+1,\dots d_1+d_2\}^2$ or $(i,j)~\in~\{d_1+d_2+1,\dots d\}^2$.
\end{itemize}
\end{itemize}

Then for any pair of points $x,y~\in~[-1,1]^d$, the leaf of $\cF_1$ through $x$ 
cuts the union of the leaves of $\cF_3$ through the points of $\cF_2$ 
through $y$: 
$$\cF_1(x)\cap\bigcup_{z\in\cF_2(y)} \cF_3(z)\neq\emptyset.$$

\end{lemma}

\begin{proof}[Proof of Lemma~\ref{l.quasi-constant}]One easily checks that, for every $c$ small enough, 
the leaves $\cF_i(x)$ of $\cF_1$, $\cF_2$ and $\cF_3$ through points 
$x~\in~[-3,3]^d$ are complete graphs over the cube 
$[-4,4]^{d_1}\times\{0\}^{d_2+d_3}$,$\{0\}^{d_1}\times[-4,4]^{d_2}\times\{0\}^{d_3}$, 
and  $\{0\}^{d_1+d_2}\times[-4,4]^{d_3}$, 
respectively. 
Furthermore, for $x~\in~[-1,1]^d$ the intersection of 
$\cF_2(x)$ with $[-3,3]^d$ is a complete graph over  
$\{0\}^{d_1}\times[-2,2]^{d_2}\times\{0\}^{d_3}$.

For every point $y=(y_1,\dots, y_d)~\in~[-1,1 ]^d$ one considers the union 
$F_{2,3}(y)$ of the $\cF_3$-leaves through $\cF_2(y)\cap[-3,3]^d$. 
These $\cF_3$-leaves are pairwise disjoint so that $F_{2,3}$ is a 
topological embedding of $[-3,3]^{d_2}\times [-4,4]^{d_3}$, 
in particular, it is a $(d_2+d_3)$-disk. Furthermore, as the constant $c$ tends 
to $0$, $F_{2,3}(y)$ tends uniformly to 
$\{(y_1,\dots,y_{d_1})\}\times [-3,3]^{d_2}\times [-4,4]^{d_3}$ for the 
$C^0$-topology. 

Then, the boundary $\partial F_{2,3}(y)$ is a $(d_2+d_3)-1$-sphere, 
and tends uniformly to  
$\{(y_1,\dots,y_{d_1})\}\times \partial\left([-3,3]^{d_2}\times [-4,4]^{d_3}\right)$. 
In particular, for $c$ small enough, $\partial F_{2,3}(y)$ is disjoint 
from $[-4,4]^{d_1}\times [-2,2]^{d_2+d_3}$ and is homotopic to \newline
$\{(y_1,\dots,y_{d_1})\}\times \partial\left([-3,3]^{d_2}\times [-4,4]^{d_3}\right)$ 
in $[-4,4]^d\setminus [-4,4]^{d_1}\times [-2,2]^{d_2+d_3}$. 

In particular, one gets that for every $x,y~\in~[-1,1]^d$,  $\partial F_{2,3}(y)$ 
is homotopic to \newline 
$\{(y_1,\dots,y_{d_1})\}\times \partial\left([-3,3]^{d_2}\times [-4,4]^{d_3}\right)$ 
in $[-4,4]^d\setminus [-4,4]^{d_1}\times \cF_1(x)$.

This implies that $\partial F_{2,3}(y)$ is not homotopic to a point 
in $[-3,3]^d\setminus \cF_1(x)$. As a consequence, we prove 
$F_{2,3}(y)\cap F_1(x)\neq \emptyset$, ending the proof of 
Lemma~\ref{l.quasi-constant}. 

\end{proof}
The proof of Lemma~\ref{l.quasi-constant} also finishes the proof of Lemma~\ref{dc_lemma1a}
\end{proof}

\subsubsection{Transverse foliations associated to a partially hyperbolic 
diffeomorphism}

The next lemma ensures that the intersection point is indeed unique.  
The argument is very similar to the one  presented in \cite[Lemma 7]{B01} 
and has been also explained to the second author in a personal communication with Brin. 

We consider now a partially hyperbolic diffeomorphism $f$ on a compact manifold $M$, with $f$-invariant 
center foliation $\cF^c$.  
We denote by $\lambda$ an upper bound for $\|Df\|$. According to \cite{G07} we can use an 
equivalent adapted metric such that there exists constants $0~<~\alpha~<~1~<~\beta$ such 
that the norm $\left\|Df|_{E^s}\right\|$ is less than $\alpha$ and 
$\left\|Df^{-1}|_{E^u}\right\|$ is less than $\beta^{-1}$. 

\begin{rem}\label{r.distance}

\begin{itemize}
\item As the center foliation $\cF^c$ is tangent to a continuous bundle, its leaves 
are \emph{uniformly $C^1$}.  Therefore, the distance (in the manifold) between points 
in the same foliated \emph{plaque} is equivalent to the distance in the center leaf. More 
precisely, we consider a finite foliated atlas $(U_i)_i$ of $\cF^c$. The plaques 
are the leaves of the restrictions of $\cF^c$ to the charts $U_i$. 
For every $\eta~>~0$  there is $\mu$ so that, if $x,y$ are points in the same leaf of 
$\cF^c$ such that $d(x,y)~<~\mu$ and $d^c(x,y)~>~(1+\eta)d(x,y)$ then there is $i$ so 
that  $x$ and $y$ belong to $U_i$ but are not in the same plaque.  As a consequence, 
there is a constant $\De^c~>~0$ so that for all pair of points $x,y \in U_i$, but not in the same plaque, we have $d^c(x,y)~>~\De^c$. The same holds for the stable and unstable foliations  $\cF^s$ and $\cF^u$. 
\item If $\cF^c$  is a compact foliation with trivial holonomy,
then two  points in the same leaf which are nearby in the ambient manifold  are in 
the same plaque.  Therefore, the item above can be reformulated as follows: for any 
$\eta~>~0$  there is $\mu~>~0$ so that, if $x,y$ are in the same leaf and if $d(x,y)~<~\mu$ 
then $d^c(x,y)~<~(1+\eta)d(x,y)$.  

\item The foliations $\cF^s,\cF^c,\cF^u$ are tangent to continuous  bundles 
$E^s,E^c,E^u$ which are in direct sum.  As a consequence for $\mu$ small enough, for 
every $x~\in~M$ the intersection $W^u_\mu(x)\cap W^c_\mu(x)$ contains at most $1$ 
point: $W^u_\mu(x)\cap W^c_\mu(x)=\{x\}$.
 \end{itemize}
\end{rem}

\begin{lemma}
Let $f:M~\rightarrow~M$ be a partially hyperbolic $C^1$-diffeomorphism with an $f$-
invariant center foliation $\mathcal{F}^c$, and let $C~>~0$ be the constant associated 
to $\cF^s,\cF^c,\cF^u$ by Lemma~\ref{dc_lemma1a}. Then 
\begin{itemize}
\item there exist a constant  $\mu_0~>~0$  and for any $x,y~\in~M$ with $d(x,y)~<~\mu_0$ the intersection 
$$W^u_{\mu_0}(y) \cap \left(\bigcup_{z \in W^c_{\mu_0}
(x)}W^s_{\mu_0}(z))\right)$$ contains at most one point;

\item furthermore, if $d(x,y)\leq~\frac{\mu_0}C$ then this intersection is a 
unique point, which belongs to 
$$W^u_{C\mu}(y) \cap \left(\bigcup_{z \in W^c_{C\mu}
(x)}W^s_{C\mu}(z))\right),$$ 
where $\mu=d(x,y)$. 
\end{itemize}
\label{dc_lemma1}
\end{lemma}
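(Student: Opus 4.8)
The plan is to prove both items of Lemma~\ref{dc_lemma1} by the dynamical argument sketched in the introduction, using Lemma~\ref{dc_lemma1a} for existence and normal hyperbolicity for uniqueness. First I would fix $\mu_0$ small enough that: (a) $\mu_0 < \rho$ (the transverse disk field constant) and $\mu_0$ is smaller than the constant $\mu$ of Remark~\ref{r.distance} governing the plaque/leaf distance comparison and the $W^u_{\mu}\cap W^c_{\mu}$ uniqueness; (b) $C\mu_0$ is still in the range where Lemma~\ref{dc_lemma1a} applies, so that for $d(x,y) < \mu_0$ the intersection $W^u_{\mu_0}(y)\cap\bigcup_{z\in W^c_{\mu_0}(x)}W^s_{\mu_0}(z)$ contains at least one point when $d(x,y)\le \mu_0/C$; (c) the local product structure from the direct sum $E^s\oplus E^c\oplus E^u$ holds on this scale, i.e. two points in $\bigcup_{z\in W^c_{\mu_0}(x)}W^s_{\mu_0}(z)$ that lie in the same local unstable manifold and are close must coincide. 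Item~(c) plus Lemma~\ref{dc_lemma1a} essentially dispatch the existence half of the second bullet, with the size bound $C\mu$ coming directly from the constant $C$ in Lemma~\ref{dc_lemma1a}.

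The heart is uniqueness. Suppose $w_1, w_2$ are two distinct points in $W^u_{\mu_0}(y)\cap\bigcup_{z\in W^c_{\mu_0}(x)}W^s_{\mu_0}(z)$, say $w_i \in W^s_{\mu_0}(z_i)$ with $z_i\in W^c_{\mu_0}(x)$. The idea is to iterate forward by $f$. Since $w_1, w_2$ both lie on $W^u(y)$ and are distinct, the unstable distance $d^u(f^n w_1, f^n w_2)$ grows by a factor at least $\beta>1$ at each step (using the adapted metric), as long as they remain in a local unstable plaque; meanwhile, since $w_i \in W^s(z_i)$, the stable distance $d(f^n w_i, f^n z_i)$ shrinks at least by $\alpha<1$, and $f^n z_i \in W^c(f^n x) = f^n W^c(x)$. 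Because $\cF^c$ is uniformly compact, the center leaves $f^n W^c(x)$ have uniformly bounded diameter (by the uniform boundedness of leaf diameters), so $f^n w_1$ and $f^n w_2$ stay within uniformly bounded distance of $f^n x$ and of each other. This caps the growth of $d^u(f^n w_1, f^n w_2)$, and combined with exponential expansion one obtains a contradiction unless $w_1 = w_2$ — provided one can keep the points inside the scale where expansion is valid.

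The main obstacle I anticipate is precisely this scale control: the unstable expansion estimate $d^u(f^n w_1, f^n w_2)\ge \beta\, d^u(f^{n-1} w_1, f^{n-1} w_2)$ is only valid inside a local unstable plaque, i.e. while $d^u(f^n w_1, f^n w_2) < \rho$ for a uniform $\rho$. So the argument must be set up so that the pair stays confined: one wants to use that $w_1$ and $w_2$ are joined through a short stable–center–stable path whose length contracts under iteration on the stable parts while the center parts stay bounded. Concretely, I would run the argument by contradiction on the \emph{first} iterate $N$ at which $d^u(f^N w_1, f^N w_2)$ first exceeds $\rho$; at step $N$ one has $\rho \le d^u(f^N w_1, f^N w_2)$, but also $d(f^N w_1, f^N w_2)$ is bounded by $d(f^N w_1, f^N z_1) + d_H$-diameter terms $+ d(f^N z_2, f^N w_2) \le \alpha^N\mu_0 + \mathrm{diam} + \alpha^N\mu_0$, and one needs the local product structure (Remark~\ref{r.distance}, last item, applied along $W^u$ and $W^{cs}$) to convert the ambient bound into a contradiction with the $\rho$ lower bound — i.e. if two points on the same unstable plaque are ambient-close then their unstable distance is small, contradicting $\ge\rho$ once $N$ is large, which forces $N=\infty$ and hence $w_1=w_2$ since a uniformly expanded pair can never stay bounded. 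For the quantitative second bullet I would simply track that starting from $d(x,y)=\mu \le \mu_0/C$, the intersection point produced by Lemma~\ref{dc_lemma1a} already lies in $W^u_{C\mu}(y)\cap\bigcup_{z\in W^c_{C\mu}(x)}W^s_{C\mu}(z)$ by the conclusion of that lemma, and uniqueness within the larger ball $\mu_0$ (just proved) a fortiori gives uniqueness within the $C\mu$ ball.
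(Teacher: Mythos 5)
Your existence half and the reduction of the second bullet to the first are fine and match the paper. The uniqueness argument, however, has a genuine gap. Your intended contradiction is between exponential growth of $d^u(f^nw_1,f^nw_2)$ and boundedness of the ambient distance $d(f^nw_1,f^nw_2)$, but these are not in conflict: unstable leaves are non-compact immersed submanifolds, and two points on the same unstable leaf can be at enormous leafwise distance while ambient-close, so a uniformly expanded pair \emph{can} stay bounded in $M$. Your proposed fix (stop at the first time $N$ with $d^u(f^Nw_1,f^Nw_2)\geq\rho$ and convert the ambient bound into a small unstable distance) does not close the gap, because your ambient bound at time $N$ is $2\alpha^N\mu_0$ \emph{plus the center distance} $d^c(f^Nz_1,f^Nz_2)$, and that center term is not small --- it can grow like $\lambda^N$ up to the diameter of the center leaf. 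So at time $N$ the two points may be ambient-far, and no local product structure can force $d^u<\rho$. Relatedly, your condition (c) --- that two nearby points of $\bigcup_{z\in W^c_{\mu_0}(x)}W^s_{\mu_0}(z)$ on the same local unstable manifold coincide --- is essentially the statement being proved; the paper explicitly warns that this fails for general continuous transverse foliations and only holds here for dynamical reasons, so it cannot be arranged merely by taking $\mu_0$ small.

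The paper's proof repairs exactly this point by tracking the center distance alongside the unstable one: it negates the statement as ``for every $\mu$ there is a bad pair at scale $\mu$,'' iterates each bad pair until the \emph{sum} $d^c(f^n z_1,f^n z_2)+d^u(f^n w_1,f^n w_2)$ first enters a fixed window $[\lambda^{-1}\mu_*,\mu_*]$, and then lets $\mu\to 0$ so that the stable discrepancies $d^s(f^{n_\mu}z_i,f^{n_\mu}w_i)\leq\alpha^{n_\mu}\mu$ vanish in the limit. The limit points then lie simultaneously in the same center leaf and the same unstable leaf at distance less than $\mu_*$ in each, forcing equality by $W^u_{\mu_*}\cap W^c_{\mu_*}=\{pt\}$, while the window gives a quantitative lower bound $d(z_1^\infty,z_2^\infty)\geq\mu_*/(4\lambda)$ --- the contradiction. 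Two further remarks: the limiting structure is why one must negate over all $\mu$ rather than argue at a fixed $\mu_0$ as you do; and the lemma assumes only an invariant center foliation, so your appeal to uniform compactness of $\mathcal{F}^c$ for the bounded-diameter step uses a hypothesis not available here (the paper instead uses only the comparison between plaque distance and ambient distance from the continuity of $E^c$).
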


\begin{figure}
\includegraphics[width=0.5\textwidth]{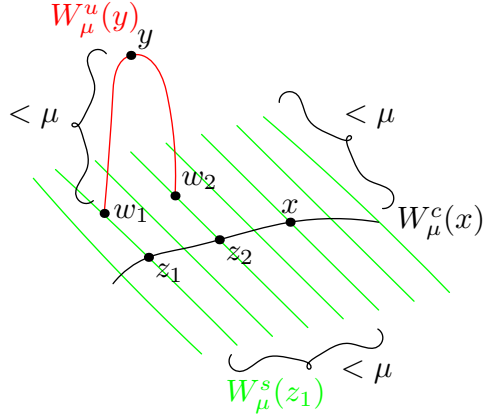}
\caption{Assume: for any $\mu > 0$ there are points $x,y$ with $d(x,y)<\mu$ such that $W^u_{\mu}(y)$ cuts $\bigcup_{z \in W^c_{\mu}(x)}W^s_{\mu}(z)$ in two distinct points $w_1$ and $w_2$.} 
\label{fig:intersection}
\end{figure}

\begin{proof} 

One argues by contradiction assuming that, for every $\mu~>~0$  there is $x,y$ at a distance $d(x,y) < \mu$ such 
that $W^u_{\mu}(y)$ cuts $\bigcup_{z \in W^c_{\mu}(x)}W^s_{\mu}(z)$ in two distinct 
points, see Figure~\ref{fig:intersection}. According to Remark~\ref{r.distance} there is  $\mu_*~>~0$ so that: 
\begin{itemize}
 \item if two points $p,q$ 
in the same center leaf are at distance less than $\mu_*$ in the manifold, but at 
distance more than $2d(p,q)$ in  the center leaf, then they are at distance larger 
than $8\lambda^2\mu_*$ in the center leaf where $\lambda~>~1$ is a upper bound for $\|
Df\|$, i.e. for any 
\begin{equation}\label{e.property1}
p,q \in M\,\mbox{with}\, d(p,q)<\mu_*\,\mbox{and}\,d^c(p,q)>2d(p,q)\; \Rightarrow\; d^c(p,q) > 8\lambda^2\mu_*;
\end{equation}
\item if two points $p,q$ 
in the same unstable leaf are at distance less than $\mu_*$ in the manifold, but at 
distance more than $2d(p,q)$ in  the unstable leaf, then they are at distance larger 
than $8\lambda^2\mu_*$ in the unstable leaf, i.e. for any
\begin{equation}\label{e.property2}
p,q \in M\,\mbox{with} d(p,q)<\mu_*\,\mbox{and}\,d^u(p,q)>2d(p,q)\; \Rightarrow\; d^u(p,q) > 8\lambda^2\mu_*;
\end{equation}
\item for every point $x\in M$  one has $W^u_{\mu_*}(x)\cap W^c_{\mu_*}
(x)=\{x\}$.
\end{itemize}

Consider now $\mu~<~\mu_*/8$. According to our assumption, there are $x=x(\mu),y=y(\mu)$ such that $W^u_{\mu}(y)$ cut 
$\bigcup_{z \in W^c_{\mu}(x)}W^s_{\mu}(z)$ in two points $w_1, w_2$ with $w_i~\in~W^s_\mu(z_i)$, 
$z_i~\in~W^c_\mu(x)$.  As $z_1,z_2~\in~W^c(x)$ one gets  
$d^c(z_1,z_2)~<~2\mu~<~\mu_*$.

To get uniqueness, let $0~<~\alpha~<~1~<~\beta$ such that vectors in the stable bundle are contracted by 
$\alpha$ and vectors in the unstable bundle are expanded by $\beta$. 

Consider the iterates  $f^n(x),f^n(y),f^n(w_1),f^n(w_2),f^n(z_1),f^n(z_2)$ for $n~>~0$: 
\begin{itemize}
\item The stable distance $d^s(z_i,w_i)$ is contracted by at least $\alpha$ at 
each iteration: that is $$d^s(f^n(w_i),f^n(z_i))~\leq~\alpha^n 
d^s(w_i,z_i)~<~\alpha^n\mu~<~\mu_*/8 \;\mbox{for}\;i=1,2;$$
\item The unstable distance  $d^u(w_1,w_2)$ is expanded by at least $\beta$ but 
at most  $\lambda$ at each iteration: that is $$\beta^n d^u(w_1,w_2)~\leq~d^u(f^n(w_1),f^n(w_2))~\leq~\lambda^n d^u(w_1,w_2);$$
\item The center distance $d^c(z_1,z_2)$ is at most expanded by $\lambda$ at 
each iteration: $d^c(f^n(z_1),f^n(z_2))~\leq~\lambda^n d^c(z_1,z_2)$.
\end{itemize}

Therefore, there is $n_\mu~>~0$ so that 
\begin{equation}\label{e.zmu}
d^c(f^{n_\mu}(z_1),f^{n_\mu}(z_2))+d^u(f^{n_\mu}(w_1),f^{n_\mu}(w_2))\in [\lambda^{-1}\mu_*, \mu_*].
\end{equation} 
Let us denote $z_i(\mu)=f^{n_\mu}(z_i)$ and $w_i(\mu)=f^{n_\mu}(w_i)$.  Clearly, $n_\mu$ tends to 
$\infty$ when $\mu~\to~0$.  One deduces that $d^s(z_i(\mu),w_i(\mu))$ tends to $0$ when 
$\mu$ tends to $0$. 

We consider a sequence $\mu_k$ tending to $0$ so that the points 
$z_1(\mu_k),z_2(\mu_k),w_1(\mu_k),w_2(\mu_k)$ converge to points 
$z_1^\infty,z_2^\infty,w_1^\infty,w_2^\infty$.  As $d^s(z_i(\mu_k),w_i(\mu_k))$ tends 
to $0$ one gets $z_i^\infty=w_i^\infty$ for $i=1,2$.

As $d^u(w_1(\mu_k), w_2(\mu_k))~<~\mu_*$ and $d^c(z_1(\mu_k), z_2(\mu_k))~<~\mu_*$, one 
deduces that $z_1^\infty$($= w_1^\infty$) 
and  $z_2^\infty$($= w_2^\infty$) belong to the same unstable leaf and to the same 
center leaf and one has 
$$ d^u(z_1^\infty,z_2^\infty)<\mu_* \mbox{ and  }  d^c(z_1^\infty,z_2^\infty)<\mu_*.$$

In other words, $z_2^\infty~\in~W^u_{\mu_*}(z_1^\infty)\cap W^c_{\mu_*}(z_1^\infty)$. The choice of $\mu_*$ 
implies that 
\begin{equation}\label{e.equal} z_1^\infty=z_2^\infty.
\end{equation}
  
Therefore we conclude the proof of uniqueness 
of the intersection point by proving:
\begin{claim} $z_1^\infty\neq z_2^\infty$.  More precisely 
$d(z_1^\infty,z_2^\infty)~\geq~\frac{\mu_*}{4\lambda}$.
\end{claim}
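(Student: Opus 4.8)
The plan is to derive a contradiction from the earlier conclusion $z_1^\infty = z_2^\infty$ (equation~\eqref{e.equal}) by showing that the pair $(z_1^\infty, z_2^\infty)$ inherits a definite minimal separation coming from the normalization~\eqref{e.zmu}. Recall that by construction $d^c(z_1(\mu_k),z_2(\mu_k)) + d^u(w_1(\mu_k),w_2(\mu_k)) \in [\lambda^{-1}\mu_*, \mu_*]$, and $d^s(z_i(\mu_k),w_i(\mu_k)) \to 0$. First I would pass the inequalities in~\eqref{e.zmu} to the limit: since $z_i(\mu_k) \to z_i^\infty$ and $w_i(\mu_k) \to w_i^\infty = z_i^\infty$, continuity of $d^c$ and $d^u$ on the relevant plaques gives $d^c(z_1^\infty,z_2^\infty) + d^u(z_1^\infty,z_2^\infty) \geq \lambda^{-1}\mu_*$, where these intraleaf distances make sense because in the limit the points lie in a common center leaf and a common unstable leaf respectively, and $\mu_*$ is small enough that the distances stay below the plaque-size threshold $\Delta^c, \Delta^u$ of Remark~\ref{r.distance}.

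The next step is to control the blow-up of intraleaf distance under one backward iterate, in order to convert the lower bound on $d^c + d^u$ at time $n_{\mu_k}$ into a lower bound on $d$ at time $n_{\mu_k}$, and hence at time $0$. The cleaner route, however, is to work directly at the limit: from $d^c(z_1^\infty,z_2^\infty) + d^u(z_1^\infty,z_2^\infty) \geq \lambda^{-1}\mu_*$ and the fact that both terms are realized by points in plaques where intraleaf distance is comparable to ambient distance (choosing $\mu_*$ small using Remark~\ref{r.distance}, with constant close to $1$), one gets $d(z_1^\infty, w_2^\infty \text{ via the center leaf of } z_1^\infty) $ plus $d(z_1^\infty, z_2^\infty \text{ via the unstable leaf})$ bounded below, so at least one of the ambient distances $d^c(z_1^\infty,z_2^\infty)$, $d^u(z_1^\infty,z_2^\infty)$ is at least $\frac{\mu_*}{2\lambda}$ up to the comparability constant, whence $d(z_1^\infty, z_2^\infty) \geq \frac{\mu_*}{4\lambda}$. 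This contradicts~\eqref{e.equal}, and the contradiction proves the uniqueness claimed in the first item of Lemma~\ref{dc_lemma1}.

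For the second item, I would not argue by contradiction but track constants quantitatively: given $\mu = d(x,y) \leq \mu_0/C$, Lemma~\ref{dc_lemma1a} furnishes an intersection point $w$ in $W^u_{C\mu}(y) \cap \bigcup_{z \in W^c_{C\mu}(x)} W^s_{C\mu}(z)$, and the uniqueness just established (applied with $\mu_0$ playing the role of the ambient scale, which is legitimate since $C\mu \leq \mu_0$) shows this $w$ is the only such point even in the larger $\mu_0$-sized neighborhoods. One then just has to check the scales are consistent, i.e. that $C\mu \leq \mu_0$ guarantees all the plaques involved stay within the domains where Lemma~\ref{dc_lemma1a} and the uniqueness argument apply, which is automatic from the choice of $\mu_0$ relative to the $\mu_*$ fixed above.

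The main obstacle I anticipate is the passage to the limit for the intraleaf distances $d^c$ and $d^u$: a priori these are only lower semicontinuous or defined leaf-by-leaf, and one must be careful that, along the convergent subsequence $\mu_k$, the points $z_1(\mu_k), z_2(\mu_k)$ (resp. $w_1(\mu_k), w_2(\mu_k)$) not only converge in $M$ but do so within a fixed foliated chart so that $d^c$ (resp. $d^u$) behaves continuously. This is exactly where the choice of $\mu_* < \mu_*/8 > \mu$ forcing all these points into a single plaque is used, together with the comparability of $d^c$ (resp. $d^u$) with the ambient distance $d$ on plaques, from the first bullet of Remark~\ref{r.distance}; getting the explicit constant $\frac{\mu_*}{4\lambda}$ out at the end is then just bookkeeping with the factor $\lambda^{-1}$ from~\eqref{e.zmu} and the comparability constant, which we may take as close to $1$ as we wish by shrinking $\mu_*$.
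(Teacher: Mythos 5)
Your argument is correct and is essentially the paper's: both rest on the normalization (\ref{e.zmu}) together with the comparability of leafwise and ambient distances built into the choice of $\mu_*$ (properties (\ref{e.property1}) and (\ref{e.property2})), yielding $2\,d(z_1^\infty,z_2^\infty)\geq \frac{\mu_*}{2\lambda}$. The only real difference is the order of operations: the paper converts $d^c$ and $d^u$ into lower bounds on the ambient distances $d(z_1(\mu_k),z_2(\mu_k))$ and $d(w_1(\mu_k),w_2(\mu_k))$ already at the finite stage and then passes the continuous ambient distance to the limit, which sidesteps the semicontinuity issue for leafwise distances that you correctly flag and then have to patch via the plaque structure.
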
 
\begin{proof}By property~(\ref{e.zmu}) for $z_i(\mu),w_i(\mu)$ one has 
$$d^c(z_1(\mu_n),z_2(\mu_n))+d^u(w_1(\mu_n),w_2(\mu_n))~\in~\left[\frac{\mu_*}\lambda,\mu_*\right].$$ 
By properties~(\ref{e.property1}) and (\ref{e.property2}) of $\mu_*$ one  has 
\begin{align*}
&d(z_1(\mu_n),z_2(\mu_n))\geq \frac12 d^c(z_1(\mu_n),z_2(\mu_n))\\
&d(w_1(\mu_n),w_2(\mu_n)) \geq \frac12 d^u(w_1(\mu_n),w_2(\mu_n))\;\mbox{so that} \\
&d(z_1(\mu_n),z_2(\mu_n))+d(w_1(\mu_n),w_2(\mu_n))\in\left[\frac{\mu_*}{2\lambda},\mu_*\right].
\end{align*}
As a consequence one gets the announced inequality: 
$2 d(z_1^\infty,z_2^\infty)~\geq~\frac{\mu_*}{2\lambda}$.
\end{proof}
The claim contradicts (\ref{e.equal}) so that our assumption that there exist such points $x(\mu)$, $y(\mu)$
for $\mu$ arbitrarily small is wrong. This proves the existence of $\mu_0>0$ 
satisfying the first claim of the lemma.

The second claim of Lemma~\ref{dc_lemma1} is now a straightforward consequence of the 
first claim (uniqueness) and Lemma~\ref{dc_lemma1a} (existence).   
\end{proof}

\begin{rem} The same argument given by Brin allows him to get a stronger result: he 
shows that, for $\mu$ small enough, $\bigcup_{z \in W^c_{\mu}(x)}W^s_{\mu}(z)$ is a 
submanifold tangent to $E^c\oplus E^s$.  The idea is that every disc tangent to a cone 
field around $E^u$ with an arbitrarily large angle may cut $\bigcup_{z \in W^c_{\mu}
(x)}W^s_{\mu}(z)$ in at most $1$ point; the proof of this fact is exactly our 
argument. We did not found any published reference for this interesting and useful 
result. 
\end{rem}

\subsection{Unstable projection  on  centerstable manifolds} 
We define in this section the unstable projection of points onto the center stable manifold. We start with the case of a center foliation with trivial holonomy.
\subsubsection{Center foliation with trivial holonomy}

\begin{lemma}
Let $f: M \rightarrow M$ be a partially hyperbolic $C^1$-diffeomorphism with an $f$-
invariant compact center foliation with trivial holonomy. Let $C>0$ be given by Lemma~\ref{dc_lemma1}. 

Then there exists $\mu > 0$ such that for any $x \in M$  and any leaf $L^c \in \cF^c$ with $d(x,L^c) < \mu/C$ the
intersection $W^u_{\mu}(x)~\cap~\left(\bigcup_{z \in L^c}W^s_{\mu}(z)\right)$ consists 
of a unique point. 
\label{dc_lemma2}
\end{lemma}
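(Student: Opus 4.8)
The plan is to reduce Lemma~\ref{dc_lemma2} (trivial holonomy case) to the already-proved Lemma~\ref{dc_lemma1}, using the compactness of the center leaf $L^c$ together with a covering/connectedness argument along the leaf. First I would fix $\mu_0>0$ and $C>0$ from Lemma~\ref{dc_lemma1}, and choose $\mu<\mu_0$ small. Given $x\in M$ with $d(x,L^c)<\mu/C$, pick $x_0\in L^c$ realizing (nearly) this distance, so $d(x,x_0)<\mu/C$. By Lemma~\ref{dc_lemma1} (second item, applied with the pair $x_0,x$ in either order — note $W^c_{\mu}(x_0)\subset L^c$ since the center leaf is uniquely the leaf through $x_0$), the set $W^u_{\mu}(x)\cap\bigcup_{z\in W^c_{\mu}(x_0)}W^s_{\mu}(z)$ consists of exactly one point $w$. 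So existence and local uniqueness near $x_0$ are immediate; the real content is that \emph{no other} point of $W^u_\mu(x)$ lies on a stable leaf through a \emph{far-away} point of $L^c$.

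For that I would argue by connectedness along $L^c$. Suppose $w'\in W^u_\mu(x)\cap W^s_\mu(z')$ with $z'\in L^c$. Consider a path $\gamma$ in the (compact, hence bounded-diameter) leaf $L^c$ from $x_0$ to $z'$, and cover it by finitely many foliation charts; the bound on the number of charts is uniform by Lemma~\ref{l.diameter}/uniform compactness. Along this path one wants to say that the ``local center-stable manifold through $W^c(x)$'', i.e. $\bigcup_{z\in L^c, \text{near }\gamma}W^s_\mu(z)$, meets $W^u_\mu(x)$ in a set that is locally a single point and varies continuously, so by a continuation/monodromy argument the intersection point stays equal to $w$ all along $\gamma$; since the holonomy is trivial, there is no obstruction to this continuation closing up, forcing $w'=w$. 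Concretely: the local uniqueness from Lemma~\ref{dc_lemma1} shows the intersection point is locally unique and depends continuously on the base point $z$ in $L^c$; triviality of holonomy means this locally-defined continuous section of ``the $W^u_\mu(x)$-projection onto stable leaves'' is globally well defined on all of $L^c$, hence constant equal to $w$. Thus $w'=w$ and $z'=z$, giving global uniqueness. One should also keep all the relevant distances below $\mu_0$ by taking $\mu$ small relative to the diameter bound of center leaves and the contraction/expansion constants, exactly as in the proof of Lemma~\ref{dc_lemma1}.

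The main obstacle I expect is making the ``continuation along $\gamma$'' rigorous: one must show that the local center-stable set $\bigcup_{z}W^s_\mu(z)$ over a chart-sized piece of $L^c$ is an embedded disk transverse to $\cF^u$ (so that $W^u_\mu(x)$ hits it in at most one point, which is the strengthening of Lemma~\ref{dc_lemma1} mentioned in the remark after it), and that these local disks glue coherently along $L^c$ precisely because the holonomy is trivial. The trivial-holonomy hypothesis is doing exactly the work of ruling out that going around a loop in $L^c$ produces a different stable leaf; without it the continuation could fail to close up, which is why the general (finite holonomy) case is deferred and handled via the holonomy covers of Section~\ref{s:covers}. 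A secondary, more routine point is bookkeeping the constants so that after at most $K$ (uniformly bounded) chart transitions all points still lie in the $\mu_0$-balls where Lemma~\ref{dc_lemma1} applies; this forces $\mu$ to be chosen small in terms of $K$, the metric distortion in charts, and $\mu_0$, but involves no new idea.
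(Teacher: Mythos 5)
There is a genuine gap in your uniqueness argument, and it sits exactly where you place the main work. Your continuation/monodromy along a path $\gamma$ in $L^c$ from $x_0$ to $z'$ is not well-founded: for intermediate points $\gamma(t)$ that are far from $x$ in the ambient manifold, the set $W^u_{\mu}(x)\cap\bigcup_{z\,\text{near}\,\gamma(t)}W^s_{\mu}(z)$ is simply empty (Lemma~\ref{dc_lemma1a} gives existence only when the base points are within distance $C\delta$ of $x$), so there is no continuously varying intersection point to track along $\gamma$. Equivalently, the set $S=\{z\in L^c : W^u_\mu(x)\cap W^s_\mu(z)\neq\emptyset\}$ on which your ``section'' is defined has no reason to be connected, so a monodromy argument cannot rule out a second intersection point lying over a different component of $S$. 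The local embedded-disk strengthening you invoke (the Brin remark after Lemma~\ref{dc_lemma1}) concerns the center-stable disk over a \emph{small} center plaque, not over the whole compact leaf, so it does not rescue the gluing step either.

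The observation you are missing — and which makes the proof a few lines — is that two intersection points are automatically \emph{ambient-close at the base}: if $w_i\in W^u_\mu(x)\cap W^s_\mu(z_i)$ with $z_i\in L^c$, $i=1,2$, then $d(w_1,w_2)\leq 2\mu$ inside $W^u_\mu(x)$ and $d(w_i,z_i)\leq\mu$, hence $d(z_1,z_2)\leq 4\mu$ in $M$. The only role of trivial holonomy is then to upgrade this ambient closeness to leaf closeness: by the second item of Remark~\ref{r.distance}, for a compact foliation without holonomy, $d(z_1,z_2)<4\mu$ forces $z_1$ and $z_2$ into the same plaque with $d^c(z_1,z_2)<5\mu$. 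Hence both $w_1,w_2$ lie in $W^u_\mu(x)\cap\bigcup_{z\in W^c_{5\mu}(z_1)}W^s_\mu(z)$, and taking $5\mu<\mu_0$ the uniqueness statement of Lemma~\ref{dc_lemma1} gives $w_1=w_2$. So the ``far-away base point'' scenario you try to exclude by continuation cannot occur in the first place; you correctly sense that trivial holonomy is what prevents a loop in $L^c$ from producing a second stable leaf, but the hypothesis is used as a metric statement (ambient-close points of a leaf are plaque-close), not as the vanishing of a monodromy obstruction.
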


\begin{proof}

The proof follows directly from 
Lemma~\ref{dc_lemma1}. Let us prove the uniqueness.  Consider $\mu_0~>~0$  given by Lemma~\ref{dc_lemma1}.   
Consider $\mu~<~\frac{\mu_0}{4C}$, and assume that the intersection $W^u_{\mu}
(x)\cap\left(\bigcup_{z \in L^c}W^s_{\mu}(z)\right)$ contains two points $w_i~\in~W^s_\mu(z_i)$, 
$z_i~\in~L^c$, $i=1,2$.  Then the distance $d(z_1,z_2)$ is bounded by 
$4\mu$.  Assume now that $\mu$ is small enough, according to Remark~\ref{r.distance} 
so that $d(z_1,z_2)~<~4\mu$ implies that $d^c(z_1,z_2)~<~5\mu$.  Therefore $w_1,w_2$ 
belongs to $W^u_{\mu}(x)\cap\left(\bigcup_{z \in W^c_{5\mu}}W^s_{\mu}(z)\right)$. Thus, if $5\mu~<~\mu_0$, one deduces $w_1=w_2$ which concludes the proof. 
\end{proof}

\begin{rem}\label{dc_rem}
One easily checks that 
\begin{itemize}
\item the map $(x,L^c)~\mapsto~w= W^u_{\mu}(x)~\cap~\bigcup_{z\in L^c} W^s_{\mu}
(z)$ is continuous, for $d(x,L^c)~<~\frac{\mu}C$ and $\mu$ given by Lemma~\ref{dc_lemma2} above. 
\item the unstable distance $d^u(x,w)$ is bounded by $Cd(x,L^c)$.  
\end{itemize}
\end{rem}

\subsubsection{Unstable projection on the stable manifold of a center leaf in the 
holonomy cover}

We assume now that $f$ is a partially hyperbolic diffeomorphism with a uniformly 
compact center foliation $\cF^c$.  We denote by $\cV=\{V_i\}$ a holonomy cover of 
$\cF^c$. 

\begin{lemma}Let $f:M~\rightarrow~M$ be a partially hyperbolic $C^1$-diffeomorphism 
with an $f$-invariant uniformly compact center foliation, and let $\cV=\{V_i\}$ be a 
holonomy cover and $\delta$ associated to $\cV$ by Lemma~\ref{l.Lebesgue}.

Then there exists $\mu_1~>~0$ such that
\begin{itemize}
\item  for every leaf $L^c$ of $\cF^c$ and all $i$ so that the $\delta$-neighborhood 
of $L^c$ is contained in $V_i$;
\item for every lift $\tilde L^c$ of $L^c$ on $\tilde V_i$
\item for every $\tilde x\in \tilde V_i$ with $d(\tilde x,\tilde L^c)<\mu_1/C$ (where $C$ is given by Lemma~\ref{dc_lemma1a})
\end{itemize}

the intersection 
$W^u_{\mu_1}(\tilde x)\cap\left(\bigcup_{z \in \tilde L^c}W^s_{\mu_1}(\tilde 
z)\right)$ 
consists of a unique point. 
\label{dc_lemma2a}
\end{lemma}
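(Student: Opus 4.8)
The plan is to imitate the proof of Lemma~\ref{dc_lemma2}, but carried out inside the holonomy cover $\tilde V_i$ rather than on $M$, using that Lemma~\ref{dc_lemma1} has \emph{already} been proved on $M$. First I would recall the two features that make this work: on the interior of $\tilde V_i$, equipped with the lifted metric, $p_i$ is a local isometry, and the lifted center foliation $\tilde\cF^c$ is uniformly compact \emph{with trivial holonomy}; consequently, for small radius and away from the boundary of $\tilde V_i$, local stable, center and unstable manifolds and small metric balls lift and project isometrically through $p_i$, and $p_i$ is injective on such balls. Then I would fix $\mu_0>0$ from Lemma~\ref{dc_lemma1}, $C$ from Lemma~\ref{dc_lemma1a}, and the modulus of continuity from Remark~\ref{r.distance}, and choose $\mu_1>0$ small enough (in terms of $\mu_0$, $\delta=\delta(\cV)$ from Lemma~\ref{l.Lebesgue}, $C$ and that modulus) so that $5\mu_1<\mu_0$ and every ball of radius $5\mu_1$ centered within $2\mu_1$ of $\tilde x$ lies in a single sheet of $p_i$.

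For existence I would pick $\tilde z\in\tilde L^c$ with $d(\tilde x,\tilde z)<\mu_1/C$, set $x=p_i(\tilde x)$ and $z=p_i(\tilde z)\in L^c$, and apply the second item of Lemma~\ref{dc_lemma1} (legitimate since $d(x,z)<\mu_0/C$) to produce $w\in W^u_{\mu_1}(x)\cap W^s_{\mu_1}(z')$ with $z'\in W^c_{\mu_1}(z)\subset L^c$; lifting $w$ along the local unstable manifold of $\tilde x$ gives $\tilde w\in W^u_{\mu_1}(\tilde x)$, and lifting the stable path from $w$ to $z'$ starting at $\tilde w$ together with the center path from $z$ to $z'$ starting at $\tilde z$ (all inside one sheet) identifies their endpoints as a single point $\tilde z'\in\tilde L^c$ with $\tilde w\in W^s_{\mu_1}(\tilde z')$. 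For uniqueness I would take $\tilde w_1,\tilde w_2\in W^u_{\mu_1}(\tilde x)\cap\bigcup_{\tilde z\in\tilde L^c}W^s_{\mu_1}(\tilde z)$, say $\tilde w_k\in W^s_{\mu_1}(\tilde z_k)$ with $\tilde z_k\in\tilde L^c$; then $d(\tilde z_1,\tilde z_2)\le 4\mu_1$, and — this is the crucial use of trivial holonomy of $\tilde\cF^c$ — the trivial-holonomy form of Remark~\ref{r.distance} upgrades this to $d^c(\tilde z_1,\tilde z_2)\le 5\mu_1$. Projecting by the local isometry $p_i$, the points $z_k=p_i(\tilde z_k)\in L^c$ satisfy $d^c(z_1,z_2)\le 5\mu_1$ while $w_k=p_i(\tilde w_k)\in W^u_{\mu_1}(x)\cap W^s_{\mu_1}(z_k)$ with $z_1,z_2\in W^c_{5\mu_1}(z_1)$; since $d(x,z_1)\le 2\mu_1<\mu_0$ and $5\mu_1<\mu_0$, the uniqueness clause of Lemma~\ref{dc_lemma1} forces $w_1=w_2$, and then injectivity of $p_i$ on $B(\tilde x,\mu_1)$ yields $\tilde w_1=\tilde w_2$.

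The only step that genuinely needs the holonomy cover — everything else being the argument of Lemma~\ref{dc_lemma2} — is the passage, in the uniqueness part, from the ambient estimate $d(\tilde z_1,\tilde z_2)\le 4\mu_1$ to the leafwise estimate $d^c(\tilde z_1,\tilde z_2)\le 5\mu_1$: this fails on $M$ precisely when $\cF^c$ has holonomy, since two nearby points of a leaf may then be leafwise far apart, but it holds on $\tilde V_i$ because $\tilde\cF^c$ is holonomy-free. The remaining work is the bookkeeping already met in Section~\ref{s:covers}: choosing $\mu_1$ small enough that all the relevant stable, center and unstable segments and balls stay inside one sheet of the finite cover, so that lifting $w$ upward and projecting $\tilde w_1,\tilde w_2$ downward are both injective, i.e. controlling the boundary effects flagged after Lemma~\ref{lemma_distances}.
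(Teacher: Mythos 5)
Your proposal is correct and follows essentially the same route as the paper: the paper's proof consists precisely of repeating the argument of Lemma~\ref{dc_lemma2} on the holonomy cover, replacing Remark~\ref{r.distance} by its analogue for the lifted (holonomy-free) foliation $\tilde\cF^c$, which is exactly the step you identify as the crucial one (upgrading $d(\tilde z_1,\tilde z_2)\le 4\mu_1$ to $d^c(\tilde z_1,\tilde z_2)\le 5\mu_1$ before invoking the uniqueness clause of Lemma~\ref{dc_lemma1}). Your additional bookkeeping — projecting by the local isometry $p_i$ to apply Lemmas~\ref{dc_lemma1a} and~\ref{dc_lemma1} downstairs and lifting back within a single sheet — is just an explicit rendering of what the paper leaves implicit.
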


The proof is identical to the proof of Lemma~\ref{dc_lemma2} just replacing the use of 
Remark~\ref{r.distance} by the following remark which is an analogous statement on the 
holonomy cover: 

\begin{rem} For any $\eta~>~0$  there is $\mu~>~0$ so that, for any center leaf $L$ and 
any $i$ so that $B(L,\delta)~\subset~V_i$,  for any lift $\tilde L$ in $\tilde V_i$, 
for any  $\tilde x,\tilde y~\in~\tilde L$ one has   
$$d(\tilde x,\tilde y)<\mu\Longrightarrow d^c(\tilde x,\tilde y)<(1+\eta)d(\tilde x,
\tilde y).$$
\end{rem}

\subsubsection{Projection distance}

\begin{lemma}\label{l.proj} There is $\mu_2~>~0$ so that for any given pair of points $x,y~\in~M$ with 
$d(x,y)~<~\mu_2$, for any given $i$ so that $B_H(W^c(x),\delta)~\subset~V_i$, for any given lift 
$\tilde x~\in~\tilde V_i$ the following statements hold: 
\begin{itemize}
\item There is a unique lift $\tilde y~\in~\tilde V_i$ so that 
$d(\tilde x,\tilde y)=d(x,y)$.
\item For any $\tilde w~\in~W^c(\tilde y)$ there is a unique point 
$$\Pi_{\tilde x,\tilde y}(w) \in W^u_{\mu_1}(\tilde w)\cap W^s_{\mu_1}(W^c(\tilde x)).
$$ 
\item The map $w~\mapsto~\Pi_{\tilde{x},\tilde{y}}(w)$ is continuous. One denotes by 
$$\De^u_{proj}(\tilde x,\tilde y)= \sup_{\tilde w\in W^c(\tilde y) }d^u(\tilde w,\Pi_{\tilde x,
\tilde y}(w))$$ the \emph{unstable projection distance}.
\item The distance $\De^u_{proj}(\tilde x,\tilde y)$  depends neither on the lift 
$\tilde x~\in~\tilde V_i$ nor on $i$ so that $B_H(W^c(x),\delta)~\subset~V_i$ 
(hence, it depends only on $x,y$).  We denote $\De^u_{proj}(x,y)=\De^u_{proj}(\tilde x,\tilde y).$

\end{itemize}
\end{lemma}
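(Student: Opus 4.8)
The plan is to derive Lemma~\ref{l.proj} as a more or less direct consequence of the preceding structure, organizing the argument around the four bullet points in order, since each one builds on the previous.

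\textbf{Step 1 (existence and uniqueness of the lift $\tilde y$).} First I would choose $\mu_2$ small enough that $\mu_2 < \delta_1 < \frac14\delta$, where $\delta=\delta(\cV)$ comes from Lemma~\ref{l.Lebesgue}. If $d(x,y)<\mu_2<\delta$, then $y$ lies in the $\delta$-ball around $x$, which by Lemma~\ref{l.Lebesgue} (or by the choice $B_H(W^c(x),\delta)\subset V_i$) sits in the interior of $V_i$ where $p_i$ restricts to an isometry on each connected component of the preimage of $B(x,\delta)$. Thus there is exactly one lift $\tilde y$ of $y$ inside $B(\tilde x,\delta)$, and for this lift $p_i$ being a local isometry on that ball gives $d(\tilde x,\tilde y)=d(x,y)$; uniqueness is immediate from the fact that any other lift lands in a different sheet, hence at distance bounded below by the Lebesgue-type constant. (I would make $\mu_2$ smaller than this lower bound.)

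\textbf{Step 2 (the projection $\Pi_{\tilde x,\tilde y}$).} Let $\mu_1$ be the constant of Lemma~\ref{dc_lemma2a}. I would shrink $\mu_2$ so that $C\mu_2 < \mu_1/C$ and also so that, by the boundedness of leaf diameters on the covers (Lemma~\ref{l.diameter}), every point $\tilde w\in W^c(\tilde y)$ still satisfies $d(\tilde w,W^c(\tilde x))<\mu_1/C$: this uses that $d(\tilde x,\tilde y)=d(x,y)<\mu_2$ together with the uniform bound on the diameter of $W^c(\tilde y)$ and the fact that $W^c(\tilde x)$ and $W^c(\tilde y)$ are Hausdorff-close on the cover (Lemma~\ref{lemma_distances}, second item, applied with $\delta_0$ small). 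Then Lemma~\ref{dc_lemma2a} applies verbatim with $L^c=W^c(\tilde x)$ and gives, for each $\tilde w$, a unique point $\Pi_{\tilde x,\tilde y}(w)\in W^u_{\mu_1}(\tilde w)\cap W^s_{\mu_1}(W^c(\tilde x))$. Continuity of $w\mapsto\Pi_{\tilde x,\tilde y}(w)$ follows from the continuity statement in Remark~\ref{dc_rem} (which holds on the cover since $p_i$ is a local isometry), and $W^c(\tilde y)$ being compact then makes the supremum defining $\De^u_{proj}(\tilde x,\tilde y)$ finite and actually attained.

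\textbf{Step 3 (independence of the lift and of $i$) — the main obstacle.} This is the heart of the lemma and where I expect the real work. I would invoke Lemma~\ref{lemma_independence_cover}: for two indices $i,j$ with $B_H(W^c(x),\delta)\subset V_i\cap V_j$ and lifts $\tilde x_1\in p_i^{-1}(x)$, $\tilde x_2\in p_j^{-1}(x)$, there is an isometry $I\colon B_H(\tilde L(x_1),\delta_0)\to B_H(\tilde L(x_2),\delta_0)$ conjugating the lifted foliations and satisfying $p_j\circ I = p_i$. The key points to check are: (a) the whole configuration $W^c(\tilde x)$, $W^c(\tilde y)$, and all the local stable/unstable manifolds $W^s_{\mu_1}$, $W^u_{\mu_1}$ appearing in the definition of $\Pi$ lie inside the domain $B_H(\tilde L(x_1),\delta_0)$ where $I$ is defined — this is why I chose $\mu_2$ so small and why $\delta_0$ was taken as in the construction of $\De_H$; (b) since $I$ is an isometry conjugating $\tilde\cF^c$ and (because $p_j\circ I=p_i$ and the stable/unstable foliations are canonically lifted from downstairs) conjugating also the lifted $\tilde\cF^s$ and $\tilde\cF^u$, it maps the unique intersection point $\Pi_{\tilde x_1,\tilde y_1}(w)$ to $\Pi_{\tilde x_2,\tilde y_2}(I(w))$; (c) $I$ being an isometry preserves $d^u$ along the leaves, so the supremum defining $\De^u_{proj}$ is the same on both sides. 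For the independence of the choice of lift $\tilde x$ within a single $\tilde V_i$, the covering $\tilde V_i\to V_i$ is normal (it is the holonomy cover, kernel of the holonomy representation), so the deck transformation group acts transitively on the fibers by isometries conjugating $\tilde\cF$, and the same argument applies with $I$ a deck transformation. The subtle part is bookkeeping: one must make sure the lift $\tilde y$ produced in Step 1 for $\tilde x_2$ is exactly $I(\tilde y)$ where $\tilde y$ is the one for $\tilde x_1$, which follows from uniqueness of the lift at distance $d(x,y)$ and from $I$ being a distance-preserving bijection fixing the fibers of $p$. Once all this is in place, $\De^u_{proj}(\tilde x,\tilde y)$ manifestly depends only on $x$ and $y$, and we may write $\De^u_{proj}(x,y)$.

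I would close by remarking that $\De^u_{proj}(x,y)\le C\,d(x,y)$, which is the quantitative bound that will be used in the dynamical-coherence argument: it follows from the second item of Remark~\ref{dc_rem} applied on the cover, since $d^u(\tilde w,\Pi_{\tilde x,\tilde y}(w))\le C\,d(\tilde w,W^c(\tilde x))$ and the latter is controlled by $d(\tilde x,\tilde y)=d(x,y)$ up to the bounded-diameter distortion of Lemma~\ref{l.diameter} and Lemma~\ref{lemma_distances}.
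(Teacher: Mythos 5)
Your proof is correct and follows essentially the same route as the paper's: the unique lift comes from the local isometry of Lemma~\ref{l.Lebesgue}, the intersection point from Lemma~\ref{dc_lemma2a} once $W^c(\tilde y)\subset B_H(W^c(\tilde x),\mu_1/C)$ is arranged via Lemma~\ref{lemma_distances}, continuity from compactness and uniqueness, and independence of the lift and of $i$ from the isometries of Lemma~\ref{lemma_independence_cover}. The only imprecision is in your closing aside: the quantitative bound is $\De^u_{proj}(x,y)\leq C\,d_H(W^c(\tilde x),W^c(\tilde y))$ in terms of the Hausdorff distance of the lifted leaves (as in Lemma~\ref{l.metric}), not $C\,d(x,y)$, since the supremum runs over all $\tilde w\in W^c(\tilde y)$ and $d(\tilde w,W^c(\tilde x))$ is controlled only by that Hausdorff distance.
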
 
\begin{proof} The uniqueness  of the lift $\tilde y$ nearby $\tilde x$ comes from the 
fact that the lift is a local isometry on $B(\tilde x,\delta)$.

For the existence and uniqueness of the intersection point  $W^u_\mu(\tilde w))\cap 
W^s_\mu(W^c(\tilde x))$, $\tilde w~\in~W^c(\tilde y)$, it suffices to choose $\mu$ 
small enough so that $W^c(\tilde y)~\subset~B_H(W^c(\tilde x),\mu_1/C)$ where $\mu_1$ and $C$ are 
given by Lemma~\ref{dc_lemma2a}. 

The continuity of $\De^u_{proj}(\tilde x,\tilde y)$ comes from the compactness of $W^u_{\mu_1}
(W^c(\tilde y))\cap W^s_{\mu_1}(W^c(\tilde x))$ together with the existence and  
uniqueness  of the intersection point $W^u_{\mu_1}(\tilde w)\cap W^s_{\mu_1}
(W^c(\tilde x))$, $\tilde w~\in~W^c(\tilde y)$.

The fact that $\De^u_{proj}(x,y)$ does not depend on the lifts comes from 
Lemma~\ref{lemma_independence_cover}: the $\delta_0$-neighborhoods of the lifts of 
$W^c(x)$ on $\tilde V_i$, for $B_H(W^c(c),\delta)~\subset~V_i$ are all isometric by 
isometries conjugating the lifted foliations. 

\end{proof}

\begin{rem}\label{r.proj}
\begin{itemize}
\item The Lemma~\ref{l.proj} above assures also the existence of a unique intersection point of 
the local stable manifold $W^s_{\mu_1}(\tilde{w})$ for any $\tilde{w}~\in~W^c(\tilde{y})$ with 
the local center unstable manifold $W^u_{\mu_1}(W^c(\tilde{x}))$ for any two nearby points 
$x,y$ with $d(x,y)~<~\mu_2$ if lifted to a holonomy cover. We can therefore analogously define a 
\emph{stable projection distance} by $\Delta^s_{proj}(x,y)$.
\item For $x,y$ with $d(x,y)~<~\mu_2$ (for $\mu_2$ given by Lemma~\ref{l.proj} and $\mu_1$ given by Lemma~\ref{dc_lemma2a}) one has 
$$\De^u_{proj}(x,y)=0\Longleftrightarrow W^c(y)\subset W^s_{\mu_1}(W^c(x)).$$ 
As a consequence, the foliations $\cF^s$ and $\cF^c$ are jointly integrable (that is, 
there is a center stable foliation subfoliated by $\cF^s$ and $\cF^c$) if and only if  
there is $\mu~>~0$ so that for every $x,y$ with $y~\in~W^s_\mu(W^c(x))$ one has 
$\De^u_{proj}(x,y)=0$. The analogous statement holds for $\Delta^s_{proj}$ and the center unstable manifold. 
\item By construction it is $\De^u_{proj}(x,y)~<~\mu_1$ and $\Delta^s_{proj}(x,y)~<~\mu_1$.
\item The unstable projection distance $\Delta^u_{proj}(x,y)$ is not a distance: it is not symmetric and may not satisfy the 
triangle inequality.
\end{itemize}
\end{rem}

\subsubsection{The projection distance and the dynamics}
Recall that $\beta~>~1$ denotes the minimum expansion of $Df$ 
on the unstable bundle $E^u$. 

\begin{lemma}\label{l.Delta-expansion} There is $\mu~>~0$ so that for any given pair of points $x,y~\in~M$ 
with $d(x,y)~<~\mu$ and $d(f(x),f(y))~<~\mu$ one has $\De^u_{proj}(f(x),f(y))~\geq~\beta\De^u_{proj}(x,y)$.
\end{lemma}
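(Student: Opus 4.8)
The plan is to transport the whole configuration to holonomy covers, where the projection $\Pi$ is canonically defined (Lemma~\ref{l.proj}) and where $f$ lifts to a map rigid along the invariant bundles, and then to read off the inequality from the uniform expansion of $Df|_{E^u}$. Concretely, I would fix $\mu>0$ once and for all, small with respect to $\mu_2$, to $\delta$, and to $\mu_1/\lambda$, so that for $d(x,y)<\mu$ and $d(f(x),f(y))<\mu$ the projection distances $\De^u_{proj}(x,y)$ and $\De^u_{proj}(f(x),f(y))$ are both defined, and so that, by the analogue on the covers of the second item of Remark~\ref{dc_rem} (the unstable projection of $\tilde w\in W^c(\tilde y)$ onto $W^s_{\mu_1}(W^c(\tilde x))$ is at unstable distance $\le C\,d(\tilde w,W^c(\tilde x))\le C\,d_H(W^c(\tilde x),W^c(\tilde y))$, which is small when $d(x,y)$ is small), one has $\lambda\,\De^u_{proj}(x,y)<\mu_1$. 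Fix $i$ with $B_H(W^c(x),\delta)\subset V_i$ and $j$ with $B_H(W^c(f(x)),\delta)\subset V_j$.

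Next I would lift $f$ to the covers. Fix a lift $\tilde x$ of $x$ on $\tilde V_i$ and let $W^c(\tilde x)$ be the component of $p_i^{-1}(W^c(x))$ through it. Since $f$ is a diffeomorphism preserving $\cF^c$, it conjugates the holonomy group of $W^c(x)$ to that of $W^c(f(x))$; hence the covering-space lifting criterion, applied to the holonomy covers (which extend over tubular neighborhoods), provides a homeomorphism $\tilde f$ from a saturated neighborhood of $W^c(\tilde x)$ in $\tilde V_i$ onto a saturated neighborhood of its image in $\tilde V_j$, with $p_j\circ\tilde f=f\circ p_i$. As $p_i,p_j$ are local isometries, $\tilde f$ coincides locally with $p_j^{-1}\circ f\circ p_i$; therefore it maps the lifts of $\cF^c,\cF^s,\cF^u$ to themselves, contracts the lifted stable bundle, and satisfies $d^u(\tilde f(p),\tilde f(q))\ge\beta\,d^u(p,q)$ for $p,q$ on a common local unstable leaf at scales below $\mu_1$.

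Then comes the computation. Let $\tilde y$ be the lift of $y$ near $\tilde x$, and for $\tilde w\in W^c(\tilde y)$ put $\tilde\pi(\tilde w)=\Pi_{\tilde x,\tilde y}(w)\in W^u_{\mu_1}(\tilde w)\cap W^s_{\mu_1}(W^c(\tilde x))$. Applying $\tilde f$, the point $\tilde f(\tilde\pi(\tilde w))$ lies in $W^u(\tilde f(\tilde w))\cap W^s(W^c(\tilde f(\tilde x)))$, at unstable distance from $\tilde f(\tilde w)$ in the interval $[\beta\,d^u(\tilde w,\tilde\pi(\tilde w)),\,\lambda\,d^u(\tilde w,\tilde\pi(\tilde w))]\subset[0,\mu_1)$ (using $\lambda\,\De^u_{proj}(x,y)<\mu_1$) and at stable distance less than $\mu_1$ from $W^c(\tilde f(\tilde x))$. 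By the uniqueness of the intersection point on $\tilde V_j$ (Lemma~\ref{dc_lemma2a}) this forces $\tilde f(\tilde\pi(\tilde w))=\Pi_{\tilde f(\tilde x),\tilde f(\tilde y)}(\tilde f(w))$. Since $\tilde f$ maps $W^c(\tilde y)$ onto $W^c(\tilde f(\tilde y))$, taking the supremum over $\tilde w$ and recalling that $\De^u_{proj}$ is independent of the chosen lift and cover (Lemmata~\ref{l.proj} and~\ref{lemma_independence_cover}) yields $\De^u_{proj}(f(x),f(y))\ge\beta\,\De^u_{proj}(x,y)$.

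The main obstacle is not the expansion estimate itself, which is immediate once everything is lifted, but the compatibility bookkeeping: one must justify that $f$ genuinely lifts to a foliation-preserving homeomorphism between the two holonomy covers actually used to compute the two projection distances, and one must keep the expanded unstable arcs of length below $\mu_1$, since otherwise the uniqueness of Lemma~\ref{dc_lemma2a} no longer applies and the identification $\tilde f\circ\tilde\pi=\Pi_{\tilde f(\tilde x),\tilde f(\tilde y)}\circ\tilde f$ breaks down. Both points are handled by choosing $\mu$ small at the outset and by keeping the Hausdorff $\delta$-neighborhoods inside the cover elements, so as to stay away from their boundaries, exactly as in the preceding lemmas.
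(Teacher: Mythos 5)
Your argument is correct and, at its core, takes the same route as the paper's: lift everything to the holonomy covers, show that $f$ commutes with the unstable projection $\Pi$ there, and then read the inequality off the $\beta$-expansion of $Df|_{E^u}$ together with the fact that $p_i,p_j$ are local isometries and $\Delta^u_{proj}$ is independent of the chosen lift. The only real difference is how the commutation is established. You produce a global lift $\tilde f$ of $f$ between the two covers via the covering-space lifting criterion (which is legitimate: the saturated neighborhoods retract onto their core leaves, the covers correspond to the kernels of the holonomy representations, and $f$ conjugates the holonomy of $W^c(x)$ to that of $W^c(f(x))$), and then invoke the uniqueness in Lemma~\ref{dc_lemma2a} to identify $\tilde f(\Pi_{\tilde x,\tilde y}(\tilde w))$ with $\Pi_{\tilde f(\tilde x),\tilde f(\tilde y)}(\tilde f(\tilde w))$. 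The paper's Claim~\ref{c.projection} proves exactly the same identity, but by hand: it lifts the points $f(z_{\tilde w}), f(u_{\tilde w}), f(w)$ one at a time using explicit distance estimates below the isometry radius $\delta_0$, and then propagates the identification from $\tilde w=\tilde y$ to all of $W^c(\tilde y)$ by a path-continuation argument that again uses the conjugation of holonomy. Your packaging is cleaner, but note that the step your sketch compresses into "compatibility bookkeeping" is precisely where the paper's proof spends its effort: one must check that $\tilde f(\tilde y)$ coincides with the distinguished lift $\widetilde{f(y)}$ at distance $d(f(x),f(y))<\delta_0$ from $\widetilde{f(x)}$ that enters the very definition of $\Delta^u_{proj}(f(x),f(y))$ (the paper's verification that $\widehat{f(y)}=\widetilde{f(y)}$), and that the expanded unstable arcs stay below the uniqueness scale $\mu_1$; both are settled, as you say, by choosing $\mu$ small against $\lambda$, $C$ and $\delta_0$ at the outset.
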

\begin{proof}
Fix $\mu_2~>~0$ of Lemma~\ref{l.proj} and $C~>~0$ of Lemma~\ref{dc_lemma1a}. Choose $\mu~>~0$ smaller than $\mu_2$ and such that $\mu(1 + 7 \lambda C + 3 \lambda C^2)~<~\delta_0$ with $\delta_0~>~0$ of 
Lemma~\ref{lemma_independence_cover} assuring that the holonomy 
covering maps are local isometries in our setting. Recall that $\lambda~>~1$ is the upper bound of $\left\| Df\right\|$. \\ 
Let $x,y~\in~M$ with $d(x,y)~<~\mu$ and $d(f(x),f(y))~<~\mu$. Let $i,j$ such that 
$B_H(W^c(x),\delta_0)~\subset~V_i$ and  $B_H(W^c(f(x)),\delta_0)~\subset~V_j$. Then, for any lift
$\tilde{x}~\in~p^{-1}_i(x)$, Lemma~\ref{l.proj} implies the existence of a unique lift 
$\tilde{y}~\in~p_i^{-1}(y)~\subset~\tilde{V}_i$ such that 
$d(\tilde{x},\tilde{y})~=~d(x,y)$.

Lemma~\ref{l.proj} asserts that, for any 
$\tilde{w}~\in~W^c(\tilde{y})$ there are
\begin{itemize}
 \item a unique intersection point  $\tilde{z}_{\tilde w} = W^u_{\mu_1}(\tilde{w}) \cap W^s_{\mu_1}(W^c(\tilde{x}))$ and 
\item a unique point $\tilde u_{\tilde w}~\in~W^c(\tilde x)$ so that $\tilde z_{\tilde w}~\in~W^s_{\mu_1}(\tilde u_{\tilde w})$.
\end{itemize}
Because $p_i$ is a local isometry,  the projections $w=p_i(\tilde w)\in W^c(y)$ ,  $z_{\tilde w}=p_i(\tilde{z}_{\tilde w})$, 
and  $u_{\tilde w}=p_i(\tilde{u}_{\tilde w})$
satisfy:
$$d^u(w, z_{\tilde w})=d^u(\tilde w,\tilde z_{\tilde w}) \mbox{ and } d^s( z_{\tilde w}, u_{\tilde w})=d^s(\tilde z_{\tilde w}, \tilde u_{\tilde w}).$$

Analogously, for any $\widetilde{f(x)}~\in~p^{-1}_j(y)$ there exists a unique lift 
$\widetilde{f(y)}~\in~p_j^{-1}(f(y))~\subset~\tilde{V}_j$, and for any 
$\tilde{v}~\in~W^c(\widetilde{f(y)})$ there exist
\begin{itemize}
 \item a unique intersection point and
$\tilde{z}_{\tilde v} = W^u_{\mu_1}(\tilde{v}) \cap W^s_{\mu_1}(W^c(\widetilde{f(x)}))$. 
\item a unique point $\tilde u_{\tilde v}~\in~W^c(\widetilde{f(x)})$ whose local stable manifold contains $\tilde{z}_{\tilde v}$.
\end{itemize}
Once again we denote by $v~\in~W^c(f(y))$, $z_{\tilde v}$ and $u_{\tilde v}$ their projection by the local isometry $p_j$.

Recall that the projection distances are defined by 
$$\Delta^u_{proj}(x,y) =\sup_{\tilde w\in W^c(\tilde y)} \{d^u(\tilde{w},\tilde{z}_{\tilde w})\}
\mbox{ and } 
\Delta^u_{proj}(f(x),f(y)) =\sup_{\tilde v\in W^c(\widetilde{f(y)})} \{d^u(\tilde{v},\tilde{z}_{\tilde v})\}.
$$
Notice that, for any $\tilde w~\in~W^c(\tilde y)$, one has
$$d^u(f(w),f(z_{\tilde w}))\geq \beta d^u(w,z_{\tilde w}).$$
Using the fact that the projections $p_i$ and $p_j$ are local isometies,  we get that $\Delta^u_{proj}(f(x),f(y))~\geq~\beta\Delta^u_{proj}(x,y)$ 
(concluding the proof of Lemma~\ref{l.Delta-expansion}) if we prove the following claim:
\begin{claim} \label{c.projection}
For any $\tilde w~\in~W^c(\tilde y)$ there is $\tilde v~\in~W^c(\widetilde{f(y)})$ so that 
$$f(z_{\tilde w})=z_{\tilde v}.$$
\end{claim}

\begin{proof}[Proof of Claim~\ref{c.projection}]
We start by proving $f(z_{\tilde y})=z_{\widetilde{f(y)}}$. For that we will use the fact that $p_j$ is a local isometry for lifting the point 
$f(z_{\tilde y})$ and show that this point belongs to the local centerstable manifold of $\widetilde{f(x)}$ 
and to the local unstable manifold of $\widetilde{f(y)}$.

One has 
\begin{itemize}
\item $d(\tilde x,\tilde y)=d(x,y)<\mu$,
\item $d^s(\tilde{z}_{\tilde y},\tilde u_{\tilde y}) < 2C\mu$ according to  Lemma~\ref{dc_lemma2a},
\item $d^u(y,z_{\tilde y})=d^u(\tilde{y},\tilde{z}_{\tilde y})~<~C\mu$ and as a consequence
\item $d(y,u_{\tilde y})=d(\tilde y, \tilde u_{\tilde y})~<~3C\mu$ and by the triangle inequality
\item $d^c(x,u_{\tilde y})=d^c(\tilde x,\tilde u_{\tilde y})~<~C(3C\mu+\mu).$
\end{itemize}
Recalling that $\|Df\|$ is bounded by $\lambda$ one gets
\begin{itemize}
\item $d(f(x),f(y))~<~\mu$ (by hypothesis), 
\item $d^s(f(z_{\tilde y}),f(u_{\tilde y}))~<~2\lambda C\mu$,
\item $d^u(f(y),f(z_{\tilde y}))~<~\lambda C\mu$,
\item $d(f(y),f(u_{\tilde y}))~<~3\lambda C\mu$ and
\item $d^c(f(x),f(u_{\tilde y}))~<~\lambda C(3C\mu+\mu)$.
\end{itemize}

We have chosen $\mu$ so that the sum of all these distances is less than the isometry radius $\delta_0$ of the cover. 
As $d^c(f(x), f(u_{\tilde y}))~<~\delta_0$, there is a unique lift $\widetilde{f(u_{\tilde y})}$ so that 
$$d^c(\widetilde{f(x)},\widetilde{ f(u_{\tilde y})})= d^c(f(x), f(u_{\tilde y})).$$

In the same way there are unique lifts $\widetilde{f(z_{\tilde y})}$ and $\widehat{f(y)}$ so that
\begin{align*} 
d^s(\widetilde{f(z_{\tilde y})},\widetilde{f(u_{\tilde y})})&= d^s(f(z_{\tilde y}),f(u_{\tilde y}))\mbox{, and }\\ 
d^u(\widehat{f(y)},\widetilde{f(z_{\tilde y})})&= d^u(f(y),f(z_{\tilde y})). 
\end{align*}
By construction, this means that $\widetilde{f(z_{\tilde y})}$ is the projection of $\widehat{f(y)}$  on the centerstable leaf of $\widetilde{f(x)}$.

One deduces that $\widehat{f(y)}$ is a lift of $y$ with $d(\widetilde{f(x)},\widehat{f(y)})~<~\delta_0$. 
But $\widetilde{f(y)}$ is the unique lift of $y$ at distance less than $\delta_0$ from $\widetilde{f(x)}$ so that 
$$\widehat{f(y)}=\widetilde{f(y)}.$$ 
As a consequence it is
$$\widetilde{f(z_{\tilde y})}=\tilde z_{\widetilde{f(y)}}$$ and thus we proved the claim for $\tilde w=\tilde y$:
$$f(z_{\tilde y})= z_{\widetilde{f(y)}}.$$ 

Now consider any point $\tilde w~\in~W^c(\tilde y)$. As $W^c(\tilde y)$ is a connected manifold one can fix a path
$\tilde w_t~\in~W^c(\tilde y)$, $t~\in~[0,1]$, with $\tilde w_0~=~\tilde y$ and $\tilde w_1~=~\tilde w$.

The points  $u_{\tilde w_t}~\in~W^c(x)$  and $z_{\tilde w_t}~\in~W^u(w_t)\cap W^s(u_{\tilde w_t}) $ depend continuously on $t~\in~[0,1]$ and satisfy
\begin{itemize}
\item  $d^s(z_{\tilde w_t}, u_{\tilde w_t})=d^s(\tilde{z}_{\tilde w_t},\tilde u_{\tilde w_t})~\leq~2C\mu,$
\item $d^u(w_t,z_{\tilde w_y})=d^u(\tilde{w_t},\tilde{z}_{\tilde w_t})~<~C\mu$, and as a consequence
\item $d(w_t,u_{\tilde w_t})=d(\tilde w_t, \tilde u_{\tilde w_t})~<~3C\mu.$
\end{itemize}

Applying $f$ we get once more that the distance between $f(w_t)$, $f(z_{\tilde w_t})$ and $f(u_{\tilde w_t})$ 
remain smaller that $\delta_0$ so that the choice of a lift of $f(w_t)$ determines a lift of $f(z_{\tilde w_t})$ and $f(u_{\tilde w_t})$ .

Notice that one can make a continuous choice of lifts $\widetilde{f(w_t)}$ with $\widetilde{f(w_0)}= \widetilde{f(y)}$: this is because $f$ 
conjugates the holonomy representation of the leaves $W^c(y)$ and $W^c(f(y))$ and that $p_i$ and $p_j$ 
induces the holonomy cover over $W^c(y)$ and $W^c(f(y))$.

This continuous lift $\widetilde{f(w_t)}$ induces continuous lifts $\widetilde{f(z_{\tilde w_t})}$ and $\widetilde{f(u_{\tilde w_t})}$ 
which coincide with 
$\widetilde{f(z_{\tilde y})}$ and $\widetilde{f(u_{\tilde y})}$ for $t=0$.  In particular, $\widetilde{f(u_{\tilde w_0})}$ belongs to 
$W^c(\widetilde{f(x)})$ and therefore  $\widetilde{f(u_{\tilde w_t})}$ belongs to 
$W^c(\widetilde{f(x)})$.  One deduces that, for all $t~\in~[0,1]$ one has
\begin{align*} 
\widetilde{f(z_{\tilde w_t})}&=\tilde z_{\widetilde{f(w_t)}}\mbox{ so that }\\
f(z_{\tilde w})&=z_{\widetilde{f(w_1)}} \mbox{ with } \widetilde{f(w_1)}\in W^c(\widetilde{f(y)}).
\end{align*}
This proves the claim.
\end{proof}
This concludes the proof of Lemma~\ref{l.Delta-expansion}.
\end{proof}

\subsection{Dynamical coherence: proof of Theorem~\ref{theorem_dc}}

We have now the tools for making rigorous the intuitive idea for proving the dynamical coherence: given two 
center leaves through points $w$ and $z$ in the same stable leaf, the distance between the images by 
$f^n$ of the leaves tends to $0$: one would like to conclude that every point of one leaf belongs to 
the stable 
leaf of a point in the other leaf. In our language, this means that the unstable projection distances 
$\De^u_{proj}(w,z)$ and $\De^u_{proj}(z,w)$ vanish. This is obtained in Corollary~\ref{c.Delta-expansion} 
with the help of  Lemma~\ref{l.Delta-expansion}.  

Indeed, we show
\footnote{In a previous version of this paper, 
we proved the dynamical coherence without noticing that our proof also gave the completeness. 
We rewrote the proof (after reading \cite{C10} and another unpublished version of the 
article \cite{C11}) emphasizing that property.} 
a stronger property than dynamical coherence, called \emph{completeness} 
in \cite{C10}.
\begin{prop}\label{p.complete} For any $x~\in~M$ one has 
 $$\bigcup_{w\in W^s(x)} W^c(w) =   \bigcup_{z\in W^c(x)} W^s(z).$$
\end{prop}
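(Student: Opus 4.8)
The plan is to prove the two inclusions separately, using the expansion property of the unstable projection distance (Lemma~\ref{l.Delta-expansion}) together with the fact that stable leaves contract under forward iteration and, via the uniform compactness of $\cF^c$, so does the Hausdorff distance between center leaves. The key intermediate statement, which I would isolate as a corollary of Lemma~\ref{l.Delta-expansion}, is the following: there is $\mu>0$ so that if $x,y\in M$ satisfy $d(f^n(x),f^n(y))<\mu$ for all $n\geq 0$, then $\De^u_{proj}(x,y)=0$. Indeed, by Lemma~\ref{l.Delta-expansion} one has $\De^u_{proj}(f^n(x),f^n(y))\geq\beta^n\De^u_{proj}(x,y)$ as long as consecutive iterates stay $\mu$-close; since $\beta>1$ and $\De^u_{proj}$ is bounded by $\mu_1$ (third item of Remark~\ref{r.proj}), letting $n\to\infty$ forces $\De^u_{proj}(x,y)=0$. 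By the second item of Remark~\ref{r.proj}, $\De^u_{proj}(x,y)=0$ is equivalent to $W^c(y)\subset W^s_{\mu_1}(W^c(x))$.

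Now I would take $x\in M$ and $w\in W^s(x)$ and show $W^c(w)\subset\bigcup_{z\in W^c(x)}W^s(z)$, which gives the inclusion $\subseteq$. Since $w\in W^s(x)$, the manifold distance $d(f^n(x),f^n(w))\to 0$; because $\cF^c$ is uniformly compact, Corollary~\ref{lemma_neighborhoods} (or directly Theorem~\ref{theorem_epstein}) gives that the Hausdorff distance $d_H(W^c(f^n(x)),W^c(f^n(w)))\to 0$ as well, so after replacing $x,w$ by a high iterate $f^N(x),f^N(w)$ we may assume $d(f^n(x),f^n(w))<\mu$ for all $n\geq 0$ (here one uses that contracting along $W^s$ plus the equivalence of the Hausdorff topology with the metric topology lets us control all forward iterates at once, possibly after shrinking). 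Applying the corollary above to $f^N(x),f^N(w)$ yields $W^c(f^N(w))\subset W^s_{\mu_1}(W^c(f^N(x)))$, and pulling back by $f^{-N}$ gives $W^c(w)\subset W^s(W^c(x))$, as desired. The reverse inclusion $\supseteq$ is obtained symmetrically: if $z\in W^c(x)$ and $w\in W^s(z)$, then $w\in W^s(z)\subset W^s(W^c(x))$ and we must see $W^c(w)\subset W^s(W^c(x))$; but $W^c(w)\subset W^s(W^c(z))=W^s(W^c(x))$ by the same argument applied with base point $z$ in place of $x$. Combining, each point of one side lies in a center leaf contained in the other side, so the two saturated sets coincide.

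The step I expect to be the main obstacle is the passage from ``$d(f^n(x),f^n(w))\to 0$'' to the uniform bound ``$d(f^n(x),f^n(w))<\mu$ for all $n\geq0$'' in the holonomy-cover sense needed to iterate Lemma~\ref{l.Delta-expansion}, together with checking that $\De^u_{proj}$ is actually defined (i.e. the relevant points are $\mu_2$-close, and $W^c(f^n(w))$ lies in the $\mu_1/C$-Hausdorff neighborhood of $W^c(f^n(x))$ inside the appropriate $V_i$) all along the orbit. This requires combining the exponential contraction along $\cF^s$ with the uniform compactness of $\cF^c$ to control the Hausdorff distance of center leaves under iteration — precisely the subtlety flagged in the introduction via Proposition~\ref{p.example}, namely that Hausdorff smallness alone is not enough and one genuinely needs the lifted distance on holonomy covers, which is why the machinery of Sections~\ref{s:covers} and the projection distance was set up. Once that uniform control is in place, the rest is a routine application of the expansion lemma and Remark~\ref{r.proj}.
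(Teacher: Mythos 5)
Your first inclusion, $\bigcup_{w\in W^s(x)}W^c(w)\subseteq\bigcup_{z\in W^c(x)}W^s(z)$, is correct and is essentially the paper's argument: forward closeness of the orbits plus the expansion estimate $\Delta^u_{proj}(f^n(w),f^n(z))\geq\beta^n\Delta^u_{proj}(w,z)$ and the uniform bound $\Delta^u_{proj}<\mu_1$ force $\Delta^u_{proj}=0$, which by Remark~\ref{r.proj} gives $W^c(w)\subset W^s_{\mu_1}(W^c(x))$ after passing to a high iterate. The technical point you flag (that the projection distance is defined along the whole forward orbit) is indeed handled exactly as you suggest, via the ``for $n\geq n_0$'' formulation in the paper's Corollary~\ref{c.Delta-expansion}.

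The reverse inclusion, however, has a genuine gap: you prove the wrong containment. Given $y\in W^s(z)$ with $z\in W^c(x)$, membership in the left-hand side means $W^c(y)\cap W^s(x)\neq\emptyset$, i.e.\ the center leaf of $y$ must pass through a point of the stable leaf \emph{of $x$ itself}. What you establish instead is $W^c(y)\subset W^s(W^c(z))=W^s(W^c(x))$, which only says that $W^c(y)$ is contained in the \emph{right}-hand side (each of its points lies in the stable leaf of \emph{some} point of $W^c(x)$, not necessarily of $x$). In other words, you show that the right-hand side is $\mathcal{F}^c$-saturated and re-derive the inclusion you already had; the two facts together do not yield equality. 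The missing ingredient is the symmetry of the hypothesis in Corollary~\ref{c.Delta-expansion}: since $d(f^n(y),f^n(z))\to0$ is symmetric in the two leaves, the corollary applies with the roles of $W^c(x)$ and $W^c(y)$ exchanged and gives $W^c(x)\subset\bigcup_{u\in W^c(y)}W^s(u)$. In particular there is $u\in W^c(y)$ with $x\in W^s(u)$, hence $u\in W^s(x)$ and $y\in W^c(u)\subset\bigcup_{w\in W^s(x)}W^c(w)$. This is exactly how the paper closes the argument, and your proof needs this step (or an equivalent one) to be complete.
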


Notice that Proposition~\ref{p.complete} allows us to define the center stable leaves through a point $x\in M$ as 
$W^{cs}(x):=\bigcup_{w\in W^s(x)} W^c(w) = \bigcup_{z\in W^c(x)} W^s(z)$. The center stable foliation 
we obtain is $f$-invariant
as the center foliation $\cF^c$ is assumed to be invariant. 
Applying  Proposition~\ref{p.complete}
to $f^{-1}$ one gets the  (invariant) center unstable foliation, and thus the dynamical coherence.  In other words 
Proposition~\ref{p.complete} implies Theorem~\ref{theorem_dc}. 
The proof of this proposition is the aim of this section.\\
We start the proof with the following corollary: 
\begin{corol} \label{c.Delta-expansion} Let $\mu$ be the constant chosen in Lemma~\ref{l.Delta-expansion}. 
The three following properties are equivalent:
\begin{itemize}
\item The two center leaves $W^c(x)$ and $W^c(y)$ satisfy  that, 
there is $n_0$ and $w~\in~W^c(x)$ and $z~\in~W^c(y)$ such that for any $n~\geq~n_0$  the distance 
$d(f^n(w),f^n(z))$ is bounded by $\mu$. 
\item  There is $z~\in~W^c(y)$ so that $z$ belongs to  $\bigcup_{w\in W^c(x)} W^s(w)$.
\item  $W^c(y)~\subset~\bigcup_{w\in W^c(x)} W^s(w)$.
\item The two center leaves $W^c(x)$ and $W^c(y)$ satisfy  that, 
there is  $w~\in~W^c(x)$ and $z~\in~W^c(y)$ such that $\De^u_{proj}(w,z)$ is well defined and
$\De^u_{proj}(w,z)~=~0$.
\end{itemize}

 
\end{corol}
\begin{proof} The fact that the third item implies the second is trivial.  
The fact that the second item implies the first one is also straightforward.

Let us first show that the first item implies the fourth one. 
According to Lemma~\ref{l.proj} and Remark~\ref{r.proj} (because $\mu$ is chosen smaller than $\mu_2$) one gets that 
$\De^u_{proj}(f^n(w),f^n(z))$ is well defined for every $n~\geq~n_0$ and bounded by $\mu_1$. 
However, Lemma~\ref{l.Delta-expansion} implies that
$$\De^u_{proj}(f^n(w), f^n(z))~\geq~\beta^n\De^u_{proj}(w,z),$$
for every $n~\geq~0$, where $\beta~>~1$ is the expansivity constant of 
$Df$ on $E^u$.  As a direct consequence it follows $\De^u_{proj}(w,z)~=~0$, the forth item.
This implies the third item: 
$$W^c(y)=W^c(z)~\subset~\bigcup_{u\in W^c(w)} W^s(u)=\bigcup_{w\in W^c(x)} W^s(x).$$
This ends the proof.
\end{proof}

As a direct consequence of Corollary~\ref{c.Delta-expansion} one gets 

\begin{corol} For any $y$ such that  $W^c(y)\cap~\bigcup_{z\in W^c(x)} W^s(z)~\neq~\emptyset$ one has:
$$\bigcup_{z\in W^c(x)} W^s(z)=\bigcup_{w\in W^c(y)} W^s(w).$$
\end{corol}
\begin{proof} First, Corollary~\ref{c.Delta-expansion} implies that 
$W^c(y)~\subset~\bigcup_{z\in W^c(x)} W^s(z)$.
Then the stable manifold through any point $w~\in~W^c(y)$ is contained in the 
stable manifold of a point $z~\in~W^c(x)$ that is 
$$ \bigcup_{w\in W^c(y)} W^s(w)\subset \bigcup_{z\in W^c(x)} W^s(z).$$

For getting the reverse inclusion it is now enough to prove  
\begin{equation}\label{e.inclusion}
W^c(x)~\cap~\bigcup_{w\in W^c(y)} W^s(w)~\neq~\emptyset.
\end{equation}
This is just because the hypothesis means there is $w~\in~W^c(y)$ and $z~\in~W^c(x)$ so that $W^s(w)=W^s(z)$. 
Thus $z$ belongs to  $\bigcup_{w\in W^c(y)} W^s(w)$, which is therefore not empty, proving Equation~\ref{e.inclusion}.

\end{proof}

\begin{corol} For any $x~\in~M$ one has
$$\bigcup_{w\in W^s(x)} W^c(w)\subset   \bigcup_{z\in W^c(x)} W^s(z).$$
\end{corol}

\begin{proof} If $y~\in~W^c(w)$ for some $w~\in~W^s(x)$, then $d(f^n(w), f^n(x))~\to~0$ as $n~\to~+\infty$ 
 so that Corollary~\ref{c.Delta-expansion} 
implies that 
$W^c(y)=W^c(w)~\subset~\bigcup_{z\in W^c(x)} W^s(z)$.
\end{proof}

We can now finish the proof of the dynamical coherence (and therefore of Theorem~\ref{theorem_dc}) 
by proving:

\begin{proof}[Proof of Proposition~\ref{p.complete}]
 We just have to prove $\bigcup_{z\in W^c(x)} W^s(z)~\subset~\bigcup_{w\in W^s(x)} W^c(w)$.  

 Let $y~\in~\bigcup_{z\in W^c(x)} W^s(z)$, that is: there is $z~\in~W^c(x)$ with $y~\in~W^s(z)$. So for 
 $n$ large $d(f^n(y),f^n(z))$ remains bounded by $\mu$.
 
 According to Corollary~\ref{c.Delta-expansion}, one deduces  $W^c(x)~\subset~\bigcup_{w\in W^c(y)} W^s(w)$. 
In particular, there is a $w~\in~W^c(y)$ so that $x~\in~W^s(w)$, that is $w~\in~W^s(x)$ and $y~\in~W^c(w)$,
which concludes the proof. 
\end{proof}

\subsection{Expansivity in the case of trivial center-holonomy}
The expansivity of $f$ with respect to orbits of center leaves follows almost 
immediately from previous arguments for the proof of dynamical coherence 
if we assume a compact center foliation with 
\emph{trivial holonomy}:
\begin{prop}\label{p.expansive} Le $f$ be  a partially hyperbolic diffeomorphism having 
a uniformly compact invariant center foliation $\cF^c$
without holonomy.  Then the homeomorphism \newline$F:M/\cF^c~\rightarrow~M/\cF^c$ induced by 
$f$ on the quotient of the center foliation is expansive.
\end{prop}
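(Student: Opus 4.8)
The plan is to show directly that if two points $x,y\in M$ have the property that the orbits of their center leaves $W^c(f^n(x))$ and $W^c(f^n(y))$ stay at Hausdorff distance less than some uniform constant $\epsilon_0$ for \emph{all} $n\in\mathbb Z$, then $W^c(x)=W^c(y)$. The key observation, available because we have already established dynamical coherence (Theorem~\ref{theorem_dc}) and in particular Proposition~\ref{p.complete}, is that for $\epsilon_0$ small the local product structure of $\cF^s,\cF^c,\cF^u$ lets us intersect the center stable leaf of $x$ with the center unstable leaf of $y$: since $d_H(W^c(x),W^c(y))<\epsilon_0$, pick representatives $x,y$ with $d(x,y)$ small and apply Lemma~\ref{dc_lemma1} to produce a point $z$ in $W^{cs}_{loc}(x)\cap W^{cu}_{loc}(y)$, so $W^c(z)$ is ``between'' $W^c(x)$ and $W^c(y)$: it lies in the stable saturate of $W^c(x)$ and in the unstable saturate of $W^c(y)$.

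\medskip

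First I would reduce the problem to the two leaves $W^c(x)$ and $W^c(z)$, which are related by the \emph{stable} foliation, and to $W^c(z)$ and $W^c(y)$, which are related by the \emph{unstable} foliation. For the stable pair: by Corollary~\ref{c.Delta-expansion} (applied in the direction of forward iteration) the hypothesis that the forward orbits of $W^c(x)$ and $W^c(z)$ stay $\epsilon_0$-close forces $\De^u_{proj}(x,z)=0$, i.e. $W^c(z)\subset W^s_{loc}(W^c(x))$. Symmetrically, applying the same argument to $f^{-1}$ with the center unstable foliation gives $W^c(z)\subset W^u_{loc}(W^c(y))$ from the backward orbits staying close, so $\De^s_{proj}(y,z)=0$ as well. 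Thus $W^c(z)$ is simultaneously in the local stable saturate of $W^c(x)$ and the local unstable saturate of $W^c(y)$; iterating forward contracts the first relation and iterating backward contracts the second, so in fact $W^c(z)$ coincides with both. Concretely, $W^c(z)\subset W^s_{loc}(W^c(x))$ means every point of $W^c(z)$ is within a uniformly small stable distance of $W^c(x)$; since $d_H(f^n W^c(z),f^n W^c(x))$ then tends to $0$ and, by the trivial holonomy hypothesis, the center leaves are locally constant fibers near each other, one gets $W^c(z)=W^c(x)$ by the uniform transversality of $E^s$ and $E^c$ (Remark~\ref{r.distance}). The same for the backward direction gives $W^c(z)=W^c(y)$, hence $W^c(x)=W^c(y)$, which is exactly expansivity of $F$ with expansivity constant (in the Hausdorff metric) equal to $\epsilon_0$.

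\medskip

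To make the ``$W^c(z)\subset W^s_{loc}(W^c(x))$ implies $W^c(z)=W^c(x)$'' step clean, I would argue as follows: pick $u\in W^c(z)$ and the point $u'\in W^c(x)$ with $u\in W^s_\mu(u')$; then $d^c$ between $f^{-n}(u)$ and the leaf $W^c(f^{-n}(x))$ is controlled (the whole leaf $W^c(z)$ stays in the $\mu$-stable-saturate of $W^c(x)$ under all forward iterates but we want to compare under \emph{backward} iterates), so here it is cleaner simply to note: $W^c(z)\subset W^s_{loc}(W^c(x))$ together with the symmetric conclusion $W^c(x)\subset W^u_{loc}(W^c(z))$ forces, by the third bullet of Remark~\ref{r.distance} (that $W^u_\mu(p)\cap W^c_\mu(p)=\{p\}$), that the stable holonomy from $W^c(x)$ to $W^c(z)$ is trivial, i.e. the leaves coincide. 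Since we are in the no-holonomy case, all center leaves near $W^c(x)$ form a locally trivial fibration, and a leaf contained in both the local stable and local unstable saturate of $W^c(x)$ must be $W^c(x)$ itself. I expect the main obstacle to be precisely this last ``squeezing'' argument: showing rigorously that a center leaf lying in $W^s_{loc}(W^c(x))\cap W^u_{loc}(W^c(x))$ equals $W^c(x)$, which needs the trivial holonomy to guarantee a genuine local product structure of $\cF^{cs}$ and $\cF^{cu}$ on the quotient (their intersection is a single fiber) — this is exactly where the general case fails, as stressed in the introduction via the \cite{BoW05} example, so the proof must visibly use the ``without holonomy'' hypothesis here.
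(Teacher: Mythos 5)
Your proof follows essentially the same route as the paper: forward closeness of the orbits of the two center leaves forces $\De^u_{proj}=0$ via Corollary~\ref{c.Delta-expansion}, backward closeness forces $\De^s_{proj}=0$ via the same corollary applied to $f^{-1}$, and then a leaf contained in both $W^s_{\mu_1}(W^c(x))$ and $W^u_{\mu_1}(W^c(x))$ is squeezed onto $W^c(x)$ using the uniqueness of the unstable projection, which is exactly where trivial holonomy enters. Two small remarks. First, the intermediate point $z\in W^{cs}_{loc}(x)\cap W^{cu}_{loc}(y)$ is an unnecessary detour: Corollary~\ref{c.Delta-expansion} can be applied directly to the pair $W^c(x),W^c(y)$ in both time directions, which is what the paper does. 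Second, your justification of the final squeezing step cites the third bullet of Remark~\ref{r.distance} ($W^u_\mu(p)\cap W^c_\mu(p)=\{p\}$), which is not by itself sufficient; the correct tool is Lemma~\ref{dc_lemma2} (in the trivial-holonomy case the intersection $W^u_{\mu_1}(w)\cap W^s_{\mu_1}(W^c(x))$ is a \emph{single} point for the whole compact leaf $W^c(x)$, not just a plaque): for $u\in W^c(y)$ one has $u\in W^u_{\mu_1}(w)$ with $w\in W^c(x)$ and $u\in W^s_{\mu_1}(W^c(x))$, and since $w$ itself also lies in $W^u_{\mu_1}(w)\cap W^s_{\mu_1}(W^c(x))$, uniqueness gives $u=w\in W^c(x)$. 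You correctly identified this as the crux and as the place where the no-holonomy hypothesis is indispensable, so the gap is one of citation rather than of substance.
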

\begin{rem}
\begin{itemize}
\item The expansivity on the quotient space implies the plaque expansivity of the 
center foliation $(f,\cF^c)$, hence it implies its structural stability 
(see discussion of these properties in the introduction of this article). 

\item Let us recall and emphasize that the conclusion of Proposition~\ref{p.expansive} 
is wrong without the assumption of trivial holonomy. See an example in \cite{BoW05}, further explained in Section~\ref{sec:comments}. 
\end{itemize} 
\end{rem}
The proof of Proposition~\ref{p.expansive} uses the following lemma:

\begin{lemma}\label{l.uniqueDelta} Le $f$ be  a partially hyperbolic diffeomorphism having 
a uniformly compact invariant center foliation $\cF^c$ without holonomy.  
Then there is $\mu>0$ so that, for every pair of center leaves $W^c_1$ $W^c_2$ whose Hausdorff distance is 
less than $\mu$ then 
for every pair of  pairs $(x_1,x_2),(y_1,y_2)\in W^c_1\times W^c_2$ with  
$d(x_1,x_2)<\mu$ and $d(y_1,y_2)<\mu$ one has  
$$\De^u_{proj}(x_1,x_2)=\De^u_{proj}(y_1,y_2)\mbox{ and }\De^s_{proj}(x_1,x_2)=\De^s_{proj}(y_1,y_2) .$$
\end{lemma}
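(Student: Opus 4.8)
The plan is to show that, in the trivial-holonomy case, the quantities $\De^u_{proj}(x_1,x_2)$ and $\De^s_{proj}(x_1,x_2)$ depend only on the pair of leaves $(W^c_1,W^c_2)$ and not on the choice of base points. Since the holonomy is trivial, the holonomy cover $p_i\colon\tilde V_i\to V_i$ restricts to an isometry on each leaf and, by Lemma~\ref{l.covering}, on each connected component of the preimage of a small saturated neighborhood; in particular the "lift" in Lemma~\ref{l.proj} is just a local copy of the leaf itself in $M$, so we may carry out the whole argument downstairs without passing to a cover. First I would fix $\mu>0$ small enough that, whenever $d_H(W^c_1,W^c_2)<\mu$ and $d(x_1,x_2)<\mu$, Lemma~\ref{l.proj} and Lemma~\ref{dc_lemma2a} apply: for every $w\in W^c_2$ the point $\Pi_{x_1,x_2}(w)=W^u_{\mu_1}(w)\cap W^s_{\mu_1}(W^c_1)$ exists and is unique, varies continuously with $w$, and the associated foot point $u_w\in W^c_1$ on the stable manifold is also unique, with $d^u(w,\Pi_{x_1,x_2}(w))<C\mu$ and $d^s(\Pi_{x_1,x_2}(w),u_w)<2C\mu$ (Remark~\ref{dc_rem}).

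The heart of the argument is that the map $w\mapsto\Pi_{x_1,x_2}(w)$, hence the value $\sup_{w}d^u(w,\Pi_{x_1,x_2}(w))$, does not actually reference $x_1$ or $x_2$: by uniqueness, $\Pi_{x_1,x_2}(w)$ is \emph{the} point of $W^u_{\mu_1}(w)$ lying on $\bigcup_{z\in W^c_1}W^s_{\mu_1}(z)$, and likewise $u_w$ is the unique point of $W^c_1$ whose local stable manifold meets $W^u_{\mu_1}(w)$. So if $(y_1,y_2)\in W^c_1\times W^c_2$ is another pair with $d(y_1,y_2)<\mu$, then for every $w\in W^c_2$ the point produced starting from $(y_1,y_2)$ solves the same intersection problem $W^u_{\mu_1}(w)\cap\bigl(\bigcup_{z\in W^c_1}W^s_{\mu_1}(z)\bigr)$; by the uniqueness in Lemma~\ref{dc_lemma2a} (applied with the leaf $W^c_1$ and the point $w$, which is at distance $<C\mu<\mu_1/C$ from $W^c_1$ for $\mu$ small), the two points coincide. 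Therefore $\Pi_{y_1,y_2}(w)=\Pi_{x_1,x_2}(w)$ for all $w\in W^c_2$, whence $\De^u_{proj}(x_1,x_2)=\sup_w d^u(w,\Pi_{x_1,x_2}(w))=\sup_w d^u(w,\Pi_{y_1,y_2}(w))=\De^u_{proj}(y_1,y_2)$. The identical argument with the roles of stable and unstable foliations exchanged (using Remark~\ref{r.proj}, first item) gives $\De^s_{proj}(x_1,x_2)=\De^s_{proj}(y_1,y_2)$.

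The one point requiring care — and the main obstacle — is making sure all the local intersections stay inside the radius where uniqueness holds, \emph{uniformly} in $w\in W^c_2$ and over all admissible base-point pairs, so that a single $\mu$ works. This is where I would invoke Corollary~\ref{lemma_neighborhoods}: choosing $\mu$ so that $d_H(W^c_1,W^c_2)<\mu$ forces $W^c_2\subset B_H(W^c_1,\mu_1/C)$, hence every $w\in W^c_2$ satisfies $d(w,W^c_1)<\mu_1/C$, which is exactly the hypothesis of Lemma~\ref{dc_lemma2a}; the compactness of the leaf space (Theorem~\ref{theorem_epstein}) makes this choice uniform. Finally, a small verification is needed that two different base points $x_2,y_2\in W^c_2$ do not produce different "sheets" of the stable saturation $\bigcup_{z\in W^c_1}W^s_{\mu_1}(z)$ — but with trivial holonomy this saturation is, locally near $W^c_1$, a genuine embedded center-stable plaque (by Proposition~\ref{p.complete}, it is a leaf $W^{cs}(x_1)$), so $W^u_{\mu_1}(w)$ meets it in at most one point regardless of any base-point choice, closing the argument.
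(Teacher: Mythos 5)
Your proposal is correct and follows essentially the same route as the paper: both arguments reduce to the observation that, with trivial holonomy, the intersection point $\Pi(w)\in W^u_{\mu_1}(w)\cap W^s_{\mu_1}(W^c_1)$ is uniquely determined by $w$ and the pair of leaves alone (the second claim of Lemma~\ref{l.proj} with trivial holonomy covers), so the supremum defining $\De^u_{proj}$ cannot depend on the base points. Your extra verifications (uniformity of $\mu$ via compactness, and that no distinct ``sheets'' of the stable saturation can arise) are exactly the points the paper leaves implicit.
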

\begin{proof}[Idea of the proof] Let $\mu_1,\mu_2$ be defined by  
Lemma~\ref{dc_lemma2a} and~\ref{l.proj}. As we assume trivial holonomy, 
there is $\mu>0$ so that for every leaf 
$W^c_1$,$W^c_2$ with 
Hausdorff distance less than $\mu$  for every $w\in W^c_2$ there is a unique point 
$\Pi(w)\in W^u_{\mu_1}(w)\cap W^s_{\mu_1}(W^c_1)$:  indeed,  that is the second claim in Lemma~\ref{l.proj}, 
noticing that the holonomy covers are trivial. 

Therefore for every $(x_1,x_2)\in W^c_1\times W^c_2$ with  $d(x_1,x_2)<\mu$ one has 
$$\De^u_{proj}(x_1,x_2)=\sup_{w\in W^c_2} d^u(w,\Pi(w)),$$
which finishes the proof.
\end{proof}

According to Lemma~\ref{l.uniqueDelta}, there is no ambiguity in denoting 
$$\De^u_{proj}(W^c_1,W^c_2):=\De^u_{proj}(x_1,x_2).$$

Now we can prove Proposition~\ref{p.expansive}:
\begin{proof}[Proposition~\ref{p.expansive}]

We choose $\mu~>~0$ small enough so that it satisfies both  Lemma~\ref{l.uniqueDelta} and  
Lemma~\ref{l.Delta-expansion}. We consider two arbitrary center leaves 
$W^c(x)$ and $W^c(y)$ which remain for all iterates at a distance less than $\mu~>~0$, 
i.e. $d_H(W^c(f^nx),W^c(f^ny))~<~\mu$ for all $n~\in~\mathbb{Z}$. 

So, first we have that $d_H(W^c(f^nx),W^c(f^ny))~<~\mu$ for all $n~\geq~0$. 
Applying Corollary~\ref{c.Delta-expansion}  we get $\De^u_{proj}(W^c(x),W^c(y))=0$ and 
$W^c(y)~\subset~W^s_{\mu_1}(W^c(x))$. This implies that $y$  is the unique intersection point  of
$W^u_{\mu_1}(y)$ with  $ W^s_{\mu_1}(W^c(x))$, that is: 
$$\left\{y\right\}=W^u_{\mu_1}(y) \cap W^s_{\mu_1}(W^c(x)).$$

Now we consider the backward iterates, applying Corollary~\ref{c.Delta-expansion} for $f^{-1}$: 
 we get $\De^s_{proj}(W^c(x),W^c(y))=0$ and 
$W^c(y)~\subset~W^u_{\mu_1}(W^c(x))$, and thus:
$$\left\{y\right\}=W^s_{\mu_1}(y) \cap W^u_{\mu_1}(W^c(x)).$$

So there are $w,z\in W^c(x)$ so that $y\in W^s_{\mu_1}(z)$ and  $y\in W^u_{\mu_1}(w)$. 
Thus, according to the notation in the proof of Lemma~\ref{l.uniqueDelta}, $z$ is the projection 
$\Pi(w)$ of $w$ on $W^s(W^c(x))$.  This implies $z=w$. So $y\in W^s_{\mu_1}(w)\cap W^u_{\mu_1}(w)=\{w\}$.
 
 This implies $y~\in~W^c(x)$ concluding the proof.

\end{proof}

\section{Shadowing Lemma}\label{sec:shadowing}

In this section we prove the Shadowing Lemma (Theorem~\ref{shadowing_lemma}) 
on the leaf space $M/\mathcal{F}^c$ for the homeomorphism $F$ induced by a partially 
hyperbolic $C^1$-diffeomorphism $f:M~\rightarrow~M$ with an invariant uniformly compact 
center foliation $\mathcal{F}^c$.
\subsection{Some remarks concerning the proof.}
First, recall that we defined a modified Hausdorff distance $\Delta_H$ on the space of center leaves which is topologically equivalent to the Hausdorff distance, see Corollary~\ref{corol_distance}. Therefore, it is enough to prove the 
Shadowing Lemma for $\De_H$. 
\begin{rem} If a homeomorphism $h$ of a compact metric space $K$ satisfies the shadowing property for finite 
positive pseudo orbits, then it satisfies the shadowing property for every bi-infinite pseudo orbits. 
\begin{proof}
Consider $\delta>0$ and $\varepsilon>0$ so that every finite positive $\varepsilon$-pseudo orbit 
is $\delta$-shadowed. Let $(x_i)_{i\in\ZZ}$ be a bi-infinite $\varepsilon$-pseudo orbit. 
For every $n>0$, let $z^n\in K$ so that its orbit shadows the finite 
positive $\varepsilon$-pseudo-orbit $(x_{-n}, \dots, x_n)$; 
let $y^n$ denote $h^n(z^n)$.  By definition, $d(h^i(y^n),x_i)\leq \delta$ for every $i\in\{-n,\dots,n\}$. 
By compactness of $K$ one can extract a subsequence $y^{n_j}$ converging to some $y\in K$. \\
By continuity of $h^i$ for every $i$ one gets $d(h^i(y),x_i)\leq \delta$ for every $i\in\ZZ$, ending the proof. 
\end{proof}
\end{rem}
Note furthermore that the shadowing property for a homeomorphism $h$ 
of a compact metric space is equivalent to 
the shadowing property for some $h^N$, $N>0$. 

Thus, up to replace $f$ by some iterate $f^N$, we may assume 
$$2C\alpha<1,$$ 
where  $\alpha$ is an upper bound of $\|Df|_{E^s}\|$ and of $\| Df^{-1}|_{E^u}\|$, and $C$ is the constant given by Lemma~\ref{dc_lemma1a}. 
Recall that $\delta_0>0$ is given by Lemma~\ref{lemma_independence_cover} and assures that within a Hausdorff ball of radius $\delta_0$ the holonomy covering maps are isometries and that the distances are independent from the choice of the holonomy cover. 

The aim of this section is the proof of Proposition~\ref{p.shadowing} below which provides the proof for the Shadowing Lemma:

\begin{prop}\label{p.shadowing} With the notations above,
for any $\eta~>~0$ consider $\varepsilon~<~\min\{ \frac{(1-\alpha)\eta}{2C}, \delta_0 \}$.  
Then for any  $\varepsilon$-pseudo orbit $\{W_i\}_{i \geq 0}$ for $\Delta_H$, 
where $W_i$ is a center leaf of $f$,
there is a center leaf $W$ so that for any $i~\geq~0$ one has
$$\Delta_H(f^i(W), W_i)\leq \eta.$$
 
\end{prop}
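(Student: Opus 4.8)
The plan is to follow the classical hyperbolic proof of the Shadowing Lemma, using the stable/center-stable leaves (through the quotient) to produce a nested sequence of ``candidate'' leaves and then pass to a limit. First I would set up the construction in the holonomy cover. Given the $\varepsilon$-pseudo orbit $\{W_i\}_{i\ge 0}$ of center leaves, I work backwards: for each $N$ I build a leaf $W^{(N)}$ whose orbit $\varepsilon$-shadows the finite piece $W_0,\dots,W_N$. The inductive step from $N$ to $N+1$ uses the local product structure furnished by Lemma~\ref{dc_lemma1a} (existence) and Lemma~\ref{dc_lemma1} (uniqueness): since $\Delta_H(f(W^{(N)}_{\text{shifted}}),W_{N+1})<\varepsilon<\delta_0$, one may lift to a holonomy cover $\tilde V_i$ (using Lemma~\ref{l.Lebesgue} to ensure everything stays far from the boundary and that $p_i$ is a local isometry), intersect the local unstable manifold of (a lift of a point of) $W_{N+1}$ with $W^s_{\mu}(W^c(\tilde x))$ for the appropriate lift $\tilde x$, and pull back by $f^{-1}$. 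The key estimate is that correcting the endpoint of the pseudo orbit at step $N+1$ moves the already-constructed leaf $W^{(N)}$ only by at most $C\alpha^{N}\varepsilon$ in the $\Delta_H$-distance, because the unstable-projection correction is of size $\le C\varepsilon$ and is contracted by $\alpha$ at each of the $N$ backward iterates (here is where we use $2C\alpha<1$, i.e. $C\alpha<\tfrac12$, and the hypothesis $\varepsilon<\tfrac{(1-\alpha)\eta}{2C}$). Summing the geometric series $\sum_{k\ge0}C\alpha^k\varepsilon=\frac{C\varepsilon}{1-\alpha}<\tfrac{\eta}{2}$ gives that the sequence $W^{(N)}$ is Cauchy in $M/\cF^c$, hence converges to a center leaf $W$.

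Next I would check that the limit leaf $W$ does the job. For fixed $i$, the $\Delta_H$-distance $\Delta_H(f^i(W),W_i)$ is bounded by $\Delta_H(f^i(W^{(N)}),W_i)$ plus the error from replacing $W^{(N)}$ by $W$; the first term is controlled by the tail $\sum_{k> N-i}C\alpha^k\varepsilon$ of the geometric series plus the initial shadowing discrepancy $\le\varepsilon$ coming from the $N$-th step, and the second term goes to $0$ as $N\to\infty$ by continuity of $f^i$ on $M/\cF^c$ (equivalently, since the iterated $\Delta_H$-distances are controlled). Adding the pieces yields $\Delta_H(f^i(W),W_i)\le\varepsilon+\frac{C\varepsilon}{1-\alpha}<\varepsilon+\tfrac{\eta}{2}\le\eta$, using $\varepsilon<\tfrac{(1-\alpha)\eta}{2C}$ and $\varepsilon$ small. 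Since, by the remark preceding the proposition, the shadowing property for finite positive pseudo orbits implies it for bi-infinite ones, and since $\Delta_H$ is topologically equivalent to $d_H$ by Corollary~\ref{corol_distance}, this proves Theorem~\ref{shadowing_lemma}.

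The main obstacle, as the introduction stresses, is the non-trivial holonomy of $\cF^c$: the intersection point ``$W^u_\mu(\text{point of }W_{N+1})\cap W^s_\mu(W^c(\tilde x))$'' is only well defined after lifting to a holonomy cover, and one has to choose compatibly a lift of $W_{N+1}$ for each lift of $W^{(N)}$. This is exactly the machinery assembled in Section~\ref{sec:coherence}: Lemma~\ref{l.proj} gives the unstable projection $\Pi_{\tilde x,\tilde y}$ and shows the resulting projection distance $\Delta^u_{proj}$ does not depend on the chosen lift or on the cover chart $V_i$, Lemma~\ref{lemma_independence_cover} gives the local isometry between different lifts, and Lemma~\ref{l.Delta-expansion} gives the uniform expansion $\Delta^u_{proj}(f(x),f(y))\ge\beta\,\Delta^u_{proj}(x,y)$ that makes the backward contraction estimate rigorous. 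So the technical heart of the proof is verifying that each step of the construction takes place inside a Hausdorff ball of radius $\delta_0$ (so that all these lifting lemmas apply) and that the cumulative $\Delta_H$-displacement of the candidate leaves stays within the $\delta_0$-window throughout the induction; once that bookkeeping is in place, the contraction estimate and the passage to the limit are routine.
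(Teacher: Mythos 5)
Your proposal is correct and follows essentially the same route as the paper: an inductive construction of finite shadowing leaves via the local product structure lifted to holonomy covers, with the unstable correction relative to the previous candidate contracted by $\alpha$ under backward iteration, summed as a geometric series bounded by $\frac{2C\varepsilon}{1-\alpha}<\eta$, and a limit leaf at the end. One small point of care: the new candidate must be taken in the \emph{unstable} saturation of the previous candidate and the \emph{stable} saturation of $W_{N+1}$ (your phrase ``local unstable manifold of a lift of a point of $W_{N+1}$'' has this backwards), which is exactly what makes the backward contraction and the forward boundedness of the discrepancy work, as your own key estimate already indicates.
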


\subsection{Adaption of the stable and unstable projection distances.} 

According to Theorem~\ref{theorem_dc}, we know that $f$ is dynamically coherent so that one can use 
the center stable and center unstable foliations.  We will denote by $W^s(W)$ and $W^u(W)$ 
the center stable 
and center unstable leaves of a given center leaf $W$.  We have seen (see  Proposition~\ref{p.complete}) 
that $W^s(W)$ 
is the union of the  stable leaves through $W$.  We will denote by $W^s_\mu(W)$ the union of the local
stable manifolds through the points of $W$. \\
In the whole proof, we use intensively the notion of lifts of center leaves.  
This refers to lifts on a holonomy cover $\{U_i,p_i\}$  for which a $\delta_0$-neighborhood of 
the considered leaves are contained in $U_i$. The local isometry property 
(see Lemma~\ref{lemma_independence_cover}) implies that 
the quantities we define are independent from the chosen $\left\{U_i,p_i\right\}$. 
However, each leaf may have several lifts to
$\tilde U_i$ and the estimates for a pair of leaves depend on the lift we choose for each of them. 
For this reason we will often use the expression \emph{given two leaves $W_1$ and $W_2$, 
and given a lift
$\widetilde  W_1$, there is a lift $\widetilde W_2$ with the following property...}. \\

In the section above we established the unstable projection distance for any pair of points $x,y~\in~M$ 
at a distance bounded by $\mu~>~0$ given by Lemma~\ref{l.proj}. Let $(U_i,p_i)$ 
be any neighborhood of a holonomy cover such that $B_H(W^c(x),\delta_0)~\subset~U_i$ and 
let $\tilde{x}~\in~p^{-1}_i(x),\tilde{y}~\in~p^{-1}_i(y)$ be any lifted points with 
$d(\tilde{x},\tilde{y})~=~d(x,y)~<~\mu$, then we recall the following definition from Lemma~\ref{l.proj}: 
\begin{align*}
\Delta^u_{proj}(x,y)&=\sup_{\tilde{w}\in W^c(\tilde{y})}d^u(\tilde{w},
\Pi_{\tilde{x},\tilde{y}}(w))\;\mbox{where}\\
\Pi_{\tilde x,\tilde y}(w)&= W^u_{\mu_1}(\tilde w)\cap W^s_{\mu_1}(W^c(\tilde x))\;
\mbox{for all}\;\tilde w~\in~W^c(\tilde y). 
\end{align*}
This definition is independent from the points lifted to a holonomy cover as long as 
$d_H(\widetilde{W_1},\widetilde{W_2})~<~\mu$ for any choice of two center leaves. 
So we can denote the unstable 
projection distance between lifted center leaves by  
$$\Delta^u_{proj}(\widetilde{W_1},\widetilde{W_2}) = \Delta^u_{proj}(w_1,w_2)$$
where $\widetilde{W_1}$ and $\widetilde{W_2}$ are those lifts of $W_1$ and $W_2$ such that 
$d_H(\widetilde{W_1},\widetilde{W_2})~<~\mu$ 
and $w_1~\in~W_1,w_2~\in~W_2$ are 
any points with $d(w_1,w_2)~<~\mu$. The center leaf $\widetilde{W_2}$ is projected along the unstable 
foliation onto the center stable leaf $W^s(\widetilde{W_1})$. In the case of the stable projection 
distance the center leaf $\widetilde{W_1}$ is projected along the stable foliation onto the center 
unstable leaf $W^u(\widetilde{W_2})$. As the projection distance is not symmetric, we write the stable 
projection distance as
$$\Delta^s_{proj}(\widetilde{W_2},\widetilde{W_1})=\Delta^s_{proj}(w_2,w_1).$$

We recall (see Remark~\ref{r.proj}) that the projection distance
\begin{itemize}
 \item depends on the pair of lifts $\widetilde{W_1}$ and $\widetilde{W_2}$ 
 and has no meaning for the leaves $W_1$ and $W_2$;
 \item even for the lifts, it is not a distance: it is not symmetric 
 (in general, $\Delta^u_{proj}(\widetilde{W_1},\widetilde{W_2})~\neq~\Delta^u_{proj}(\widetilde{W_2},
 \widetilde{W_1})$),
 and it does not satisfy the triangle inequality. 
\end{itemize}

Now we can formulate the following lemma to compare the Hausdorff distance with the unstable and 
stable projection distances on the holonomy cover:  
\begin{lemma} Let $\mu~>~0$ be given by Lemma~\ref{l.proj} and $C~>~0$ by Lemma~\ref{dc_lemma1a}. For all 
center leaves $W_1,W_2$ with $\Delta_H(W_1,W_2)~<~\mu$ there exist lifts 
$\widetilde{W_1}, \widetilde{W_2}$ such that 
\begin{align*}
\frac{1}{C} \Delta^u_{proj}(\widetilde{W_1},\widetilde{W_2})
&\leq d_H(\widetilde{W_1},\widetilde{W_2}),\\
\frac{1}{C}\Delta^s_{proj}(\widetilde{W_2},\widetilde{W_1}) 
&\leq d_H(\widetilde{W_1},\widetilde{W_2}), \\
d_H(\widetilde{W_1},\widetilde{W_2})&\leq \Delta^s_{proj}(\widetilde{W_2},\widetilde{W_1}) 
+ \Delta^u_{proj}(\widetilde{W_1},\widetilde{W_2}).
\end{align*}
\label{l.metric}
\end{lemma}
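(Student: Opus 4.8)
The plan is to fix one suitable pair of lifts $\widetilde W_1,\widetilde W_2$ and then check the three inequalities separately: the two lower bounds are the ``Lipschitz'' estimates already built into the construction of the projection distances, while the upper bound rests on a matching between the unstable projection of points of $\widetilde W_2$ and the stable projection of points of $\widetilde W_1$. First I would fix the lifts. Since $\Delta_H(W_1,W_2)<\mu$ and $\mu$ is small (we may assume $\mu\le\mu_2$, the constant of Lemma~\ref{l.proj}, and in particular $\mu\le\tfrac12\delta_0$, so that the hypothesis is meaningful), the definition of $\Delta_H$ together with Corollary~\ref{corol_generality} provides a holonomy cover neighbourhood $V_i$ with $B_H(W_1,\delta)\subset V_i$ and lifts $\widetilde W_1\subset p_i^{-1}(W_1)$, $\widetilde W_2\subset p_i^{-1}(W_2)$ with $d_H(\widetilde W_1,\widetilde W_2)<\mu$; after shrinking $\mu$ if necessary every point of $\widetilde W_2$ lies within $\mu_1/C$ of $\widetilde W_1$ and conversely, so all the projections used below are well defined and unique by Lemma~\ref{dc_lemma2a} and Lemma~\ref{l.proj} (and its stable analogue, Remark~\ref{r.proj}). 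Fix $\tilde x\in\widetilde W_1$ and $\tilde y\in\widetilde W_2$; for $\tilde w\in\widetilde W_2$ write $\Pi^u(\tilde w)=W^u_{\mu_1}(\tilde w)\cap W^s_{\mu_1}(\widetilde W_1)$, and for $\tilde z\in\widetilde W_1$ write $\Pi^s(\tilde z)=W^s_{\mu_1}(\tilde z)\cap W^u_{\mu_1}(\widetilde W_2)$.

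For the first two inequalities I would use the estimate $d^u(\tilde w,\Pi^u(\tilde w))\le C\,d(\tilde w,\widetilde W_1)$ for $\tilde w\in\widetilde W_2$, which is the holonomy-cover analogue of Remark~\ref{dc_rem} and is already invoked in the proof of Claim~\ref{c.projection}, together with its stable counterpart $d^s(\tilde z,\Pi^s(\tilde z))\le C\,d(\tilde z,\widetilde W_2)$ for $\tilde z\in\widetilde W_1$ (obtained by applying the same argument to $f^{-1}$). Taking the supremum over $\tilde w$, resp. over $\tilde z$, and noting that $\sup_{\tilde w\in\widetilde W_2}d(\tilde w,\widetilde W_1)$ and $\sup_{\tilde z\in\widetilde W_1}d(\tilde z,\widetilde W_2)$ are each bounded by $d_H(\widetilde W_1,\widetilde W_2)$, one gets $\Delta^u_{proj}(\widetilde W_1,\widetilde W_2)\le C\,d_H(\widetilde W_1,\widetilde W_2)$ and $\Delta^s_{proj}(\widetilde W_2,\widetilde W_1)\le C\,d_H(\widetilde W_1,\widetilde W_2)$, i.e. the first two asserted inequalities.

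For the upper bound I would estimate $d_H(\widetilde W_1,\widetilde W_2)$ by controlling $d(\tilde w,\widetilde W_1)$ for $\tilde w\in\widetilde W_2$ and $d(\tilde z,\widetilde W_2)$ for $\tilde z\in\widetilde W_1$. Given $\tilde w\in\widetilde W_2$, put $\tilde p=\Pi^u(\tilde w)$ and let $\tilde z\in\widetilde W_1$ be the unique point (the point $\tilde u_{\tilde w}$ of Lemma~\ref{l.proj}) with $\tilde p\in W^s_{\mu_1}(\tilde z)$. The key observation is that $\tilde p$ is simultaneously the stable projection of $\tilde z$: since $\tilde p\in W^u_{\mu_1}(\tilde w)\subset W^u_{\mu_1}(\widetilde W_2)$ and $\tilde p\in W^s_{\mu_1}(\tilde z)$, one has $\tilde p\in W^s_{\mu_1}(\tilde z)\cap W^u_{\mu_1}(\widetilde W_2)$, and the uniqueness of the stable projection forces $\tilde p=\Pi^s(\tilde z)$. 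Hence, using $d\le d^u$ and $d\le d^s$ along leaves,
\[
d(\tilde w,\widetilde W_1)\le d(\tilde w,\tilde z)\le d^u(\tilde w,\tilde p)+d^s(\tilde p,\tilde z)\le\Delta^u_{proj}(\widetilde W_1,\widetilde W_2)+\Delta^s_{proj}(\widetilde W_2,\widetilde W_1).
\]
A symmetric argument, starting from $\tilde z\in\widetilde W_1$, setting $\tilde p=\Pi^s(\tilde z)$ and observing that $\tilde p=\Pi^u(\tilde w)$ for the point $\tilde w\in\widetilde W_2$ with $\tilde p\in W^u_{\mu_1}(\tilde w)$, bounds $d(\tilde z,\widetilde W_2)$ by the same sum. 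Taking the maximum of the two suprema yields $d_H(\widetilde W_1,\widetilde W_2)\le\Delta^s_{proj}(\widetilde W_2,\widetilde W_1)+\Delta^u_{proj}(\widetilde W_1,\widetilde W_2)$.

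I expect the identification $\Pi^u(\tilde w)=\Pi^s(\tilde z)$ to be the only delicate point: it is where the transversality of $E^s,E^c,E^u$ and, above all, the triviality of the holonomy on the cover (which makes these projections single-valued, via Lemmas~\ref{dc_lemma2a} and~\ref{l.proj}) are genuinely used, and one must check that every point involved stays within the radius $\mu_1$ for which those uniqueness statements apply --- this is arranged by choosing $\mu$ small enough at the outset. Everything else --- the passage from $\Delta_H$ to $d_H$ of the lifts, and the bookkeeping between the ambient distance and the intrinsic leaf distances, where only the harmless inequalities $d\le d^{s}$, $d\le d^{u}$ are needed --- is routine.
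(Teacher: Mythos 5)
Your proposal is correct and follows essentially the same route as the paper: the two lower bounds come from the $C$-Lipschitz estimate $d^u(\tilde w,\Pi^u(\tilde w))\le C\,d(\tilde w,\widetilde W_1)$ (Remark~\ref{dc_rem} / Lemma~\ref{l.proj}) followed by a supremum, and the upper bound from the triangle inequality through the common intersection point $W^s_{\mu_1}(\tilde z)\cap W^u_{\mu_1}(\tilde w)$. Your explicit verification that this point is simultaneously the unstable projection of $\tilde w$ and the stable projection of $\tilde z$, and your treatment of both sides of the Hausdorff supremum, merely spell out what the paper leaves implicit.
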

\begin{proof}
The first inequality comes then from Lemma~\ref{l.proj}: for any pair of points $(\tilde{x},\tilde y)~\in~\widetilde{W_1}\times\widetilde{W_2}$ with $d(\tilde x, \tilde y)~<~d_H(\widetilde{W_1},\widetilde{W_2})$ one has
$d^u(\tilde y, \Pi_{\tilde x,\tilde y}(\tilde y))~<~Cd(\tilde x,\tilde y)$. The second inequality is 
proved in a similar way.\\
The last inequality is implied by the triangle inequality. Let $\mu_1 > 0$ be given by Lemma~\ref{dc_lemma2a}. Given $\tilde y~\in~\widetilde{W_2}$ 
there is a unique $\tilde x~\in~\widetilde{W_1}$ so that 
$W^s_{\mu_1}(\tilde x)$ cuts $W^u_{\mu_1}(\tilde{y})$ in a point $\tilde w$.
Then $$d(\tilde x,\tilde y)<d^s(\tilde x,\tilde w)+d^u(\tilde w,\tilde y)
\leq\Delta^s_{proj}(\widetilde{W_2},\widetilde{W_1}) 
+ \Delta^u_{proj}(\widetilde{W_1},\widetilde{W_2}).$$
\end{proof}
\begin{rem}
If $\widetilde{W_1}~\subset~W^u_{\mu_1}(\widetilde{W_2})$, 
then $\Delta^s_{proj}(\widetilde{W_2},\widetilde{W_1}) =0$. 
Thus by Lemma~\ref{l.metric} one
has 
\begin{equation}\label{e.distances}
\frac{1}{C}\Delta^u_{proj}(\widetilde{W_1},\widetilde{W_2}) \leq d_H(\widetilde{W_1},\widetilde{W_2}) 
\leq \Delta^u_{proj}(\widetilde{W_1},\widetilde{W_2}).
\end{equation}
The analogous statement holds for the stable projection distance $\Delta_{proj}^s$. 
\end{rem}

As a direct consequence of Lemma~\ref{l.Delta-expansion} one gets: 
\begin{lemma} Let $\mu_1 > 0$ be given by Lemma~\ref{dc_lemma2a} and $\mu~>~0$ by Lemma~\ref{l.proj}. Let 
$W_1,W_2$ be any pair of center leaves and $\widetilde{W_1},\widetilde{W_2}$ be any lifts of these so that 
\begin{itemize}
 \item $\widetilde{W_1}~\subset~W^s_{\mu_1}(\widetilde{W_2})$,
 \item $\Delta^s_{proj}(\widetilde{W_1},\widetilde{W_2})~<~\mu$. 
\end{itemize}
Then for every $n~>~0$ there are lifts $\widetilde{f^n(W_1)}, \widetilde{f^n(W_2)}$ for which it holds:
\begin{itemize}
 \item $\widetilde{f^n(W_1)}~\subset~W^s_{\mu_1}(\widetilde{f^n(W_2)})$,
 \item $\Delta^s_{proj}(\widetilde{f^n(W_1)},\widetilde{f^n(W_2)})~<~\alpha^n \Delta^s_{proj}(\widetilde{W_1},\widetilde{W_2})$. 
\end{itemize}

One has a similar statement for iterates of  $f^{-1}$ and $\Delta^u_{proj}$.
\label{l.expansion}
\end{lemma}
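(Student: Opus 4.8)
The plan is to derive this from Lemma~\ref{l.Delta-expansion} by induction on $n$. First, note that Lemma~\ref{l.Delta-expansion} applied to $f^{-1}$ (which is again partially hyperbolic, with $E^s$ and $E^u$ interchanged, so that its ``unstable projection distance'' is precisely $\Delta^s_{proj}$) yields the following: there is $\mu>0$ such that for $x,y\in M$ with $d(x,y)<\mu$ and $d(f(x),f(y))<\mu$ one has $\Delta^s_{proj}(f(x),f(y))\leq\alpha\,\Delta^s_{proj}(x,y)$, since the minimal expansion of $Df^{-1}$ along $E^s$ is at least $1/\alpha$ (as $\|Df|_{E^s}\|\leq\alpha$). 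I would fix such a $\mu$, shrunk so that moreover $\mu<\mu_1$ and so that the local product structure statements used below apply (in particular $2\mu_1$ is smaller than the constant $\mu_*$ of the proof of Lemma~\ref{dc_lemma1}); this only strengthens the conclusion of the present lemma.

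Next I would check that the two hypotheses propagate under one step of $f$, which is what makes the induction run. Under $\widetilde{W_1}\subset W^s_{\mu_1}(\widetilde{W_2})$, for every $\widetilde w\in\widetilde{W_1}$ there is a \emph{unique} $\widetilde w_2\in\widetilde{W_2}$ with $\widetilde w\in W^s_{\mu_1}(\widetilde w_2)$: two such points would lie on a common stable leaf and on $\widetilde{W_2}$, hence coincide by the local product structure of Remark~\ref{r.distance}. This $\widetilde w_2$ is exactly the point $W^s_{\mu_1}(\widetilde w)\cap W^u_{\mu_1}(\widetilde{W_2})$ defining $\Delta^s_{proj}$, so $\Delta^s_{proj}(\widetilde{W_1},\widetilde{W_2})=\sup_{\widetilde w}d^s(\widetilde w,\widetilde w_2)<\mu$. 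Since $\|Df|_{E^s}\|\leq\alpha<1$, one has $f(\widetilde w)\in W^s_{\mu_1}(f(\widetilde w_2))$ with $d^s(f(\widetilde w),f(\widetilde w_2))\leq\alpha\,d^s(\widetilde w,\widetilde w_2)$, distances being computed in the holonomy cover where the $p_i$ are local isometries. To turn the images $f(\widetilde{W_1}),f(\widetilde{W_2})$ into honest lifts, pick a chart $U_j$ of the holonomy cover with $B_H(W^c(f(w)),\delta_0)\subset U_j$ and any lift $\widetilde{f(W_2)}$ of $f(W_2)$ in $\widetilde U_j$; the lift-tracking already performed inside the proof of Claim~\ref{c.projection} — using that $f$ conjugates the holonomy representations of $W^c$ and $f(W^c)$, and that $p_j$ is a local isometry on $\delta_0$-balls — shows that the continuous assignment $\widetilde w\mapsto\widetilde{f(w)}\in W^s_{\mu_1}(\widetilde{f(w_2)})$ sweeps out a single connected component $\widetilde{f(W_1)}$ of $p_j^{-1}(f(W_1))$. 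Hence $\widetilde{f(W_1)}\subset W^s_{\mu_1}(\widetilde{f(W_2)})$, and taking the supremum of the contracted distances together with the cover-independence of $\Delta^s_{proj}$ from Lemma~\ref{lemma_independence_cover} gives $\Delta^s_{proj}(\widetilde{f(W_1)},\widetilde{f(W_2)})\leq\alpha\,\Delta^s_{proj}(\widetilde{W_1},\widetilde{W_2})<\mu$.

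With these two observations, the lemma follows by iterating: the configuration at time $k$, namely $\widetilde{f^k(W_1)}\subset W^s_{\mu_1}(\widetilde{f^k(W_2)})$ with $\Delta^s_{proj}<\mu$, satisfies the hypotheses and produces the configuration at time $k+1$ with $\Delta^s_{proj}$ multiplied by at most $\alpha$; multiplying the $n$ resulting inequalities yields $\Delta^s_{proj}(\widetilde{f^n(W_1)},\widetilde{f^n(W_2)})\leq\alpha^n\,\Delta^s_{proj}(\widetilde{W_1},\widetilde{W_2})$, which is the asserted bound. The statement for $f^{-1}$ and $\Delta^u_{proj}$ is obtained verbatim, exchanging $\cF^s$ with $\cF^u$ and $\alpha$ with the contraction rate of $f^{-1}$ along $E^u$ (again at most $\alpha$).

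The step I expect to be the main obstacle is the middle one: choosing the lifts $\widetilde{f^k(W_1)},\widetilde{f^k(W_2)}$ \emph{compatibly} as the center leaves migrate from one chart of the holonomy cover to the next, so that the inclusion in the local stable manifold and the contraction estimate both survive along the whole forward orbit. This is not a genuinely new difficulty, however — it is precisely the bookkeeping carried out in the proof of Lemma~\ref{l.Delta-expansion} (Claim~\ref{c.projection}); once it is granted, the lemma reduces to the elementary facts that $f$ contracts local stable manifolds and that holonomy covering maps are local isometries.
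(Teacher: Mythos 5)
Your proof is correct and follows essentially the same route as the paper: the paper also reduces the statement to the contraction $d^s(f^n(x),f^n(y))\leq\alpha^n d^s(x,y)$ for each point of $\widetilde{W_1}$ paired with its unique stable projection on $\widetilde{W_2}$, and then handles the lift bookkeeping by the same path-continuation/local-isometry argument you delegate to Claim~\ref{c.projection} (the paper re-runs that argument explicitly to show the lift $\widetilde{f^n(W_2)}$ is independent of the base point). The only cosmetic differences are that you induct one step at a time where the paper applies $f^n$ directly, and your opening appeal to Lemma~\ref{l.Delta-expansion} for $f^{-1}$ is redundant given the direct stable-contraction computation you carry out anyway.
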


\begin{proof}
Fix a lift $\widetilde{f^n(W_1)}$. Consider $\tilde x~\in~\widetilde{W_1}$ and 
let $\tilde y~\in~\widetilde{W_2}$ be the unique point so that 
$\tilde y~\in~W^s_{\mu_1}(\tilde x)$. Let $x$ and $y$ be their projections. 
Then, recalling the definition of the stable projection distance, 
one gets$$d^s(f^n(x),f^n(y))~\leq~\alpha^n d^s(x,y)~\leq~\alpha^n 
\Delta^s_{proj}(\widetilde{W_1},\widetilde{W_2}).$$
Let $\widetilde{f^n(x)}$ be a lift of $f^n(x)$ in  $\widetilde{f^n(W_1)}$. 
As $d(f^n(x),f^n(y))~\leq~\delta_0$, there is a unique lift $\widetilde{f^n(y)}$ so that
$d(\widetilde{f^n(x)},\widetilde{f^n(y)})~<~\mu$, and indeed by Lemma~\ref{l.proj} equal to $d(f^n(x),f^n(y))$. 
We denote by $\widetilde{f^n(W_2)}$ the lift of $f^n(W_2)$ through $\widetilde{f^n(y)}$. 

Let us show that the lift $\widetilde{f^n(W_2)}$ does not depend on the choice of $\tilde x$:
given any $\tilde x_1~\in~\widetilde{W_1}$, we fix a path $\tilde x_t$ inside $ \widetilde{W_1}$  
joining $\tilde x$ to $\tilde x_1$. 
Then the projections on $\widetilde{W_2}$ along the stable leaves define a path $\tilde y_t$ in 
$\widetilde{W_2}$ with 
$\tilde y_t~\in~W^s_{\mu_1}(\tilde x_t)$. 

Denote $x_t,y_t$ the projections of $\tilde x_t,\tilde y_t$. Then 
\begin{equation}\label{e.expansion}
 d^s(f^n(x_t),f^n(y_t))<\alpha^n d^s(\tilde x_t,\tilde y_t)
\leq \alpha^n\Delta^s_{proj}(\widetilde{W_1},\widetilde{W_2}) \leq \delta_0.
\end{equation}

Consider the  continuous lifts $\widetilde{f^n(x_t)}~\in~\widetilde{f^n(W_1)}$ so that  
$\widetilde{f^n(x_0)}=\widetilde{f^n(x)}$. Then we have a unique choice of a lift  $\widetilde{f^n(y_t)}$
at distance less than $\delta_0$ of  $\widetilde{f^n(x_t)}$, and these lifts define a path in the lifts of 
$f^n(W_2)$ starting at $\widetilde{f^n(y_0)}$.  Therefore this path is contained in $\widetilde{f^n(W_2)}$. 
This proves that  the lift $\widetilde{f^n(W_2)}$ is independent of the choice of $\tilde x\in\widetilde{W_1}$.

By construction we have  $\widetilde{f^n(W_1)}~\subset~W^s_{\mu_1}(\widetilde{f^n(W_2)})$, and the 
inequality (\ref{e.expansion}) implies
 $$\Delta^s_{proj}(\widetilde{f^n(W_1)},\widetilde{f^n(W_2)})  
 < \alpha^n \Delta^s_{proj}(\widetilde{W_1},\widetilde{W_2}).$$

\end{proof}

\begin{rem}\label{uniquelift}
From the considerations above and the proof of Lemma~\ref{l.Delta-expansion} we can easily deduce the following: 
If $\widetilde{W_1}$ and $\widetilde{W_2}$ are two lifted center leaves with 
$d_H(\widetilde{W_1}, \widetilde{W_2})<\mu/\lambda$ ($\mu > 0$ given by Lemma~\ref{l.proj} and $\lambda > 1$ given as the upper bound of $\left\|Df\right\|$)  and if $\widetilde{f(W_1)}$ is 
a lift of $f(W_1)$, then there is a unique lift $\widetilde{f(W_2)}$ with the following property: 

For every $\tilde x\in \widetilde{W_1}$ and $\tilde{y}\in\widetilde{W_2}$ with $d(\tilde x,\tilde y)<\mu/\lambda$
and if $\widetilde{f(x)}$ is a lift of $f(x)$ on $\widetilde{f(W_1)}$ then there is a lift 
$\widetilde{f(y)}\in\widetilde{f(W_2)}$ with $d(\widetilde{f(x)},\widetilde{f(y)})=d(f(x),f(y)).$

\end{rem}

\subsection{Proof of proposition~\ref{p.shadowing} (and of the shadowing lemma)}

\begin{figure}
\includegraphics[width=0.8\textwidth]{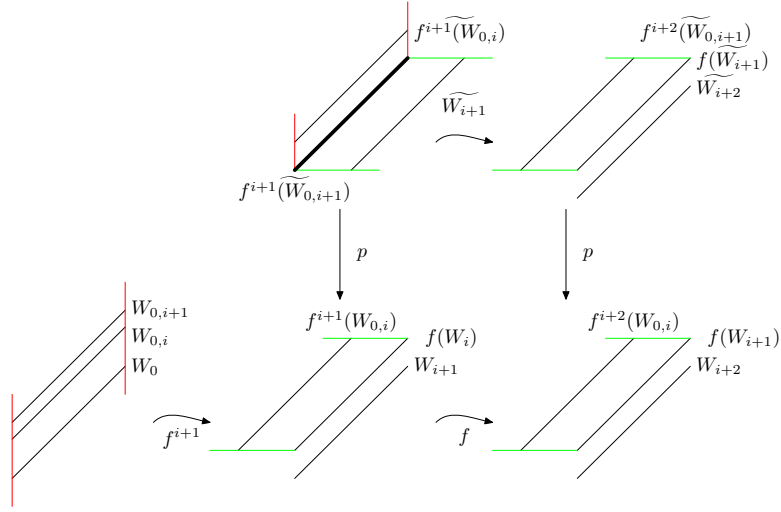}
\caption{Building the sequence $W_{0,i}$ by defining $\widetilde{f^{i+1}(W_{0,i+1})}$ in the holonomy cover.}
\label{fig:shadowing}
\end{figure}
\begin{proof} 
Let $\left\{W_i\right\}_{i \geq 0}$ be a sequence of center leaves of $f$ so 
that $\Delta_H(f(W_i),W_{i+1})~<~\varepsilon$, for every $i$. We build  a converging sequence $\left\{W_{0,i}\right\}_{i\geq 0}$ of center leaves in $W^u(W_0)$ 
so that the limit leaf will shadow the pseudo-orbit $\left\{W_i\right\}_{i \geq 0}$. We build it by induction, see Figure~\ref{fig:shadowing} for illustration.  \\

Before we state the exact induction we construct the first center leaves
$W_{0,1}$ and $W_{0,2}$ to motivate the inductive assumption.

By definition of  $\Delta_H(f(W_0),W_1)$, we can find lifts $\widetilde{f(W_0)}$ and $\widetilde{W_1}$ 
such that the minimal Hausdorff distance is attained, 
that is $d_H(\widetilde{f(W_0)},\widetilde{W_1})= \Delta_H(f(W_0),W_1)~<~\varepsilon$. 

By the choice of $\varepsilon$ we find then a unique center leaf 
$\widetilde{f(W_{0,1})}~=~W^u_{\mu_1}(\widetilde{f(W_0)})\cap W^s_{\mu_1}(\widetilde{W_1})$.
Inequality~(\ref{e.distances}) implies with $\widetilde{f(W_{0,1})} \subset W^u_{\mu_1}(\widetilde{f(W_0)})$ and $\widetilde{f(W_{0,1})} \subset W^s_{\mu_1}(\widetilde{W_1})$ the following two estimates
\begin{align}\label{e.deltas}
\Delta^u_{proj}(\widetilde{f(W_0)},\widetilde{f(W_{0,1})})\leq Cd_H(\widetilde{f(W_0)},\widetilde{f(W_{0,1})})
&< C\varepsilon \mbox{, and }\nonumber\\ 
\Delta^s_{proj}(\widetilde{W_1},\widetilde{f(W_{0,1})})\leq Cd_H(\widetilde{W_1},\widetilde{f(W_{0,1})})&< C\varepsilon.
\end{align} 
Consider points $\widetilde{f(x)}~\in~\widetilde{f(W_0)}$ and 
$\widetilde{f(y)}~\in~\widetilde{f(W_{0,1})}$ 
so that
$\widetilde{f(y)}~\in~W^u_{\mu_1}(\widetilde{f(x)})$: the uniqueness of 
the projection says that $\widetilde{f(y)}$ is unique in this local unstable manifold and 
$d^u(\widetilde{f(x)}, \widetilde{f(y)})~<~C\varepsilon$.

Let $f(x)$ and $f(y)$ be the projections of these points, and consider $x~\in~W_0$ and $y~\in~W_{0,1}$.
The distance $d(x,y)$ is bounded by $\alpha d(f(x),f(y))~<~C\varepsilon\alpha~<~\delta_0$.  Therefore,
any lift of $W_0$ 
determines a lift for $W_{0,1}$.
According to Lemma~\ref{l.expansion}, applied to $f^{-1}$ and  the leaves 
$\widetilde{f(W_0)}$, and $\widetilde{f(W_{0,1})}$,  one gets: 
\begin{equation}\label{e.estimation1}
\Delta^u_{proj}(\widetilde{W_0},\widetilde{W_{0,1}})\leq 
 \alpha \Delta^u_{proj}(\widetilde{f(W_0)},\widetilde{f(W_{0,1})}) < C\varepsilon\alpha.
\end{equation}
Further, we have 
\begin{align}
\Delta_H(W_1,f(W_{0,1})) & \leq d_H(\widetilde{W_1}, \widetilde{f(W_{0,1})}) \; 
\mbox{(by Corollary~\ref{corol_distance})} \nonumber\\
&\leq  \Delta^u_{proj}(\widetilde{f(W_0)},\widetilde{f(W_{0,1})})+
\Delta^s_{proj}(\widetilde{W_1},\widetilde{f(W_{0,1})})\; \mbox{(by Lemma~\ref{l.metric})}\nonumber\\
&= \Delta^s_{proj}(\widetilde{W_1},\widetilde{f(W_{0,1})})\;(\mbox{as }\;\widetilde{f(W_{0,1})}\;\mbox{ belongs to the stable manifold of }\widetilde{W_1})\nonumber\\
&<C\varepsilon \;\mbox{(by inequality~(\ref{e.deltas}))}.
\end{align}
Now we start the construction of $W_{0,2}$.  For that, one needs to consider the image $f^2(W_{0,1})$. 

We fix points $\tilde{w}_1~\in~\widetilde{W_1}$ and $\tilde{w}_2~\in~\widetilde{f(W_{0,1})}$ 
such that $d(\tilde{w}_1,\tilde{w}_2)~<~C\varepsilon$.  
Then we consider the projections under the covering map.
All lengths are preserved as the covering map is a local isometry, 
so $d(w_1, w_2)~<~C\varepsilon$. 

Now we consider the images $f^2(W_{0,1}), f(W_1), f(w_1),f(w_2)$ under $f$ 
and the next center leaf $W_2$ of the pseudo orbit. 
We choose lifts $\widetilde{f(W_1)}$ and $\widetilde{W_2}$ to the holonomy 
cover such that the minimal Hausdorff distance is attained,  so that  
$d_H(\widetilde{f(W_1)},\widetilde{W_2})~<~\varepsilon.$ 

Further, we choose the lift $\widetilde{f(w_1)}$ which lies in $\widetilde{f(W_1)}$.
There is a unique lift $\widetilde{f(w_2)}$ such that 
$$d(\widetilde{f(w_1)},\widetilde{f(w_2)})=d(f(w_1),f(w_2)).$$

Then we pick the lifted center leaf $\widetilde{f^2(W_{0,1})}$ which contains $\widetilde{f(w_2)}$. 
According to Lemma~\ref{l.expansion},  $\widetilde{f^2(W_{0,1})}$ is contained in the local stable manifold of $\widetilde{f(W_1)}$ 
and we have with inequality (\ref{e.estimation1})
\begin{equation}\label{e.estimation2}
d_H(\widetilde{f(W_1)}, \widetilde{f^2(W_{0,1})})\leq 
\Delta^s_{proj}(\widetilde{f(W_1)}, \widetilde{f^2(W_{0,1})}) < C\alpha\varepsilon < \varepsilon
\end{equation}
as $\alpha$ is chosen such that $C\alpha~<~1$.

This implies 
\begin{equation}\label{e.estimation3}
\Delta_H({f^2(W_{0,1})},{W_2})\leq d_H(\widetilde{f^2(W_{0,1})},\widetilde{W_2}) < (1+\alpha C)\varepsilon < 2\varepsilon.
\end{equation}
By the choice of $\varepsilon$ there exists a unique center 
leaf
$$\widetilde{f^2(W_{0,2})} = W^u_{\mu_1}(\widetilde{f^2(W_{0,1})}) \cap W^s_{\mu_1}(\widetilde{W_2}).$$ 

Thanks to inequalities (\ref{e.estimation2}) and (\ref{e.estimation3}) one gets  
\begin{align*}
\Delta^u_{proj}(\widetilde{f^2(W_{0,1})},\widetilde{f^2(W_{0,2})}) &< C\varepsilon(1 + 2C\alpha) < 2C\varepsilon\\
\Delta^s_{proj}(\widetilde{W_2},\widetilde{f^2(W_{0,2})}) &< C\varepsilon(1 + 2C\alpha) < 2C\varepsilon.
\end{align*} 

We consider the  leaf $W_{0,2}$  whose image by $f^2$ is the projection of $\widetilde{f^2(W_{0,2})}$.
Recall that $f^2(W_{0,2})$ is contained in the local unstable manifold of $f^2(W_{0,1})$. 
Thus, applying Lemma~\ref{l.expansion}, there is a lift $\widetilde{W_{0,2}}$ of $W_{0,2}$ so that 
\begin{equation}\label{e.estimation4}
\Delta^u_{proj}(\widetilde{W_{0,2}},\widetilde{W_{0,1}})
<\alpha^2 \Delta^u_{proj}(\widetilde{f^2(W_{0,2}}),\widetilde{f^2(W_{0,1}}))
< 2C\varepsilon\alpha^2.
\end{equation}
With the inequalities (\ref{e.estimation1}) and (\ref{e.estimation4}) one gets
\begin{equation}\label{e.estimation5}
\Delta_{H}({W_{0,2}},{W_0})
\leq \Delta^u_{proj}(\widetilde{W_{0,2}},\widetilde{W_0})
< C\varepsilon\alpha + 2C\varepsilon\alpha^2 
< 2C\varepsilon \sum_{k=0}^2 \alpha^k.
\end{equation}

In this way we have built the first two elements $W_{0,1}$ and $W_{0,2}$ 
in the center unstable leaf $W^u(W_0)$.
During this construction the following inductive statement got 
visible which we have to prove in order to conclude the construction of the sequence $(W_{0,i})_i$:
\begin{lemma}\label{l.inductive} For any $i~\geq~0$ there is $W_{0,i}$  so that 
\begin{description}
\item[(I)] $\Delta_H(f^{i+1}(W_{0,{i}}),W_{i+1})~<~2\varepsilon,$
\item[(II)] for all $0~\leq~j~\leq~i$ one has 
$$\Delta_H(f^j(W_{0,i}),W_j)< 2C\varepsilon \sum_{k=0}^{i-j} \alpha^k <\eta.$$
 
\end{description}

\end{lemma}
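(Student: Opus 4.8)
The plan is to prove Lemma~\ref{l.inductive} by induction on $i$, following exactly the pattern established in the explicit constructions of $W_{0,1}$ and $W_{0,2}$ above. The base cases $i=0,1,2$ have essentially been verified during the motivating construction, so I would start the induction assuming (I) and (II) hold for some $i\geq 2$ with a leaf $W_{0,i}$, and build $W_{0,i+1}$.

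First I would pass to a holonomy cover: choose lifts $\widetilde{f^{i+1}(W_{0,i})}$ and $\widetilde{W_{i+1}}$ realizing the minimal Hausdorff distance, so that $d_H(\widetilde{f^{i+1}(W_{0,i})},\widetilde{W_{i+1}})=\Delta_H(f^{i+1}(W_{0,i}),W_{i+1})<2\varepsilon$ by the inductive hypothesis (I). Since $2\varepsilon<\delta_0$, Lemma~\ref{dc_lemma2a} gives a unique center leaf $\widetilde{f^{i+1}(W_{0,i+1})}=W^u_{\mu_1}(\widetilde{f^{i+1}(W_{0,i})})\cap W^s_{\mu_1}(\widetilde{W_{i+1}})$, and inequality~(\ref{e.distances}) yields $\Delta^u_{proj}(\widetilde{f^{i+1}(W_{0,i})},\widetilde{f^{i+1}(W_{0,i+1})})<2C\varepsilon$ and $\Delta^s_{proj}(\widetilde{W_{i+1}},\widetilde{f^{i+1}(W_{0,i+1})})<2C\varepsilon$. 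Then I transport this back down to $M$ and then forward: $f^{i+1}(W_{0,i+1})$ lies in the local unstable manifold of $f^{i+1}(W_{0,i})$, so applying Lemma~\ref{l.expansion} to $f^{-1}$ along the $i+1$ iterates produces a lift $\widetilde{W_{0,i+1}}$ with
$$\Delta^u_{proj}(\widetilde{W_{0,i+1}},\widetilde{W_{0,i}})<\alpha^{i+1}\Delta^u_{proj}(\widetilde{f^{i+1}(W_{0,i+1})},\widetilde{f^{i+1}(W_{0,i})})<2C\varepsilon\alpha^{i+1}.$$
Combining with the inductive estimate (II) at $j=0$ via the triangle inequality for $d_H$ on the cover (Lemma~\ref{l.metric}) gives $\Delta_H(W_{0,i+1},W_0)<2C\varepsilon\sum_{k=0}^{i+1}\alpha^k$, which is part of (II) for $i+1$ at $j=0$.

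For the remaining estimates of (II) at $1\leq j\leq i+1$ and for (I) at level $i+1$, I would use Lemma~\ref{l.expansion} in the stable direction: since $f^{i+1}(W_{0,i+1})$ is in the local stable manifold of $\widetilde{W_{i+1}}$ with stable projection distance $<2C\varepsilon<\varepsilon$ (using $C\alpha<1$ and a uniform bound, and noting $2C\varepsilon<\mu$), applying $f$ once more contracts: $\Delta^s_{proj}(\widetilde{f^{i+2}(W_{0,i+1})},\widetilde{f(W_{i+1})})<2C\alpha\varepsilon<\varepsilon$, so $d_H(\widetilde{f^{i+2}(W_{0,i+1})},\widetilde{f(W_{i+1})})<\varepsilon$, and then $\Delta_H(f^{i+2}(W_{0,i+1}),W_{i+2})\leq d_H(\widetilde{f^{i+2}(W_{0,i+1})},\widetilde{W_{i+2}})<(1+\alpha C)\varepsilon<2\varepsilon$, which is (I). Similarly, the intermediate estimates at $1\leq j\leq i$ follow because $f^j(W_{0,i+1})$ differs from $f^j(W_{0,i})$ only by the correction introduced at the last step, which the stable contraction damps by $\alpha^{(i+1)-j}$; carefully tracking the geometric series (and using $C\alpha<1$ so that the contributions telescope into $2C\varepsilon\sum_{k=0}^{(i+1)-j}\alpha^k$) gives exactly (II). The bound $<\eta$ follows from $\varepsilon<\frac{(1-\alpha)\eta}{2C}$ since $2C\varepsilon\sum_{k\geq 0}\alpha^k=\frac{2C\varepsilon}{1-\alpha}<\eta$.

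Once Lemma~\ref{l.inductive} is proved, I conclude Proposition~\ref{p.shadowing}: the sequence $(W_{0,i})_i$ lies in the compact leaf space $W^u(W_0)\subset M/\cF^c$; by (II) at $j=0$ it is Cauchy for $\Delta_H$ (consecutive terms differ by $O(\alpha^i)$), hence converges to a center leaf $W$ with $W\subset W^u(W_0)$. Passing to the limit in (II), continuity of $f^j$ and of $\Delta_H$ gives $\Delta_H(f^j(W),W_j)\leq\eta$ for every $j\geq 0$, which is the claimed $\eta$-shadowing; the bi-infinite case then follows from the Remark on extending positive pseudo-orbit shadowing. The main obstacle, as in the displayed construction of $W_{0,2}$, is bookkeeping the lifts: at each step one must verify that the chosen lifts of $f^j(W_{0,i+1})$ are consistent with those of $f^j(W_{0,i})$ (so that the distance estimates actually compose), which relies on the uniqueness-of-nearby-lift statements in Lemma~\ref{l.proj}, Lemma~\ref{l.expansion}, and Remark~\ref{uniquelift}, and on keeping every relevant distance below $\delta_0$ throughout the $i+1$ iterations — this is where the precise choice $2C\alpha<1$ and the smallness of $\varepsilon$ are essential.
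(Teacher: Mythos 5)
Your proposal is correct and follows essentially the same route as the paper: the same inductive construction of $\widetilde{f^{i+1}(W_{0,i+1})}$ as the unique intersection $W^u_{\mu_1}(\widetilde{f^{i+1}(W_{0,i})})\cap W^s_{\mu_1}(\widetilde{W_{i+1}})$, the same use of Lemma~\ref{l.expansion} backward (unstable projection distance contracted by $\alpha^{i+1-j}$) to get~(II) and forward (stable projection distance contracted by $\alpha$, with $2C\alpha<1$) to get~(I), and the same lift-consistency bookkeeping via Lemma~\ref{l.proj} and Remark~\ref{uniquelift}. The only blemish is the slip ``stable projection distance $<2C\varepsilon<\varepsilon$'' (which would need $C<\tfrac12$); the correct chain, which you state immediately afterwards, is $\Delta^s_{proj}<2C\alpha\varepsilon<\varepsilon$.
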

\begin{proof}[Proof of Lemma~\ref{l.inductive}]
For $i~=~0$ we set $W_0~=~W_{0,0}$. For $i~=~1$ we have already proved properties (I) and (II) above. 
It remains to prove the inductive step: we assume Lemma~\ref{l.inductive} is done for $0,\dots, i$ 
and we build $W_{0,i+1}$ satisfying the announced inequalities. 

According to (I) we have 
$$\Delta_H(f^{i+1}(W_{0,i}), W_{i+1})<2\varepsilon.$$
So there are lifts $\widetilde{W_{i+1}}$ and $\widetilde{f^{i+1}(W_{0,i})}$ whose Hausdorff 
distance realizes 
the infimum, and is therefore bounded by $2\varepsilon$. 

Consequently, we can construct $W_{0,i+1}$ with the help of the unique center leaf  
$$\widetilde{f^{i+1}(W_{0,i+1})} := 
W^u_{\mu_1}(\widetilde{f^{i+1}(W_{0,i})})\cap W^s_{\mu_1}(\widetilde{W_{i+1}})$$ which satisfies
\begin{align}\label{e.deltas_i}
\Delta^u_{proj}(\widetilde{f^{i+1}(W_{0,i+1})}, \widetilde{f^{i+1}(W_{0,i})})&< 2C\varepsilon \mbox{ and }\\
 \Delta^s_{proj}(\widetilde{f^{i+1}(W_{0,i+1})}, \widetilde{W_{i+1}})&<2C\varepsilon.
\end{align}
Then -- as a consequence of  Claim~\ref{c.projection}  -- $W_{0,i+1}$ is the center leaf whose image by $f^{i+1}$ is the projection of 
$\widetilde{f^{i+1}(W_{0,i+1})}$.  Let us show that $W_{0,i+1}$ satisfies the inequalites announced in (I) and (II). 
We prove inequality (I) of Lemma~\ref{l.inductive} by the following claim:
\begin{claim}\label{c.firstitem} $\Delta_H(f^{i+2}(W_{0,i+1}),W_{i+2})< 2\varepsilon$.
\end{claim}
\begin{proof}[Proof of Claim~\ref{c.firstitem}] 
Consider lifts $\widetilde{f(W_{i+1})}$ and $\widetilde{W_{i+2}}$ such that the minimal Hausdorff 
distance in the holonomy cover is attained, so 
$$d_H(\widetilde{f(W_{i+1})}, \widetilde{W_{i+2}}) < \varepsilon.$$ 

The center leaf $\widetilde{f^{i+1}(W_{0,i+1})}$ lies by construction in the local center stable leaf 
of $\widetilde{W_{i+1}}$, so we have $\Delta^u_{proj}(\widetilde{f^{i+1}(W_{0,i+1})},\widetilde{W_{i+1}})=0$.
According to Lemma~\ref{l.expansion}, there is a lift of $f(f^{i+1}(W_{0,i+1}))$ so that 
\begin{align*}
 d_H(\widetilde{f^{i+2}(W_{0,i+1})},\widetilde{f(W_{i+1})})
 &\leq\Delta^s_{proj}(\widetilde{f^{i+2}(W_{0,i})},\widetilde{f(W_{i+1})})\;\\
 &(\mbox{by Lemma~\ref{l.metric} and}\;\Delta^u_{proj}(\widetilde{f^{i+1}(W_{0,i+1})},\widetilde{W_{i+1}})=0)\\
&\leq \alpha \Delta^s_{proj}(\widetilde{f^{i+1}(W_{0,i+1})}, \widetilde{W_{i+1}}) \;
(\mbox{by Lemma~\ref{l.expansion}})\\
&< 2C\varepsilon\alpha\;(\mbox{by inequality~(\ref{e.deltas_i}}))\\
&< \varepsilon.
\end{align*}

The triangle inequality implies 
$$ \Delta_H({f^{i+2}(W_{0,i})},{W_{i+2}})\leq d_H(\widetilde{f^{i+2}(W_{0,i})},\widetilde{W_{i+2}})<2\varepsilon, $$
which proves the claim. 
\end{proof}
The claim proved the first inequality (I) of the lemma. Let us now prove (II):
\begin{claim}\label{c.seconditem}
For all $0~\leq~j~\leq~i+1$ of Lemma~\ref{l.inductive} one has 
$$\Delta_H(f^j(W_{0,i+1}),W_j)< 2C\varepsilon \sum_{k=0}^{i+1-j} \alpha^k <\eta.$$
 
\end{claim}
\begin{proof}[Proof of Claim~\ref{c.seconditem}]

For any $0~\leq~j~\leq~i$, the induction hypothesis implies that  
there are lifts $\widetilde{f^j(W_{0,i})}$ and $\widetilde{W_j}$ such that 
$$d_H(\widetilde{f^j(W_{0,i})},\widetilde{W_j}) =\Delta_H(f^j(W_{0,i}),W_j)< 2C\varepsilon \sum_{k=0}^{i-j} \alpha^k.$$

On the other hand one has that $\widetilde{f^{i+1}(W_{0,i+1})}$ is contained in the unstable manifold of 
$\widetilde{f^{i+1}(W_{0,i})}$ and - due to inequality~(\ref{e.deltas_i}) - 
$\Delta^u_{proj}(\widetilde{f^{i+1}(W_{0,i+1})},\widetilde{f^{i+1}(W_{0,i})} )~<~2C\varepsilon$.

According to Lemma~\ref{l.expansion} one gets that there is a lift  $\widetilde{f^j(W_{0,i+1})}$ of 
$f^j(W_{0,i+1})$ so that 
\begin{align*}
\Delta_{H}(f^{j}(W_{0,i+1}),f^{j}(W_{0,i}) )
&\leq \Delta^u_{proj}(\widetilde{f^{j}(W_{0,i+1})},\widetilde{f^{j}(W_{0,i})} )\\
&\leq \alpha^{i+1-j}\Delta^u_{proj}(\widetilde{f^{i+1}(W_{0,i+1})},\widetilde{f^{i+1}(W_{0,i})} )\\
&<2C\alpha^{i+1-j}\varepsilon.
\end{align*}

As a consequence the triangle inequality implies: 
\begin{align*} 
\Delta_{H}(f^{j}(W_{0,i+1}),W_j )
&\leq 2C\varepsilon \sum_{k=0}^{i-j} \alpha^k+2C\varepsilon\alpha^{i+1-j}\\
&=2C\varepsilon \sum_{k=0}^{i+1-j} \alpha^k.
\end{align*}
This gives the announced inequality for $0~\leq~j~\leq~i$. 

For $j~=~i+1$ the announced inequality says 
$\Delta_{H}(f^{i+1}(W_{0,i+1}),W_{i+1} )~\leq~2C\varepsilon$.  
This comes from the fact that - according to inequality~(\ref{e.deltas_i}) - 
$$\Delta_{H}(f^{i+1}(W_{0,i+1}),W_{i+1} )
\leq \Delta^s_{proj}(\widetilde{f^{i+1}(W_{0,i+1})},\widetilde{W_{i+1}} )\leq 2C\varepsilon.$$
This finishes the proof of the claim.
\end{proof}
The proof of Claim~\ref{c.seconditem} gives us (II) of Lemma~\ref{l.inductive} which finishes its proof.
 
\end{proof}

Let $W_{\infty}$ be the limit point of the sequence $(W_{0,i})_{i\geq 0}$, then for every fixed $j~\geq~0$ item (II) of Lemma~\ref{l.inductive} implies 
$$\Delta_H(f^j(W_{\infty}),W_j) = \lim_{i \rightarrow \infty}\Delta_H(f^j(W_{0,i}),W_j) < 2C\varepsilon \frac{1}{1-\alpha} < \eta.$$
Therefore $W_{\infty}$ is the center leaf whose orbit stays $\eta$-close to the positive pseudo orbit $(W_j)_{j \geq 0}$, i.e. it is its shadowing orbit. This concludes the proof of Proposition~\ref{p.shadowing}.

\end{proof}

\subsection{Uniqueness of the shadowing and existence of periodic center leaves}

If the center foliation has non-trivial holonomy, there are in general an uncountable set of 
center-leaves $\delta$-shadowing the same $\varepsilon$-pseudo orbit $\left\{W_i\right\}_i$ of center leaves. Therefore the statement of 
Theorem~\ref{shadowing_lemma} is not enough for ensuring the existence of periodic center leaves 
shadowing the periodic pseudo orbits.

However the lack of uniqueness of the shadowing comes from the choice of an appropriate lift we have to do during the construction of the shadowing orbit.
The idea of this section is to enhance the pseudo orbits by the possible choices, 
and then to recover the uniqueness in that setting: 

\begin{otherthm}\label{t.unique} Let $f$ be a partially hyperbolic diffeomorphism with a
uniformly compact invariant center foliation. 
Let $\left\{U_j,p_j\right\}_j$ be a holonomy cover and $\delta_0$ (as in Lemma~\ref{lemma_independence_cover}). 

Then there is $\eta_0$  so that for any $0<\eta<\eta_0$ there is $\varepsilon>0$ such that
\begin{itemize}
 \item for any $\varepsilon$-pseudo orbit $\left\{W_i\right\}_{i\in\mathbb{Z}}$ 
 for the modified Hausdorff distance $\Delta_H$
 \item for any $i\in\mathbb{Z}$ and any $j$ so that the $B_H(W_i,\delta_0)\subset U_j$,
 \item for any lifts of $W_i$ and $f(W_{i-1})$ with 
 $d_H(\widetilde{W_i},\widetilde{f(W_{i-1})})<\varepsilon$,
\end{itemize}

there is a unique center leaf $W$ admitting lifts $\widetilde{f^i(W)}$ so that 
\begin{itemize}
 \item $\Delta_H(f^i(W)), W_i)<\eta$
 \item $d_H(\widetilde{f^i(W))}, \widetilde{W_i})<\eta$
 \item $\widetilde{f^{i+1}(W))}$ is the lift of $f(f^{i}(W))$ associated to the pair of lifts
 $(\widetilde{f^i(W))}, \widetilde{W_i})$ and the lift $\widetilde{f(W_i)}$ by Remark~\ref{uniquelift}.
\end{itemize}
\end{otherthm}
The proof is indeed the proof we wrote for Theorem~\ref{shadowing_lemma}. 

We are now ready for proving Theorem~\ref{t.periodic} which states that the periodic center leaves are dense 
in the set of center leaves which are chain recurrent for the quotient dynamics

\begin{proof}[Proof of Theorem~\ref{t.periodic}]It is enough to prove that any periodic $\varepsilon$-pseudo orbit
of center leaves is $\eta$-shadowed by (at least one) a periodic center leaf. 
For that, just consider a periodic choice of lifts of the pseudo orbit in Theorem~\ref{t.unique}.  
\end{proof}

\section{Plaque expansivity}\label{sec:plaque}

The aim of this section is to prove the plaque expansivity. Let us recall the precise definition. 

\begin{defn}
Let $f$ be a partially hyperbolic diffeomorphism with an $f$-invariant center foliation $\cF^c$.
We call $(f,\cF^c)$ \emph{plaque expansive} if there is $\eta~>~0$ with the
following property:
consider  $\eta$-pseudo orbits $(x_i),(y_i)$ preserving plaques of the center foliation, i.e. 
$f(x_i)$ lies in the same plaque as $x_{i+1}$ with $d(f(x_i),x_{i+1})~<~\eta$ and $f(y_i)$ in the plaque of $y_{i+1}$, $d(f(y_i),y_{i+1})~<~\eta$.
Assume that $d(x_i,y_i)~<~\eta$ for every $n~\in~\ZZ$. Then $x_0$ lies in the same center plaque as $y_0$.
\end{defn}
\begin{rem}
\cite{HPS70} proves that every $C^1$-center foliation is plaque expansive. 
\end{rem}
The aim of this section is to prove Theorem~\ref{plaquex}:  any invariant uniformly compact center foliation is plaque expansive.
\begin{figure}
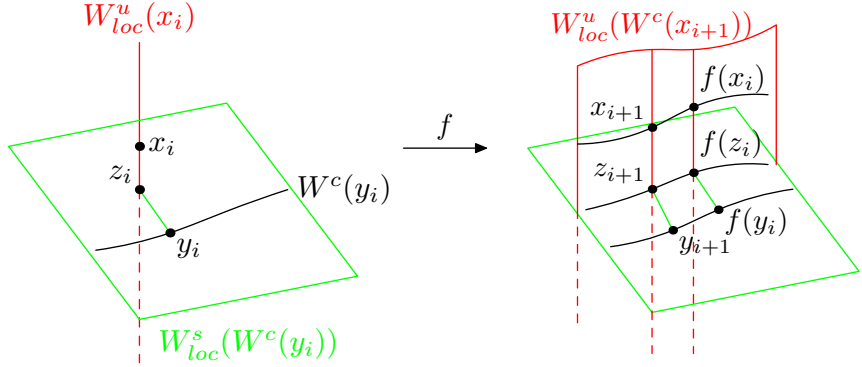

\begin{subfigure}{0.4\textwidth}
\includegraphics[width=\textwidth]{plaque.mps}
\end{subfigure}
\begin{subfigure}{0.4\textwidth}
\includegraphics[width=1.2\textwidth]{plaque_iterate.mps}
\end{subfigure}

\caption{Two pseudo-orbits $(x_i)_i,(y_i)_i$ and construction of the pseudo-orbit $(z_i)_i$ where $z_i=W^u_{loc}(x_i)\cap W^s_{loc}(W^c(y_i)$ is the projection of $x_i$ on the local center stable manifold of $y_i$.}\label{fig:plaque}
\label{fig:plaque}
\end{figure}
We consider some very small $\eta~>~0$ (we will define $\eta$ later) and $\eta$-pseudo-orbits $(x_i),(y_i)$ with central jumps, and remaining at a distance $d(x_i,y_i)$ less than $\eta$, see Figure~\ref{fig:plaque} for an illustration and the construction of the pseudo orbit $(z_i)$. 
We assume that $y_0$ does not belong to the center stable manifold of $x_0$.  

First of all,
\begin{lemma}\label{l.pseudoproj}\label{l.pseudoplaque}
 let $z_i~\in~W^u_{loc}(x_i)$ be the projection of $x_i$ on the local center stable manifold of $y_i$.
 Then the sequence $(z_i)$ is also a pseudo orbit with jumps in the center leaves, 
 remaining close to the sequences $(x_i)$ and $(y_i)$. 
 
 Analogously, let $w_i~\in~W^s_{loc}(y_i)$ be the projection of $y_i$ 
 on the local center unstable manifold of $x_i$. Then $(w_i)$ is a pseudo-orbit with 
 jumps in the center leaves, remaining close to $(x_i)$, $(y_i)$, and therefore to $(z_i)$. 
 
 Furthermore, for any $i$, the points $z_i$ and $w_i$ belong to the same 
center plaque. 
 
\end{lemma}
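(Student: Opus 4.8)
The plan is to realize $z_i$ and $w_i$ as the intersection points furnished by Lemma~\ref{dc_lemma1}, to control all the relevant distances by $\eta$, and then to exploit the $f$-invariance of the center stable and center unstable foliations together with a local product structure between them.

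First I would fix $\mu_0>0$ given by Lemma~\ref{dc_lemma1}, $C>0$ given by Lemma~\ref{dc_lemma1a}, and $\lambda>1$ an upper bound for $\|Df\|$, and assume $\eta>0$ small enough compared with these constants (and with the constant $\mu_3$ below). Since $d(x_i,y_i)<\eta$, Lemma~\ref{dc_lemma1} yields the unique point
$$z_i:=W^u_{loc}(x_i)\cap W^{cs}_{loc}(y_i)=W^u_{\mu_0}(x_i)\cap\Bigl(\bigcup_{z\in W^c_{\mu_0}(y_i)}W^s_{\mu_0}(z)\Bigr),$$
and it satisfies $d^u(x_i,z_i)\le C\eta$ and $d^s(z_i,W^c_{loc}(y_i))\le C\eta$, so $d(x_i,z_i),d(y_i,z_i)=O(\eta)$. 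Applying Lemma~\ref{dc_lemma1} to $f^{-1}$ gives the unique point $w_i:=W^s_{loc}(y_i)\cap W^{cu}_{loc}(x_i)$, with $d^s(y_i,w_i)\le C\eta$ and $d^u(w_i,W^c_{loc}(x_i))\le C\eta$. Thus $(z_i)$ and $(w_i)$ stay within $O(\eta)$ of $(x_i)$ and $(y_i)$, hence of each other.

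The crucial step is a local product statement: \emph{there is $\mu_3>0$ such that for $d(p,q)<\mu_3$ the set $W^{cu}_{loc}(p)\cap W^{cs}_{loc}(q)$ is a single center plaque.} This set is nonempty by the existence half of Lemma~\ref{dc_lemma1}; it is $\cF^c$-saturated because, by dynamical coherence (Theorem~\ref{theorem_dc}), the foliations $\cF^{cu}$ and $\cF^{cs}$ are subfoliated by $\cF^c$; and it reduces to a single center plaque because the stable holonomy $z\mapsto W^s_{loc}(z)\cap W^{cu}_{loc}(p)$ defines, via Lemma~\ref{dc_lemma1} applied to $f^{-1}$ and via the transversality of $\cF^s$ and $\cF^c$ recorded in Remark~\ref{r.distance}, a continuous injection of $W^c_{loc}(q)$ onto $W^{cu}_{loc}(p)\cap W^{cs}_{loc}(q)$, so that this intersection is a connected $\dim E^c$-manifold which, being $\cF^c$-saturated, coincides with one plaque. (Equivalently one may invoke the remark after Lemma~\ref{dc_lemma1}, by which $W^{cu}_{loc}(p)$ and $W^{cs}_{loc}(q)$ are transverse submanifolds tangent to $E^{cu}$ and $E^{cs}$.) Since $W^u_{loc}(x_i)\subset W^{cu}_{loc}(x_i)$ and $W^s_{loc}(y_i)\subset W^{cs}_{loc}(y_i)$, both $z_i$ and $w_i$ lie in $W^{cu}_{loc}(x_i)\cap W^{cs}_{loc}(y_i)$, which is a single center plaque; this is already the last assertion of the lemma.

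Finally I would check that $(z_i)$ is a pseudo-orbit with jumps in the center leaves. From $z_i\in W^u_{loc}(x_i)$ one gets $f(z_i)\in W^u_{loc}(f(x_i))$ with $d(f(x_i),f(z_i))\le\lambda C\eta$; as $f(x_i)$ lies in the center plaque of $x_{i+1}$ with $d(f(x_i),x_{i+1})<\eta$, the leaf $W^u(f(x_i))$ is contained in $W^{cu}(x_{i+1})$ and $d(f(z_i),x_{i+1})=O(\eta)$, so $f(z_i)\in W^{cu}_{loc}(x_{i+1})$. Likewise $z_i\in W^{cs}_{loc}(y_i)\subset W^{cs}(y_i)$ and $\cF^{cs}$ is $f$-invariant, so $f(z_i)\in W^{cs}(f(y_i))=W^{cs}(y_{i+1})$, and since $d(f(z_i),y_{i+1})=O(\eta)$ we get $f(z_i)\in W^{cs}_{loc}(y_{i+1})$. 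Hence $f(z_i)$ and $z_{i+1}$ both lie in $W^{cu}_{loc}(x_{i+1})\cap W^{cs}_{loc}(y_{i+1})$, which by the crucial step is a single center plaque, so they lie in the same center plaque; moreover $d(f(z_i),z_{i+1})\le d(f(z_i),x_{i+1})+d(x_{i+1},z_{i+1})=O(\eta)$ using $d^u(x_{i+1},z_{i+1})\le C\eta$. The argument for $(w_i)$ is symmetric, exchanging $\cF^s\leftrightarrow\cF^u$ and $\cF^{cs}\leftrightarrow\cF^{cu}$. The main obstacle is the local product statement of the third paragraph: the distance bookkeeping is routine, but showing that the local center stable and center unstable manifolds meet in \emph{exactly one} center plaque---rather than in an \emph{a priori} larger $\cF^c$-saturated set---is precisely where the uniqueness in Lemma~\ref{dc_lemma1}, i.e.\ the partial hyperbolicity, enters.
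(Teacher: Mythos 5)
Your proof is correct and follows essentially the same route as the paper: both arguments rest on the uniqueness of the intersection point from Lemma~\ref{dc_lemma1}, the $f$-invariance of the coherent foliations, and the fact that $W^{cu}_{loc}(x_i)\cap W^{cs}_{loc}(y_i)$ is a single center plaque. The only organizational difference is that you isolate this local product statement as a standalone step and derive all three claims from it, whereas the paper invokes it implicitly for the pseudo-orbit property and establishes the ``same plaque'' claim by the equivalent device of projecting a path in the center plaque of $y_i$ onto $W^{cu}_{loc}(x_i)$.
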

\begin{proof} We need to prove that $z_{i+1}$ is on the same center leaf as $f(z_i)$.   
This is due to the dynamical coherence: the local unstable manifolds through the center
plaque containing 
$x_{i+1}$ and $f(x_i)$ intersects the local stable manifold through the center plaque containing
$y_{i+1}$ and
$f(y_i)$ in a center plaque containing $z_{i+1}$ and a point which is the intersection 
of the local 
unstable of $f(x_i)$ with the local center stable of $f(y_i)$.  
Notice that these local invariant manifolds 
are the image of the local invariant manifolds (with size multiplied by a bounded factor (bounded by 
$\|Df\|$)).  Therefore, these local invariant manifolds intersect on $f(z_i)$. 
Thus $f(z_i)$ belongs to 
the same center plaque as $z_{i+1}$. It works in the same way for $(w_i)$.

Let us now prove the last claim of the lemma.
 There is $u_i$ in the center plaque of $y_i$ so that $z_i~\in~W^s_{\mu_1}(u_i)$.  Therefore, $z_i$ is 
 the projection of $u_i$ on the center unstable manifold of $x_i$. 
 Consider a small path $u_{i,t}$ joining $u_i$ to $y_i$ in the  center plaque of $y_i$.  Then the projection of 
 $u_{i,t}$ on the center unstable plaque of $x_i$ defines a small path in the intersection of the local 
 center stable manifold of $y_i$ with the local center unstable manifold of $x_i$. Therefore this small path is contained 
 in a plaque of a center leaf. Thus $w_i$ to $z_i$ are joined by a path inside a center plaque.
\end{proof}

Now, Theorem~\ref{plaquex} is a direct consequence of the following proposition, where $\mu >0$ is given by Lemma~\ref{l.Delta-expansion}:
\begin{prop}\label{p.plaquex} \begin{itemize}
\item For any pair of $\mu$-pseudo orbits $(x_i)$, $(z_i)$  
preserving the center plaques, and such that $z_i~\in~W^u_{\mu}(x_i)$ for any $i$, one has 
$x_i=z_i$ for any $i$.

\item For any pair of $\mu$-pseudo-orbits $(y_i)$, $(w_i)$  
preserving the center plaques, and such that $w_i~\in~W^s_{\mu}(y_i)$ for any $i$, one has 
$y_i=w_i$ for any $i$.
\end{itemize}
\end{prop}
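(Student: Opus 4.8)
The two items are symmetric: replacing $f$ by $f^{-1}$ exchanges $\cF^s$ and $\cF^u$, hence $\Delta^u_{proj}$ and $\Delta^s_{proj}$, and turns the expansion factor $\beta>1$ on $E^u$ into the expansion factor $\alpha^{-1}>1$ on $E^s$. So it suffices to prove the first item, and the plan is to argue by contradiction: assuming $z_0\neq x_0$ we will contradict the uniform bound $\Delta^u_{proj}<\mu_1$ from Remark~\ref{r.proj} by showing that the unstable projection distance between the center leaves of the two pseudo‑orbits grows by the definite factor $\beta$ at each step.

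First I would record the two facts that make this work. Since the two pseudo‑orbits preserve center plaques, $W^c(x_{i+1})=W^c(f(x_i))=f(W^c(x_i))$ and likewise $W^c(z_{i+1})=f(W^c(z_i))$; thus $(W^c(x_i))_i$ and $(W^c(z_i))_i$ are genuine $F$‑orbits in $M/\cF^c$. Moreover $z_i\in W^u_\mu(x_i)$ together with dynamical coherence and completeness (Proposition~\ref{p.complete} applied to $f^{-1}$) gives $W^c(z_i)\subset W^u_{\mu_1}(W^c(x_i))$ for every $i$, once $\mu$ is chosen small enough relative to $\mu_1$. I then pass to a holonomy cover $\{U_j,p_j\}$: lifting $x_0$ to $\tilde x_0$ and lifting the short unstable arc from $x_0$ to $z_0$ produces $\tilde z_0\in W^u_\mu(\tilde x_0)$ with $d^u(\tilde x_0,\tilde z_0)=d^u(x_0,z_0)$, and propagating these lifts forward — applying $f$, and taking across each center‑plaque jump the unique nearby lift as in Remark~\ref{uniquelift} — yields lifted leaves $\widetilde{W^c(x_i)}$ and $\widetilde{W^c(z_i)}$ for all $i$. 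Evaluating $\Delta^u_{proj}$ on the point $\tilde z_i$ and using uniqueness of the local product intersection (Lemma~\ref{dc_lemma2a}) shows $\Delta^u_{proj}(\widetilde{W^c(x_i)},\widetilde{W^c(z_i)})\geq d^u(x_i,z_i)$; in particular this quantity is $\geq d^u(x_0,z_0)>0$.

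The core step is the expansion estimate, for these propagated lifts,
\[
\Delta^u_{proj}\bigl(\widetilde{W^c(x_{i+1})},\widetilde{W^c(z_{i+1})}\bigr)\ \geq\ \beta\,\Delta^u_{proj}\bigl(\widetilde{W^c(x_i)},\widetilde{W^c(z_i)}\bigr).
\]
This is Lemma~\ref{l.Delta-expansion} applied to the pair of points $(x_i,z_i)$, combined with the fact — which is exactly Claim~\ref{c.projection} in the proof of Lemma~\ref{l.Delta-expansion} — that the unstable projection of a center leaf commutes with $f$ and with the passage to the appropriate lift across the center‑plaque jump, so that the projection distance computed at $(f(x_i),f(z_i))$ is the same as the one computed at $(x_{i+1},z_{i+1})$. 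Iterating yields $\Delta^u_{proj}(\widetilde{W^c(x_n)},\widetilde{W^c(z_n)})\geq\beta^n d^u(x_0,z_0)\to\infty$, contradicting $\Delta^u_{proj}<\mu_1$. Hence $d^u(x_0,z_0)=0$, i.e. $z_0=x_0$, and applying the same argument to every shift of the pair of pseudo‑orbits gives $z_i=x_i$ for all $i$. The second item follows verbatim with $f^{-1}$ in place of $f$, the stable projection distance $\Delta^s_{proj}$, and iteration backwards in time.

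I expect the main obstacle to be the bookkeeping of the lifts to the holonomy cover: checking that the forward propagation of the $\tilde x_i$ and $\tilde z_i$ across the center‑plaque jumps is consistent, and that the expansion of Lemma~\ref{l.Delta-expansion} genuinely transports across those jumps. Essentially all of this was already carried out in Section~\ref{sec:coherence} — in Claim~\ref{c.projection} and in Remark~\ref{uniquelift} — so the task here is to invoke it correctly rather than to redo it. A secondary, routine point is the choice of $\mu$ small relative to $\mu_1$, $\mu_2$ and $\lambda=\|Df\|$, so that every projection distance that appears is defined and stays bounded by $\mu_1$ along the whole iteration.
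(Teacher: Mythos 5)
Your proof is sound, but it takes a genuinely different route from the paper's. You iterate \emph{forward}: you bound the unstable projection distance of the pair of lifted center leaves from below by $d^u(x_0,z_0)$, show via Lemma~\ref{l.Delta-expansion} (together with the lift transport of Claim~\ref{c.projection} and Remark~\ref{uniquelift}, which is indeed what is needed to carry $\Delta^u_{proj}$ across each center-plaque jump) that this quantity is multiplied by $\beta>1$ at every step, and contradict the uniform bound $\Delta^u_{proj}<\mu_1$ of Remark~\ref{r.proj}. The paper instead iterates \emph{backward} and avoids the holonomy-cover machinery entirely: since the pseudo-orbits preserve center plaques, $f^{-n}(x_n)$ lies in $W^c(x_0)$ and $f^{-n}(z_n)$ in $W^c(z_0)$, while $f^{-n}(z_n)\in W^u_{\alpha^n\mu}(f^{-n}(x_n))$; as two compact leaves are either disjoint (hence at positive distance) or equal, this forces $W^c(x_0)=W^c(z_0)$, and the uniform transversality of this single compact center leaf to the unstable foliation then gives $x_n=z_n$ for all large $n$; a downward induction (a local unstable leaf meets a local center plaque in at most one point) propagates the equality to every index. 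The paper's argument is shorter and more elementary, needing only compactness and transversality of center leaves rather than the projection-distance apparatus; yours has the merit of reusing Lemma~\ref{l.Delta-expansion} essentially verbatim and of being quantitative about the decay of $d^u(x_i,z_i)$, at the cost of the lift bookkeeping you correctly single out as the delicate point --- that bookkeeping does go through once $\mu$ is chosen small enough (relative to $\lambda$ and the isometry radius of the holonomy cover) that the pairs $(f(x_i),f(z_i))$ and $(x_{i+1},z_{i+1})$, which lie on the same pair of leaves, determine the same pairing of lifts.
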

The two items of the proposition are equivalent (up to changing $f$ by $f^{-1}$) 
so that we prove only the first item.

Proposition~\ref{p.plaquex} is a straightforward consequence of the next lemma:
\begin{lemma}\label{l.plaquex1}\label{l.plaquex2}
With the notation of Proposition~\ref{p.plaquex}: 
\begin{itemize}
 \item there is $N~>~0$ so that for any $n~\geq~N$ one has $x_n=z_n$.
 \item For any integer $n$, if $x_n=z_n$ then $x_{n-1}=z_{n-1}$. 
\end{itemize}
\end{lemma}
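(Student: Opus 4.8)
The plan is to prove both items of the lemma from the expansion property of the unstable projection distance (Lemma~\ref{l.Delta-expansion}) and from the local product structure (Lemma~\ref{dc_lemma2a}, Remarks~\ref{r.distance} and~\ref{r.proj}). Two preliminary observations will be used throughout. First, since the pseudo-orbits $(x_i)$ and $(z_i)$ preserve center plaques, $f(x_i)$ lies in the same center plaque, hence in the same center leaf, as $x_{i+1}$, and likewise for $(z_i)$; therefore $W^c(x_{i+1})=f(W^c(x_i))$ and $W^c(z_{i+1})=f(W^c(z_i))$ for every $i$. Second, $\De^u_{proj}(\cdot,\cdot)$ depends, for nearby arguments, only on their center leaves (this is exactly how it is used already in Section~\ref{sec:shadowing}). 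We may also assume $\mu$ small enough — shrinking it only strengthens the hypotheses of Proposition~\ref{p.plaquex} — so that $\mu<\mu_1$, $\lambda\mu<\mu_2$, and the additional smallness conditions invoked below hold; in particular $d(x_i,z_i)\le d^u(x_i,z_i)<\mu<\mu_2$, so $\De^u_{proj}(x_i,z_i)$ is defined for every $i$.

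For the first item I argue by contradiction. Suppose $x_i\neq z_i$ for some $i$. Since $z_i\in W^u_\mu(x_i)\subset W^u_{\mu_1}(x_i)$ while, by Lemma~\ref{dc_lemma2a}, $x_i$ is the \emph{only} point of $W^u_{\mu_1}(x_i)$ lying in $W^s_{\mu_1}(W^c(x_i))$, we must have $z_i\notin W^s_{\mu_1}(W^c(x_i))$, hence $W^c(z_i)\not\subset W^s_{\mu_1}(W^c(x_i))$, hence $\De^u_{proj}(x_i,z_i)>0$ by Remark~\ref{r.proj}. Using $W^c(x_{i+1})=f(W^c(x_i))$, $W^c(z_{i+1})=f(W^c(z_i))$ and the leaf-dependence of $\De^u_{proj}$, Lemma~\ref{l.Delta-expansion} applied to the pair $(x_i,z_i)$ (whose $f$-images lie at distance $\le\lambda\mu$, which the proof of that lemma accommodates for $\mu$ small) gives $\De^u_{proj}(x_{i+1},z_{i+1})\ge\beta\,\De^u_{proj}(x_i,z_i)$, and inductively $\De^u_{proj}(x_{i+k},z_{i+k})\ge\beta^k\,\De^u_{proj}(x_i,z_i)$ for all $k\ge0$. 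But Remark~\ref{r.proj} also bounds $\De^u_{proj}(x_{i+k},z_{i+k})<\mu_1$, and $\beta>1$: a contradiction. Hence $x_i=z_i$ for every $i$; in particular the first item holds, with any $N$.

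For the second item, assume $x_n=z_n$. Then $f(x_{n-1})$ and $f(z_{n-1})$ both lie in the center plaque of $x_n$, at distance $<\mu$ from $x_n$; by the uniform $C^1$-regularity of the center leaves (Remark~\ref{r.distance}) this yields $d^c(f(x_{n-1}),f(z_{n-1}))<4\mu$, and applying $f^{-1}$ we get $d^c(x_{n-1},z_{n-1})<C'\mu$ for a constant $C'$ depending only on $f$, a distance which for $\mu$ small is realized inside a local center manifold. On the other hand $d^u(x_{n-1},z_{n-1})<\mu$ since $z_{n-1}\in W^u_\mu(x_{n-1})$. Choosing $\mu$ small enough that $\max(\mu,C'\mu)$ is below the constant of the third item of Remark~\ref{r.distance}, we conclude $z_{n-1}\in W^u_{loc}(x_{n-1})\cap W^c_{loc}(x_{n-1})=\{x_{n-1}\}$, i.e. $x_{n-1}=z_{n-1}$. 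Together with the first item this gives $x_i=z_i$ for all $i\in\ZZ$, hence Proposition~\ref{p.plaquex} (and, via Lemma~\ref{l.pseudoplaque}, Theorem~\ref{plaquex}).

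The step I expect to be the main obstacle is the first item, more precisely keeping the unstable projection distance \emph{defined and subject to Lemma~\ref{l.Delta-expansion} along the entire pseudo-orbit}: one needs the leaves $W^c(x_i)$ and $W^c(z_i)$ never to escape the range where $\De^u_{proj}$ behaves well. This is exactly where the hypothesis $z_i\in W^u_\mu(x_i)$ for \emph{all} $i$ (not just $i=0$) is indispensable — it forces $d(x_i,z_i)<\mu$ for every $i$, so Remark~\ref{r.proj} furnishes the uniform bound $\De^u_{proj}(x_i,z_i)<\mu_1$ at each step, and the exponential growth produced by Lemma~\ref{l.Delta-expansion} collides with it immediately. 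The second item is then an elementary local-product computation.
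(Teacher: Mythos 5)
Your proof is correct, but the first item is established by a genuinely different argument from the paper's. The paper argues softly: pulling back to time $0$, the unstable distance $d^u(f^{-n}(x_n),f^{-n}(z_n))\leq\alpha^n\mu$ tends to $0$, so the compact leaves $W^c(x_0)$ and $W^c(z_0)$ accumulate on one another and must coincide; then, since a compact center leaf meets each sufficiently short local unstable arc through its points only at the base point, one gets $x_n=z_n$ as soon as $\alpha^n\mu$ drops below a constant $\eta$ \emph{depending on the leaf} $W^c(x_0)$ --- which is exactly why the paper's conclusion only holds for large $n$ and why the backward propagation of the second item is then needed. You instead run the $\De^u_{proj}$ machinery of Section~\ref{sec:coherence} forward: $x_i\neq z_i$ forces $\De^u_{proj}(x_i,z_i)\geq d^u(x_i,z_i)>0$ by the uniqueness of the unstable projection (Lemma~\ref{dc_lemma2a}), Lemma~\ref{l.Delta-expansion} multiplies this quantity by $\beta>1$ at each step, and the uniform bound $\De^u_{proj}<\mu_1$ of Remark~\ref{r.proj} gives an immediate contradiction; this is Corollary~\ref{c.Delta-expansion} adapted to pseudo-orbits, it is uniform in the leaf, and it yields $x_i=z_i$ for \emph{all} $i$ at once, making the second item (which you nevertheless prove, essentially as the paper does) redundant. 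Two points in your version deserve care. First, saying that $\De^u_{proj}$ ``depends only on the center leaves'' is imprecise --- the paper insists it depends on the pair of \emph{lifted} leaves and ``has no meaning for the leaves'' themselves; what you actually need, and what is true, is that the jumps $f(x_i)\to x_{i+1}$ and $f(z_i)\to z_{i+1}$ inside center plaques determine the \emph{same} pair of lifts, which is the content of Remark~\ref{uniquelift} and of the plaque-continuation argument in Claim~\ref{c.projection}, valid for $\mu$ small compared with the isometry radius $\delta_0$. Second, Lemma~\ref{l.Delta-expansion} requires $d(f(x_i),f(z_i))<\mu$ while you only have a bound by $\lambda\mu$; replacing the $\mu$ of Proposition~\ref{p.plaquex} by $\mu/\lambda$ fixes this and is harmless, since plaque expansivity only needs some positive constant (the paper's own proof of the second item makes the same kind of implicit shrinking). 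With these caveats your argument is sound and arguably closer in spirit to the rest of the paper; the paper's route buys a proof of the first item that avoids the holonomy covers entirely, at the price of a leaf-dependent threshold and the extra induction of the second item.
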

\begin{proof}[Proof of Lemma~\ref{l.plaquex1}] 
By the definition of pseudo orbits preserving the center plaques, the points $f^{-n}(x_n)$ and $f^{-n}(z_n)$ 
belong to the center leaf of $x_0$ and of $z_0$, respectively. Furthermore, as $z_n~\in~W^u_\mu(x_n)$, 
for any $n~>~0$ one gets  $f^{-n}(z_n)~\in~W^u_{\alpha^n\mu} (x_n)$ (recall that $\alpha~<~1$ is the 
contraction rate in the stable bundle for an adapted metric). In particular, their distance tends to $0$. 
As $W^c(x_0)$ and $W^c(z_0)$ are compact manifolds which are either disjoint or equal, this implies that 
$W^c(x_0)=W^c(z_0)$.

As $W^c(x_0)$ is a compact manifold transverse to the unstable foliation (indeed, in general position), this implies that there is 
$\eta$ (depending on $W^c(x_0)$) so that every local  unstable leaf $W^u_\eta(x)$, $x\in W^c(x_0)$ intersects 
$W^c(x_0)$ only at the point $x$. For $\alpha^n\mu~<~\eta$ this implies $x_n=z_n$ concluding the proof of the first 
claim of the lemma.

We now prove the second claim. The proof argues recursively. Recall that $\mu_2 > 0$ is given by Lemma~\ref{l.proj} 
and guarantees that the unstable projection is unique for points at a distance $<\mu_2$. 
Assume $x_n=z_n$.  Consider $x_{n-1}$ and $z_{n-1}~\in~W^u_{\mu}(x_{n-1})$.

By assumption, $f(x_{n-1})$ belongs to $W^c_\mu(x_n)$ and $f(z_{n-1})~\in~W^c_{\mu}(z_n)=W^c_\mu(x_n)$, 
(as $z_n=x_n$).

Furthermore, $z_{n-1}~\in~W^u_\mu(x_{n-1})$ so that $f(z_{n-1})~\in~W^u_{\lambda\mu}(f(x_{n-1}))$ where 
$\lambda$ is a bound of $\|Df\|$. For $\lambda\mu$ smaller than $\mu_2$, 
the intersection of a local unstable manifold with a local center manifold consist in at most one point,
so that $f(x_{n-1})=f(z_{n-1})$ that is $x_{n-1}=z_{n-1}$. 
\end{proof}

\begin{rem}The proof of the first claim shows indeed that $W^c(x_i)=W^c(z_i)$ for any $i$, that is, $x_i$ and $z_i$ lie in the same center leaf.  
However, it does not 
show directly that $x_i$ and $z_i$ are equal or belong to the same center plaque, i.e. the same local center leaf, explaining 
why we need the second claim.
\end{rem}

\section{Non-compactness of center unstable and center stable leaves.}\label{sec:noncompact}
Under the assumption of a uniformly compact invariant center foliation, 
we  proved the dynamical coherence (existence of center stable  
and center unstable foliations denoted by  $\mathcal{F}^{cu}$ and  $\mathcal{F}^{cs}$, respectively). 

The aim of this section is to start the topological study of these foliations: 

\begin{otherthm}
Let $f:M~\rightarrow~M$ be a partially hyperbolic $C^1$-diffeomorphism 
with an $f$-invariant uniformly compact center foliation. 
Then every leaf of the center unstable foliation $\mathcal{F}^{cu}$ is non-compact.
The same holds for every leaf of the center stable foliation $\mathcal{F}^{cs}$.
\label{theorem_noncompact} 
\end{otherthm}
This result has been announced in \cite{C11}.   We include here a detailled proof because the proof is easy and
this result is used in \cite{B12}, and \cite{C11} is still unpublished. 

\begin{lemma}\label{l.intersection}
Let $f:M~\rightarrow~M$ be a partially hyperbolic $C^1$-diffeomorphism 
with an $f$-invariant uniformly compact center foliation. Let $n$ be the maximal (finite) order of the holonomy group of center leaves. 
Then for every $y,z~\in~M$ we have $$\sharp \left\{W^s (y) \cap W^c(z)\right\}~<~n.$$ 
\end{lemma}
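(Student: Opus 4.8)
The plan is to prove the slightly stronger statement that $W^s(y)\cap W^c(z)$ contains \emph{at most one} point; since $n\ge 1$ this gives the bound of the lemma, and the strict inequality $\sharp\{W^s(y)\cap W^c(z)\}<n$ as soon as $n\ge 2$, which is the situation in which the lemma is used. If the intersection is empty there is nothing to show, so I would suppose $x_{1}\neq x_{2}$ both lie in $W^s(y)\cap W^c(z)$ and derive a contradiction. The engine of the argument is the following uniform transversality fact, which I single out as $(\star)$: there is $\mu_{*}>0$ such that $W^s_{\mu_{*}}(p)\cap W^c(p)=\{p\}$ for every $p\in M$, where $W^c(p)$ denotes the \emph{entire} center leaf through $p$.

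Granting $(\star)$, the deduction is short. Since $\cF^c$ is $f$-invariant and $f$ maps stable leaves to stable leaves, $f^{m}$ maps $W^s(y)\cap W^c(z)$ bijectively onto $W^s(f^{m}y)\cap W^c(f^{m}z)$ for every $m\ge 0$; in particular $f^{m}(x_{1})$ and $f^{m}(x_{2})$ are two points of the \emph{same} center leaf $W^c(f^{m}z)$. On the other hand $x_{1},x_{2}\in W^s(y)$, so in the adapted metric (for which $\|Df|_{E^s}\|\le\alpha<1$) one has $d^{s}(f^{m}x_{1},f^{m}x_{2})\le\alpha^{m}\bigl(d^{s}(x_{1},y)+d^{s}(x_{2},y)\bigr)\to 0$, hence also $d(f^{m}x_{1},f^{m}x_{2})\to 0$ as $m\to\infty$. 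Thus for $m$ large enough $f^{m}(x_{2})\in W^s_{\mu_{*}}(f^{m}x_{1})\cap W^c(f^{m}x_{1})$, and $(\star)$ applied at $p=f^{m}x_{1}$ forces $f^{m}x_{1}=f^{m}x_{2}$, i.e. $x_{1}=x_{2}$, a contradiction.

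It remains to prove $(\star)$. Fix $p$ and set $L=W^c(p)$ with finite holonomy group $\Gamma=\Hol(L)$. By the Generalized Reeb Stability Theorem~\ref{thm1}, $L$ has a saturated tubular neighborhood modeled on $(\widehat L\times D)/\Gamma$, where $\widehat L\to L$ is the holonomy cover, $\Gamma$ acting \emph{freely} and properly discontinuously on $\widehat L$ (as its deck group) and by the holonomy representation on the transverse disk $D$. Freeness of the $\Gamma$-action on $\widehat L$ gives a minimal displacement $\eta_{\Gamma}>0$, so that for $\mu<\eta_{\Gamma}/2$ a metric ball $B(p,\mu)$ inside this neighborhood meets $L$ exactly in the single plaque $W^c_{loc}(p)$ (a lift of the ball meets $\widehat L\times\{0\}$ in $|\Gamma|$ disjoint balls, all identified by $\Gamma$). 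Since $M$ is compact and $\cF^c$ is uniformly compact, the radii of these Reeb neighborhoods and the numbers $\eta_{\Gamma}$ admit a lower bound independent of $p$ — a limiting leaf would otherwise carry a non-free deck action, which is impossible — this being exactly the uniform control of Corollary~\ref{lemma_neighborhoods}. Finally $W^s_{\mu_{*}}(p)$ is a disk tangent to $E^s$ and $W^c_{loc}(p)$ a plaque tangent to $E^c$ with $E^s\cap E^c=\{0\}$, so, exactly as in the third item of Remark~\ref{r.distance} and after one further uniform shrinking of $\mu_{*}$, one gets $W^s_{\mu_{*}}(p)\cap W^c_{loc}(p)=\{p\}$, hence $W^s_{\mu_{*}}(p)\cap W^c(p)=\{p\}$.

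\textbf{The main obstacle} is concentrated entirely in $(\star)$, and within it the only delicate point is uniformity: guaranteeing that neither the Reeb radius, nor $\eta_{\Gamma}$, nor the transversality radius degenerates as $L$ varies. This is precisely where uniform compactness is used — finiteness of all holonomy groups, equivalently the equivalent conditions of Theorem~\ref{theorem_epstein} — and it is handled by the same compactness arguments as the earlier uniform estimates of the paper; the pointwise statements themselves are immediate from Reeb stability and from the transversality of $E^s$ and $E^c$.
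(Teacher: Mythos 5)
Your argument reduces everything to the claim $(\star)$ that $W^s_{\mu_*}(p)\cap W^c(p)=\{p\}$ for a \emph{uniform} $\mu_*>0$, where $W^c(p)$ denotes the whole center leaf, and this claim is false; consequently so is your intended conclusion that $W^s(y)\cap W^c(z)$ contains at most one point. The paper's own example in Section~\ref{sec:comments} is a counterexample. Take $M=\mathbb{T}^3/\varphi$ with $\varphi(r,s,t)=(-r,-s,t+\frac12)$ and $f$ induced by $A\times\id_{\mathbb{S}^1}$. Pick $(r,s)\neq(0,0)$ on the $A$-stable leaf of the origin, not a $2$-torsion point, at stable arc-length $\ell$ from $0$, and set $p=[(r,s,t)]$. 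Then $q=[(r,s,t-\frac12)]=[(-r,-s,t)]$ is a point of $W^c(p)$ distinct from $p$, and since $-\id$ preserves $W^s_A(0,0)$ the point $(-r,-s,t)$ lies on $W^s_A(0,0)\times\{t\}$, so $q$ lies on the stable leaf of $p$ at stable distance $2\ell$. Letting $\ell\to 0$ shows no uniform $\mu_*$ can exist, and moreover $\sharp\{W^s(p)\cap W^c(p)\}=2=n$ here, so even the global ``at most one'' statement fails. (Incidentally this also shows that the correct bound is $\leq n$ rather than $<n$, which is what the paper's proof actually delivers.)

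The precise place where your proof of $(\star)$ breaks is the uniformity sentence. The normal injectivity radius of a leaf --- equivalently the admissible thickness of its Reeb tubular neighborhood --- is \emph{not} bounded below over the foliation: a leaf $L$ with trivial holonomy that is Hausdorff-close to a leaf $L_0$ with holonomy of order $k$ covers $L_0$ with $k$ sheets, hence carries $k$ distinct plaques over each plaque of $L_0$, and these come arbitrarily close to one another as $d_H(L,L_0)\to 0$; any tubular neighborhood of $L$ must be thinner than their mutual distance. Your appeal to Corollary~\ref{lemma_neighborhoods} does not help here: that corollary controls saturations of small balls in the Hausdorff metric on the leaf space, not the distance between distinct plaques of one and the same leaf, and the ``limiting leaf'' in a degenerating sequence is simply a leaf with nontrivial holonomy, which is perfectly allowed. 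This loss of uniformity is exactly the phenomenon the whole paper is organized around (hence the holonomy covers and the modified distance $\Delta_H$). The paper's proof performs your local step in a holonomy cover, where the lifted local stable manifold meets each connected component of $p_i^{-1}(W^c(z))$ in at most one point and there are at most $n$ such components --- whence the bound $n$, not $1$ --- and then an iteration argument like your (correct) second paragraph upgrades the local bound to the global one.
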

\begin{proof}
Notice that any local stable  manifold of size $\mu$ admits a lift in a holonomy cover $\left\{U_i, p_i\right\}$ 
and the lifted local stable manifold intersects each lifted center leaf in at most one point.  
The preimage by $p_i$ of any center manifold consists of at most $\max \left|\Hol\right| = \max_{L \in \cF^c}\left|\Hol(L)\right|$ points:
this implies that for any 
$y,z \in M$ we have $$\sharp \left\{W^s_\mu (y) \cap W^c(z)\right\}  \leq n=\max \left|\Hol\right|.$$ 

Assume now, arguing by contradiction,  that there is $y,z$ so that the whole stable manifold $W^s(y)$ cuts $W^c(z)$ in 
strictly more than $n$ points.  Then there is $r>0$ so that the stable manifold $W^s_r(y)$ of size $r$  
cuts $W^c(z)$ in more than $n$ points.  Choose $i~>~0$ so that $\alpha^ir~<~\mu$, 
where $\alpha$ is the contraction rate of the stable bundle.

Then $W^s_{\alpha^ir}(f^i(y))~\subset~W^s_\mu(f^i(y))$ cuts $W^c(f^i(z))$ in more than $n$ points, 
leading to a contradiction. 
  
\end{proof}

For the proof of Theorem~\ref{theorem_noncompact} we notice the following fact: 

\begin{lemma}\label{l.noncompact}
Let $V$ be a $C^1$-compact manifold admitting a continuous splitting $TV=F\oplus G$ in transverse subbundles.
Assume that there are 
$C^0$-foliations $\mathcal{F}$ and  $\mathcal{G}$ with $C^1$-leaves tangent to $F$ and $G$, respectively. 
Then for any non-compact leaf $L_G$ of $\mathcal{G}$, there is a leaf $L_F$ of $\mathcal{F}$ so that 
the intersection
$L_F\cap L_G$ is infinite. 
\end{lemma}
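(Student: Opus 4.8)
\emph{Proof sketch.}

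The plan is to use the compactness of $V$ to force the non-compact leaf $L_G$ to fail to be closed, producing an accumulation point of $L_G$ outside $L_G$, and then to observe that in a foliated chart of $\mathcal{G}$ around that point the leaf $L_G$ appears as infinitely many stacked plaques, all of which get crossed by a single plaque of $\mathcal{F}$ through the accumulation point. First I would note that $\overline{L_G}$ is a closed subset of the compact manifold $V$, hence compact; if $L_G$ were closed it would coincide with $\overline{L_G}$ and be compact, contradicting the hypothesis. So there is a point $p\in\overline{L_G}\setminus L_G$ and a sequence $q_n\in L_G$ with $q_n\to p$.

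Next I would fix a foliated chart $\phi\colon U\to B^{g}\times B^{f}$ for $\mathcal{G}$ around $p$, where $g=\dim G$, $f=\dim F$, $\phi(p)=(0,0)$, and the plaques of $\mathcal{G}$ in $U$ are the sets $\phi^{-1}(B^{g}\times\{t\})$, $t\in B^{f}$. Let $\pi\colon B^{g}\times B^{f}\to B^{f}$ be the projection and $\bar\pi=\pi\circ\phi\colon U\to B^{f}$, so that the fibers of $\bar\pi$ are exactly the $\mathcal{G}$-plaques in $U$. Writing $t_n=\bar\pi(q_n)$ one has $t_n\to 0$. Moreover $t_n\neq 0$ for all large $n$: otherwise $q_n$ lies on the plaque through $p$, forcing $p$ into the leaf $L_G$, a contradiction. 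And the $t_n$ take infinitely many distinct values, since a finite set of nonzero numbers cannot converge to $0$; passing to a subsequence we may assume the $t_n$ pairwise distinct and nonzero. For each such $n$ the fiber $\bar\pi^{-1}(t_n)$ is a single plaque of $\mathcal{G}$ contained in $L_G$ (it is a connected subset of the leaf meeting it at $q_n$).

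Now let $L_F$ be the leaf of $\mathcal{F}$ through $p$ and let $Q\subset U\cap L_F$ be a small plaque of $\mathcal{F}$ containing $p$. Since $TL_F=F$ is transverse to $G=T\mathcal{G}$ with $F\oplus G=TV$, a plaque of $\mathcal{F}$ meets a plaque of $\mathcal{G}$ in at most one point once both are small enough: this is the uniform transversality already used above (Lemma~\ref{l.diskfield} and Remark~\ref{r.distance}), which holds here because the leaves are uniformly $C^1$ and $V$ is compact. Hence, after shrinking $Q$, the restriction $\bar\pi|_Q\colon Q\to B^f$ is injective; by invariance of domain its image $\bar\pi(Q)$ is an open neighbourhood of $0=\bar\pi(p)$ in $B^f$. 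Consequently, for all large $n$ we have $t_n\in\bar\pi(Q)$, so there is a point $q_n'\in Q$ with $\bar\pi(q_n')=t_n$; then $q_n'\in\bar\pi^{-1}(t_n)\subset L_G$ and $q_n'\in Q\subset L_F$. Since the $t_n$ are pairwise distinct, the points $q_n'$ are pairwise distinct, so $L_F\cap L_G$ is infinite.

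The step I expect to be the main obstacle is the claim that small plaques of $\mathcal{F}$ and $\mathcal{G}$ intersect in at most one point (equivalently, the injectivity of $\bar\pi|_Q$): for foliations tangent only to continuous bundles this is not a formal triviality, but it is precisely the uniform transversality already established in this paper, so it can be invoked rather than reproved. Everything else is a pigeonhole/accumulation argument together with invariance of domain.
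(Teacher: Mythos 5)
Your argument is correct and follows essentially the same route as the paper's (much terser) proof: locate a point at which infinitely many distinct plaques of $L_G$ accumulate, and use the transversality of $\mathcal{F}$ and $\mathcal{G}$ (injectivity of $\bar\pi$ on a small $\mathcal{F}$-plaque plus invariance of domain) to show that the $\mathcal{F}$-plaque through that point crosses infinitely many of these $\mathcal{G}$-plaques. The only step you assert without justification is that a leaf which is a closed subset of a compact manifold is a compact leaf (compactness of $L_G$ as a subset does not formally give compactness in the leaf topology); this is a true but genuinely non-trivial fact of foliation theory, and you can bypass it entirely by taking a sequence $q_n\in L_G$ with no convergent subsequence in the leaf topology, extracting a limit $p$ in $V$, and observing that the $q_n$ must then lie in infinitely many distinct plaques of the chart at $p$ -- after which your argument with the distinct values $t_n=\bar\pi(q_n)$ runs verbatim whether or not $p\in L_G$.
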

\begin{proof} As $L_G$ is assumed to be non-compact and $V$ is compact, there is a point $x~\in~V$
which is contained in the limit of an infinite sequence of plaques of $L_G$. Therefore, 
the leaf $L_F(x)$ cuts $L_G$ in infinitely many points.
\end{proof}
Now we can prove Theorem~\ref{theorem_noncompact} straightforwardly: 
\begin{proof}[Proof of Theorem~\ref{theorem_noncompact}]
Assume that there exists a compact leaf $W^{cs}\in \cF^{cs}$. Therefore, we can apply Lemma~\ref{l.noncompact} to $TW^{cs}=E^{c}|_{W^{cs}}\oplus E^s|_{W^{cs}}$. 
Every stable leaf $W^s(z)$ for $z \in W^{cs}$ is non-compact. 
Accordingly, for any leaf $W^s(z)$ there exists a center leaf 
$W^c\in W^{cs}$ which intersects $W^{s}(z)$ infinitely many times contradicting 
Lemma~\ref{l.intersection}. Hence, every center stable leaf is non-compact. 
A similar argument proves that every center unstable leaf is non-compact, finishing the proof. 
\end{proof}

\section{Comments, examples and open questions}\label{sec:comments}

In this section we first present a very simple example from \cite[Section 4.1.3]{BoW05} which illustrates many of the 
pathological behavior one can get for the dynamics on the center leaf space. Then we will ask some questions on the existence of periodic center leaves and on the dynamical characterization of the stable manifolds - in the light provided by this example.

\subsection{Non-expansivity and infinite non-uniqueness of the shadowing}\label{s.nonexpansive}

Let us come back to an example presented in \cite[Section 4.1.3]{BoW05}.
\begin{itemize}
 \item Let $A~\in~SL(2,\ZZ)$ be an Anosov matrix, 
considered as a diffeomorphism of the torus $\mathbb{T}^2=\mathbb{R}^2/\mathbb{Z}^2$.  
\item Let $M$ be the compact $3$-manifold obtained as the quotient of
$\mathbb{T}^3=\mathbb{T}^2\times \mathbb{S}^1$ by the free involution 
$\varphi\colon(r,s,t)~\mapsto~(-r,-s,t+\frac12)$.
\item The involution $\varphi$ commutes with the diffeomorphism $\tilde f= A\times \id_{\mathbb{S}^1}$ so that
$\tilde f$  passes to the quotient as a diffeomorphism $f$ of $M$.
\end{itemize}
The diffeomorphism $f$ is partially hyperbolic, its center foliation is a circle 
foliation which is a Seifert bundle over the sphere $S^2$, with $4$ exceptional 
leaves which have a non-trivial holonomy, which is indeed generated by $-\id$. 

The space of center leaves is therefore the sphere $S^2$ naturally endowed with an 
orbifold structure with $4$ singular points. The stable and unstable foliations of $f$ induce on the quotient
transverse singular foliations, with exactly one singularity at each singular point of the orbifold. 
These singularities are \emph{one prong singularities of generalized pseudo Anosov type}. 

This implies that for every $\varepsilon~>~0$, for every center leaf $W^c$ close to one of 
the exceptional fiber, $W^s_{\varepsilon}(W^c)\cap W^u_{\varepsilon}(W^c)$ consist in the union of 
$W^c$ and another center leaf $W^c_1$, see Figure~\ref{fig:example}.  

As a consequence, for every $n~\in~\ZZ$ the distance $d_H(f^n(W^c),f^n(W^c_1))$ is bounded by $\varepsilon$.  

\begin{rem}
 The argument above shows that the quotient dynamics $A/-\id$ induced by $f$ 
on the center leaves spaces $S^2$ is not expansive. This non-expansivity is indeed expected as there are no expansive 
homeomorphism one the $2$-sphere (see \cite{L89}).
\end{rem}
\begin{figure}
\includegraphics[width=0.5\textwidth]{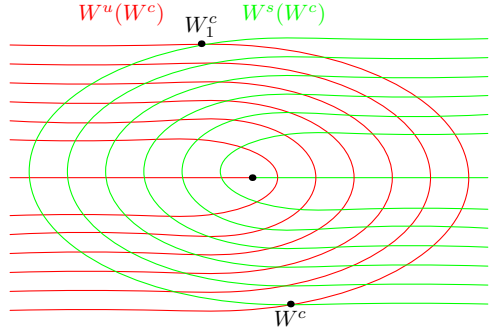}
\caption{Intersection of local stable and unstable leaves in a neighborhood of a one prong singularity on the quotient space of the center leaves.}
\label{fig:example}
\end{figure}
Let us investigate a little bit further this lack of expansivity: 

The Anosov diffeomorphism $A$ admits a dense subset of periodic points.  
One deduces that there are periodic center leaves 
$W^c$ which are arbitrarily close to an exceptional 
leaf. In other words, one may assume that, in the comment above, $W^c$ 
is a periodic center leaf, and 
$W^c_1$ is a homoclinic intersection associated to $W^c$. 

One deduces that the maximal invariant set of any small neighborhood of 
$W^c\cup W^c_1$ contains an 
invariant compact set, saturated by the center foliation, and which 
projects onto $S^2$ as 
a non-trivial hyperbolic basic set of $A/-\id$. This shows that

\begin{lemma}
For any $\varepsilon~>~0$ there is a invariant compact set $K$ saturated for the center foliations, 
homeomorphic to the product of a Cantor set by $S^1$, and so that, for any leaves $W_1,W_2~\subset~K$ one has 
$d_H(f^n(W_1),f^n(W_2))~<~\varepsilon$.
\end{lemma}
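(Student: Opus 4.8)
The plan is to realize $K$ through the classical Birkhoff--Smale horseshoe construction applied to a transverse homoclinic orbit of the quotient map $F=A/-\mathrm{id}$ on $S^2$. First I would fix $\varepsilon>0$ and, using that $A$ has a dense set of periodic points, pick a periodic point of $A$ so close to one of the four $2$-torsion points of $\mathbb{T}^2$ that the corresponding (periodic) center leaf $W^c$ of $f$ lies $\varepsilon$-close, in the Hausdorff distance, to one of the exceptional leaves. By the observation made just before the statement, $W^s_\varepsilon(W^c)\cap W^u_\varepsilon(W^c)=W^c\cup W^c_1$ for a second center leaf $W^c_1$ sitting in the local center stable and center unstable manifolds of $W^c$ of size $\varepsilon$; normal hyperbolicity (contraction along $E^s$ for forward iterates, along $E^u$ for backward iterates) then yields $d_H(f^n(W^c),f^n(W^c_1))\le\varepsilon$ for every $n\in\mathbb{Z}$, so the $f$-orbit of $W^c_1$ is trapped in the $\varepsilon$-neighborhood of the finite $f$-orbit of $W^c$. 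Projecting to $M/\mathcal{F}^c=S^2$, the leaf $W^c$ becomes a periodic point $p$ of $F$ and $W^c_1$ a homoclinic point $q$ of $p$; the intersection at $q$ is transverse since $q$ is a regular point of the orbifold, where the images of $\mathcal{F}^s$ and $\mathcal{F}^u$ under the projection are genuine transverse foliations.

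Next I would run the Birkhoff--Smale construction on the transverse homoclinic orbit of $p$ through $q$. To avoid dealing with the one-prong pseudo-Anosov singularities directly, it is convenient to lift the picture through the branched double cover $\mathbb{T}^2\to S^2$ near the $2$-torsion point in question: there $F$ is simply $A$ near a hyperbolic periodic point with a transverse homoclinic point, the horseshoe construction applies verbatim, and, symmetrizing with respect to $-\mathrm{id}$, the resulting basic set descends to $S^2$. In this way one obtains a non-trivial hyperbolic basic set $\Lambda\subset S^2$ -- concretely, the maximal invariant set of a small enough neighborhood of the closure of the homoclinic orbit -- which is $F$-invariant, compact, topologically conjugate to a subshift of finite type (hence a Cantor set), contained in the $\varepsilon$-neighborhood of the orbit of $p$; and, since the closure of that homoclinic orbit misses the four singular points of the orbifold, one may choose the neighborhood so that $\Lambda$ is disjoint from all four of them.

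Then I would set $K:=\pi^{-1}(\Lambda)$. It is $f$-invariant, compact and $\mathcal{F}^c$-saturated. Because $\Lambda$ avoids the exceptional points, each center leaf over $\Lambda$ is a circle with trivial holonomy, so by Generalized Reeb Stability (Theorem~\ref{thm1}) the restriction $\pi|_K\colon K\to\Lambda$ is a locally trivial $\mathbb{S}^1$-bundle over the Cantor set $\Lambda$; a finite clopen partition of $\Lambda$ over each piece of which the bundle is trivial shows $K\cong\Lambda\times\mathbb{S}^1$, i.e. the product of a Cantor set with $\mathbb{S}^1$. Finally, since every center leaf contained in $K$ has its whole orbit projecting into $\Lambda$, hence staying in the $\varepsilon$-neighborhood of the orbit of $W^c$, the triangle inequality together with $d_H(f^n(W^c),f^n(W^c_1))\le\varepsilon$ yields the asserted proximity $d_H(f^n(W_1),f^n(W_2))<\varepsilon$ for all center leaves $W_1,W_2\subset K$ and all $n$, once the construction is carried out with a suitably smaller constant at the outset.

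The step I expect to be the main obstacle is precisely this last bookkeeping: arranging that the Birkhoff--Smale horseshoe is at the same time genuinely $f$-invariant and confined, along its entire orbit, to a $d_H$-neighborhood of size $\varepsilon$ of the periodic leaf $W^c$. This is exactly where the fact that $W^c_1$ lies in the \emph{local} center (un)stable manifolds of $W^c$ of size $\varepsilon$ is essential -- it confines the homoclinic excursion, and therefore the whole basic set, to the $\varepsilon$-neighborhood of the finite orbit of $W^c$ -- together with the standard fact that the Birkhoff--Smale basic set can be localized inside any prescribed neighborhood of that excursion.
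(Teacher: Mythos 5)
Your overall strategy (a periodic leaf near an exceptional fibre, the second intersection leaf $W^c_1$ as a transverse homoclinic point of the quotient map, a Birkhoff--Smale horseshoe $\Lambda$, and $K=\pi^{-1}(\Lambda)$ trivialized as Cantor set $\times\,\mathbb{S}^1$ via Reeb stability) is exactly the one the paper sketches, and those parts are fine. The gap is in the final step, precisely where you locate it, and the resolution you propose does not close it. You argue that every leaf of $K$ stays at all times in the $\varepsilon$-neighbourhood of the \emph{orbit} of $W^c$ and then apply the triangle inequality. But the orbit of $W^c$ is a finite set of $k$ leaves ($k$ the period), and the triangle inequality gives $d_H(f^n(W_1),f^n(W_2))<\varepsilon$ only if $f^n(W_1)$ and $f^n(W_2)$ are close to the \emph{same} leaf of that orbit at each time $n$, i.e. only if all leaves of $K$ are synchronized in phase with the periodic orbit. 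They are not: the $f$-invariant set $K$ you build contains the whole orbit $W^c, f(W^c),\dots,f^{k-1}(W^c)$, and a periodic point of $A$ lying $\varepsilon$-close to $0$ has, for small $\varepsilon$, a long orbit whose diameter in $\mathbb{T}^2$ (hence whose Hausdorff diameter in the leaf space) is bounded below independently of $\varepsilon$. So the asserted inequality already fails for the pair $(W_1,W_2)=(W^c,f(W^c))\subset K$.

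This is not mere bookkeeping: if $K$ is genuinely $f$-invariant and the conclusion holds for all pairs, then taking $W_2=f^m(W_1)$ shows that every orbit of $\pi(K)$ in $S^2$ has diameter at most $\varepsilon$, and for $\varepsilon$ below the expansivity constant of $A$ this confines $\pi(K)$ to finitely many fixed points of $A/-\mathrm{id}$ --- incompatible with a Cantor set. The construction therefore has to be modified along the lines of the proof of Proposition~\ref{p.example}: one works with the single projected pseudo-orbit $\Gamma$ of $\bar p=\pi(W^c)$ and takes for $\pi(K)$ the set of all points $z_\mu$ whose orbits shadow $\Gamma$ \emph{at the corresponding times} (the sign itineraries $\mu$ being allowed to change only when the orbit of $p^+$ returns near $0$). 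All these orbits are then mutually $2\delta$-close at every time, and the set is a Cantor set, but it is only invariant under $f^N$ for $N$ a multiple of the period (equivalently, it is one ``phase class'' of your horseshoe). You must either state the lemma for an iterate $f^N$, or restrict the conclusion to pairs of leaves in the same phase class; as written, your final triangle-inequality step would fail.
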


\subsection{Dynamical classification of invariant manifolds}

For hyperbolic systems, the set of points whose positive orbits remain at a distance less 
than $\varepsilon~>~0$ of the orbit of $x$ is the local stable manifold of $x$.

This characterization is no more true for the dynamics induced on the center leaf space: 
in the example above, for any  $\varepsilon~>~0$, there are center leaves $W_1,W_2$ whose positive 
(and indeed negative) orbits remains at a distance less than $\varepsilon$ but which are not in the 
same stable manifold. 

\begin{prop}\label{p.example}
 Given an Anosov automorphism $A$ of 
 $\mathbb{T}^2$ the quotient $\varphi=A/-\id$  admits 
 points $x,y$ so that the distance $d(\varphi^n(x),\varphi^n(y))$ 
 tends to $0$ when $n~\to~\pm\infty$, 
 but the lifts of $x$ and $y$ on $\mathbb{T}^2$ do neither lie in the 
 same stable nor unstable manifold.
\end{prop}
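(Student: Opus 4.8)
The plan is to exhibit $x$ and $y$ directly in the branched double cover $\pi\colon\mathbb{T}^2\to S^2:=\mathbb{T}^2/\langle-\id\rangle$, using three elementary facts: $\varphi\circ\pi=\pi\circ A$; the involution $-\id$ is an isometry of the flat torus, so that $d_{S^2}(\pi(a),\pi(b))=\min\{d_{\mathbb{T}^2}(a-b,0),\,d_{\mathbb{T}^2}(a+b,0)\}$; and $W^s_A(0)=\pi(E^s)$ and $W^u_A(0)=\pi(E^u)$ are subgroups of $\mathbb{T}^2$, whose union has zero Lebesgue measure while their intersection $W^s_A(0)\cap W^u_A(0)$ (the points homoclinic to $0$) is infinite — indeed it is $\pi\bigl(E^s\cap(E^u+\mathbb{Z}^2)\bigr)$, a rank‑two subgroup, since for every nonzero $m\in\mathbb{Z}^2$ the affine line $E^u+m$ meets $E^s$ in a single point which is not in $\mathbb{Z}^2$ (as $E^s,E^u$ are irrational).

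First I would fix a nonzero homoclinic point $v\in\bigl(W^s_A(0)\cap W^u_A(0)\bigr)\setminus\{0\}$ and a point $u\in\mathbb{T}^2\setminus\bigl(W^s_A(0)\cup W^u_A(0)\bigr)$ (so in particular $u\neq 0$). Then I would choose $p\in\mathbb{T}^2$ with $2p=u+v$ and set $p':=p-u$, so that $p-p'=u$ and $p+p'=v$; finally I would put $x:=\pi(p)$ and $y:=\pi(p')$. Since $u\neq 0$ and $v\neq 0$, we have $\pi(p)\neq\pi(p')$, i.e. $x\neq y$.

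The two required properties are then checked directly. For the convergence: for every $n\in\mathbb{Z}$ the point $-A^np'$ is a lift of $\varphi^n(y)$, hence
$$d_{S^2}(\varphi^n(x),\varphi^n(y)) \le d_{\mathbb{T}^2}(A^np+A^np',0) = d_{\mathbb{T}^2}(A^nv,0),$$
and the right‑hand side tends to $0$ as $n\to+\infty$ because $v\in W^s_A(0)$, and as $n\to-\infty$ because $v\in W^u_A(0)$. For the second property: the lift $p$ of $x$ and the lift $p'$ of $y$ satisfy $p'-p=-u\notin W^s_A(0)\cup W^u_A(0)$ — here one uses that $W^s_A(0)$ and $W^u_A(0)$ are subgroups — hence $p'\notin W^s_A(p)=p+W^s_A(0)$ and $p'\notin W^u_A(p)=p+W^u_A(0)$. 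Thus $x$ and $y$ admit lifts on $\mathbb{T}^2$ lying neither in a common stable manifold nor in a common unstable manifold of $A$, which is the statement of the proposition.

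I do not expect a genuine obstacle: the argument is short once the right gadget is identified. The single point that must be respected is that $v$ has to be \emph{homoclinic} to $0$, not merely forward‑asymptotic: homoclinicity makes the one quantity $d_{\mathbb{T}^2}(A^nv,0)$ control the orbits in \emph{both} time directions, which is precisely what leaves $u=p-p'$ free to be chosen outside $W^s_A(0)\cup W^u_A(0)$. If instead one split the two time directions between $W^s_A(0)$ and $W^u_A(0)$ (taking $v\in W^s_A(0)$ to handle $n\to+\infty$ and $u\in W^u_A(0)$ to handle $n\to-\infty$), convergence would still hold but every pair of lifts of $x$ and $y$ — whose mutual differences run over $\pm u,\pm v$ — would lie in a common stable or unstable manifold of $A$, and the conclusion would fail.
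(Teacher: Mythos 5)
Your construction does not establish the proposition in the form the paper needs, and in fact your example violates it. The conclusion quantifies over \emph{all} lifts: for every lift $\tilde x$ of $x$ and every lift $\tilde y$ of $y$, the pair must lie neither in a common stable nor in a common unstable manifold of $A$. This universal reading is forced both by the paper's own argument (which bounds $\inf\{d(A^i(x_\mu),A^i(x_\nu)),\,d(A^i(-x_\mu),A^i(x_\nu))\}$ away from zero, i.e.\ checks both essentially distinct pairs of lifts) and by the use made of the statement: Corollary~\ref{c.example} and the discussion in the introduction require the center leaf $W^c(y)$ to be \emph{disjoint} from $\bigcup_{z\in W^c(x)}W^s(z)$, which downstairs means that \emph{no} lift of $y$ lies in the stable manifold of a lift of $x$. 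Your points $x=\pi(p)$, $y=\pi(p')$ have lifts $\{p,-p\}$ and $\{p',-p'\}$, whose mutual differences are $\pm u$ and $\pm v$. You verify only the pair $(p,p')$, with generic difference $u$; the pair $(p,-p')$ differs by $p+p'=v$, which you chose to be a nonzero \emph{homoclinic} point, i.e.\ $v\in W^s_A(0)\cap W^u_A(0)$. Since $W^s_A(q)=q+W^s_A(0)$ and $W^s_A(0),W^u_A(0)$ are subgroups, this gives $-p'\in W^s_A(p)\cap W^u_A(p)$: one pair of lifts lies simultaneously in a common stable and a common unstable manifold. So the asymptoticity $d(\varphi^n(x),\varphi^n(y))\to 0$ in your example is entirely explained by a genuine homoclinic relation upstairs --- exactly the trivial mechanism the proposition is meant to exclude --- and for the associated center leaves one gets $W^c(y)\subset W^s(W^c(x))$, so the corollary would fail. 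You come within a hair of seeing this in your closing paragraph, where you note that the differences of lifts run over $\pm u,\pm v$ and that the conclusion fails as soon as one of them lies in $W^s_A(0)\cup W^u_A(0)$; that observation applies to your own $v$.

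The defect is not reparable by a smarter choice of lifts; it requires a different $v$. In the notation $u=p-p'$, $v=p+p'$, one needs \emph{both} $u$ and $v$ outside $W^s_A(0)\cup W^u_A(0)$ while $\min\{d(A^nu,0),\,d(A^nv,0)\}\to 0$ as $n\to\pm\infty$. Since neither $A^nu$ nor $A^nv$ can then tend to $0$ on its own, the two orbits must cover all large times between them, handing off to each other infinitely often in each time direction while each returns near $0$ without converging to it. Manufacturing such a pair is precisely what the paper's proof does via the Shadowing Lemma: it shadows two pseudo-orbits of $A$ that project to the same pseudo-orbit of $\varphi$ but differ by the sign flip $-\id$ on infinitely many blocks in both time directions. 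That dynamical step is the real content of the proposition, and it is the step your elementary algebraic gadget skips.
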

As a direct corollary one gets:
\begin{corol}\label{c.example} Let $f$ be the partially hyperbolic diffeomorphisms associated to  an
Anosov automorphism $A$ of 
 $\mathbb{T}^2$ in Section~\ref{s.nonexpansive}.
Then, there exist leaves $W_1$, $W_2$ 
 with $d_H(f^n(W_1) f^n(W_2))$ tends to $0$ for $n~\rightarrow~\infty$, but
 $W_2~\nsubseteq~W^s(W_1)$. 
 \end{corol}

\begin{proof}[Proof of Proposition~\ref{p.example}] Fix some $\delta~>~0$ and $\varepsilon~>~0$ 
so that every pseudo orbit of the Anosov map $A$ 
is  uniquely $\delta$-shadowed by a true orbit of $A$. 
We will use the following property which comes directly from
the proof of the Shadowing  
Lemma: any $\delta$-pseudo orbit $(x_i)_{i\in\mathbb{Z}}$ whose 
jumps tend to $0$ 
(that is $d(f(x_i),x_{i+1})~\to~0$ for $|i|~\to~\infty$) is uniquely $\delta$-shadowed 
by the orbit of a point $x$ with 
$d(f^i(x),x_i)$ tending to $0$ with $|i|~\to~\infty$.

Consider a sequence of periodic points $p^+_n$, $n~\in~\ZZ$ 
tending to $0$ as $|n|~\to~\infty$ and so that $d(p^+_n, 0)~<~\varepsilon$. 
Denote $p^-_n=-\id(p^+_n)$: then $p^+_n$ and $p^-_n$ have the 
same period $\pi_n$, tending to $\infty$ as $|n|\to \infty$. 
Let us denote by 
$\gamma^\pm_n$ the orbit of $p^\pm_n$, with initial point at $p^\pm_n$.  
Notice that, for every $n,m$ one has $d(p^\pm_n,p^\pm_m)~<~\varepsilon$ 
and $d(p^\pm_n,p^\pm_m)~\to~0$ as $|n|,|m|~\to~\infty$.

For any sequence $\mu=\{\mu_i\in\{+,-\}, i\in\ZZ\}$  we consider the  
infinite sequence $\Gamma_\mu$ 
$$  \dots p^{\mu_{-1}}_{-1},\dots, A^{\pi_{-1}}(p^{\mu_{-1}}_{-1}), 
p^{\mu_0}_0,\dots, A^{\pi_0}(p^{\mu_0}_0), p^{\mu_1}_1,\dots, 
A^{\pi_1}(p^{\mu_1}_1)\dots $$
obtained by concatenation of the $\gamma^{\mu_n}_n$.

Then each of the $\Gamma_\mu= (\Ga_{\mu,i})_{i\in\ZZ}$ is a $\delta$-pseudo orbit whose jumps tends to $0$. 
Therefore, each ot the $\Gamma_\mu$ is $\delta$-shadowed by the orbit of a point 
$x_\mu$, and $d(f^i(x_\mu),\Gamma_{\mu,i})~\to~0$ with $i$.

Now the orbits of $p^+_n$ and of $p^-_n$ project onto the same orbit of $\varphi=A/-id$. 
Therefore all the pseudo orbits $\left\{\Gamma_\mu\,|\, \mu\in \{+,-\}^{\mathbb{Z}}\right\}$ of $A$ 
project onto the same pseudo orbit $\Gamma$ of $\varphi$.  Let $z_\mu$ be the 
projection of $x_\mu$.  Thus, $\Gamma$ is $\delta$-shadowed by the orbit of 
$z_\mu$ and $d(\varphi^i(z_\mu),\Gamma_i)~\to~0$ as $|i|~\to~\infty$. 

Consider now $\mu$, $\nu~\in~\{+,-\}^{\ZZ}$ so that both sets $\{ i | \nu_i=\mu_i\}$ 
and $\{j | \nu_j=-\mu_j\}$ are infinite and neither upper nor lower bounded.
For every $i$, denote 
$$d_i=\inf\{d(A^i(x_\mu), A^i(x_\nu)), d(A^i(-x_\mu), A^i(x_\nu))\}.$$ 
Our choice of $\mu$ and $\nu$ implies that $d_i$ does neither tends to $0$ as $i~\to~+\infty$ 
nor as $i\to - \infty$. Therefore, $x_\nu$ is neither in the stable nor 
in the unstable  manifold of  neither $x_\mu$ nor $-x_\mu$. 

Thus the orbits of $z_\mu$ and $z_\nu$ are positively and negatively asymptotic 
but are not in the same stable nor unstable leaf, ending the proof.

\end{proof}
\begin{rem} The proof of Proposition~\ref{p.example} gives a little bit more: given $x$ 
(corresponding in the proof to
the point $x_\mu$), the  points $y=x_\nu$ satisfying the conclusion of Proposition~\ref{p.example} are 
indeed uncountably many, corrsponding to all possible choices of $\nu$. 
 
\end{rem}

\subsection{Unique intergrability}

We have been unable to prove the uniqueness of the center foliation even with very strong hypotheses:
\begin{question}
 Let $f$ be a partially hyperbolic diffeomorphism admitting invariant uniformly compact center 
 foliations $\cW^1$ and $\cW^2$ (tangent to the same center-bundle). Are they equal? 
\end{question}

\bibliographystyle{amsalpha}
\bibliography{bibfile}
\end{document}